\title{The half plane UIPT is recurrent}
\author{Omer Angel \and Gourab Ray}
\date{January 2016}  
\crefname{theorem}{Theorem}{Theorems}
\crefname{mainthm}{Theorem}{Theorems}
\crefname{thm}{Theorem}{Theorems}
\crefname{lemma}{Lemma}{Lemmas}
\crefname{lem}{Lemma}{Lemmas}
\crefname{remark}{Remark}{Remarks}
\crefname{prop}{Proposition}{Propositions}
\crefname{defn}{Definition}{Definitions}
\crefname{corollary}{Corollary}{Corollaries}
\crefname{conjecture}{Conjecture}{Conjectures}
\crefname{question}{Question}{Questions}
\crefname{chapter}{Chapter}{Chapters}
\crefname{claim}{Claim}{Claims}
\crefname{section}{Section}{Sections}
\crefname{figure}{Figure}{Figures}
\theoremstyle{plain}
\newtheorem{mainthm}{Theorem}
\newtheorem{thm}{Theorem}[section]
\newtheorem{lemma}[thm]{Lemma}
\newtheorem{corollary}[thm]{Corollary}
\newtheorem{prop}[thm]{Proposition}
\theoremstyle{definition}
\theoremstyle{remark}
\numberwithin{equation}{section}
\renewcommand{\P}{\mathbb P}
\newcommand{\Z}{\mathbb Z}
\newcommand{\E}{\mathbb E}
\newcommand{\R}{\mathbb R}
\newcommand{\eps}{\varepsilon}
\renewcommand{\H}{\mathbb H}
\newcommand{\Bb}{\mathbb B^{\text{bias}}}
\newcommand{\B}{\mathbb B}
\newcommand{\Sk}{\text{Skel}}
\newcommand{\cF}{\mathcal F}
\newcommand{\cC}{{\mathcal C}}
\newcommand{\Res}{R_{\mathrm{eff}}}
\begin{document}

\maketitle

\begin{abstract}
  We prove that the half plane version of the uniform infinite planar
  triangulation (UIPT) is recurrent.

  The key ingredients of the proof are a construction of a new full plane
  extension of the half plane UIPT, based on a natural decomposition of the
  half plane UIPT into independent layers, and an extension of previous
  methods for proving recurrence of weak local limits (still using circle
  packings).
\end{abstract}

\section{Introduction}

The \textbf{half plane uniform infinite planar triangulation}, abbreviated
as the \textbf{HUIPT} below, is a random planar triangulation, closely
related to the well-known and extensively studied \textbf{uniform infinite
  planar triangulations} (\textbf{UIPT}), but with the topology of the half
plane.  The HUIPT is an interesting object in its own right, and in some
ways is nicer than the UIPT.  For example, it possesses a simpler form of
the domain Markov property (detailed definition are provided in
\cref{sec:background}).  The problem of establishing the recurrence of the
UIPT had been open for many years.  This was a motivation for the seminal
work of Benjamini and Schramm \cite{BeSc}, and was resolved in recent work
of Gurel-Gurevich and Nachmias \cite{GN12}.  However, recurrence of the
HUIPT does not follow from their work, as it is not known if it is possible
to realize the HUIPT as a subgraph of the UIPT (indeed, there are possible
arguments that it is not possible to have such a coupling).  In this
article we establish the recurrence of the half-plane UIPT.

\begin{mainthm}\label{thm:main}
  The simple random walk on the half plane uniform infinite planar
  triangulation is almost-surely recurrent.
\end{mainthm}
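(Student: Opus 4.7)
The strategy is to bypass the fact that no coupling of the HUIPT as a subgraph of the UIPT is known, by constructing a \emph{new} full plane random triangulation $\mathbf M$ whose restriction to an appropriate half plane is the HUIPT, and then proving recurrence of $\mathbf M$ by adapting the weak-limit, circle-packing method of Benjamini--Schramm and Gurel-Gurevich--Nachmias \cite{BeSc,GN12}. Recurrence of the HUIPT will then transfer from $\mathbf M$ by a Dirichlet energy comparison, since the HUIPT sits inside $\mathbf M$ as an induced subgraph.

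The first, and technically hardest, step is to peel the HUIPT inward from its boundary into successive layers $L_1, L_2, \ldots$, where $L_r$ consists of the faces at combinatorial distance exactly $r$ from the boundary (equivalently, $L_r = H_r \setminus H_{r-1}$ for the hulls $H_r$). Iterating the one-step domain Markov property of the HUIPT together with translation invariance along its boundary, I expect the sequence $(L_r)$ to be, in an appropriate parametrization, an i.i.d.\ sequence of ``layer'' triangulations with an explicit law. The delicate point is that the inner boundary of each $H_r$ has a random length fluctuating with $r$; one must verify that, once one records the correct combinatorial summary of this interface, the residual triangulation is a fresh HUIPT, so the layer genuinely decouples from the past.

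Having identified the layer law, I construct $\mathbf M$ by taking a two-sided independent sequence $(L_r)_{r \in \Z}$ of layers with the correct marginal and gluing them along their common boundaries. By construction, the restriction of $\mathbf M$ to layers $r \ge 1$ has exactly the law of the HUIPT, and $\mathbf M$ is stationary under shifts in the layer index, yielding a unimodular random rooted planar graph once rooted on a uniformly chosen vertex of a distinguished cycle. A natural finite approximation --- uniform triangulations of annuli with two long boundary cycles and many layers between them --- should exhibit $\mathbf M$ as a Benjamini--Schramm local limit.

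Granted these properties, recurrence of $\mathbf M$ follows the template of \cite{GN12}: circle pack $\mathbf M$ in the plane, apply the Benjamini--Schramm magic lemma to rule out accumulation points, and reduce from unbounded to bounded degree using the exponential degree tail inherited from the HUIPT. The ``extension of previous methods'' advertised in the abstract is the adaptation needed because $\mathbf M$ is assembled by hand rather than obtained directly as a local limit of uniformly sampled finite maps: one must either prove an intrinsic unimodular version of the recurrence criterion, or verify the annular finite approximation above in enough detail that the standard criterion applies. Finally, since the HUIPT is the induced subgraph of $\mathbf M$ on the vertex set of layers $r \ge 1$, any unit flow from the root to infinity in the HUIPT extends by zero to a unit flow of the same energy in $\mathbf M$, so $\Res(v \leftrightarrow \infty; \mathrm{HUIPT}) \ge \Res(v \leftrightarrow \infty; \mathbf M) = \infty$ almost surely, completing the proof.
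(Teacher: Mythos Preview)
Your high-level architecture is exactly right: layer decomposition via the domain Markov property, full-plane extension $M$ by a two-sided i.i.d.\ sequence of layers, and recurrence of $M$ implying recurrence of $H$ as a subgraph. This matches the paper. The gap is in the step you flag as ``the adaptation needed.''

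Neither of your two proposed routes is what the paper does, and both are problematic. Your annular approximation (uniform triangulations of annuli, uniform vertex root) would almost certainly converge to the full-plane UIPT rather than to $M$: a uniform vertex is typically deep inside a layer, far from every $S_i$, and the local picture there is just the UIPT. The paper's outline explains precisely this obstacle for balls in $H$, and the same reasoning applies to annuli. The intrinsic unimodular route is also not taken; indeed the paper notes that $M$ itself is \emph{not} stationary for simple random walk (only a local modification is), so a direct appeal to unimodularity is not available.

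The paper's actual innovation is to root not at a uniform vertex but at a uniform \emph{skeleton} vertex (a vertex of some $S_i$) in a finite submap $M_n\subset H$, and then to apply the quantitative magic lemma of \cite{GN12} only to the centers of circles corresponding to skeleton vertices. This works because (i) the skeleton has polynomial volume growth ($|B_{sk}(r)|$ is tight after scaling by $r^{-4}$, proved via a critical Galton--Watson tree analysis of the block structure), and (ii) the maximal degree in a skeleton ball of radius $r$ is $O(\log r)$. These two estimates, together with the star--tree transform to bounded degree, let one control the Euclidean ball $\{|z|<r\}$ in the circle packing: at most $O(r^3)$ skeleton centers lie there outside a tiny disc, and the skeleton disconnects the map into blocks whose degrees are then logarithmically bounded. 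The resistance estimate follows. Your proposal does not identify the skeleton-rooting trick or the need for these skeleton-specific volume and degree bounds, and without them the circle-packing step does not go through.
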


While our proof  incorporates some ideas from \cite{BeSc,GN12}, 
new methods are also needed.  A crucial ingredient in those works is that the
graphs under consideration are weak local limits of finite planar graphs,
with a root that is chosen uniformly among all vertices.
After embedding the graphs in
carefully chosen manner in the plane, this leads to a fundamental Lemma on
geometry of arbitrary point sets in the plane \cite[Lemma~4.2]{BeSc}.  A
quantitative version of this lemma \cite[Lemma~3.4]{GN12} was exploited to
prove the recurrence of the UIPT, and is used in this work as well (see
\cref{lem:delta_s} below).

Crucially, the methods of \cite{BeSc,GN12} do not apply directly, since the
HUIPT is not a weak local limit of finite planar graphs.  The original
construction of the HUIPT is as a weak limit of uniform triangulations with
boundaries where the root is restricted to the boundary.  In particular,
the root is not a uniform vertex.  The main novelty of this work lies in
the technique used to overcome this obstacle.  Along the way we obtain a
certain random full plane map $M$ which we call the \textbf{layered UIPT}.
The layered UIPT contains the HUIPT as a subgraph.  We believe $M$ to be of
independent interest and to have further applications.  We prove that $M$
is recurrent which implies that the HUIPT is recurrent.

Another difficulty stems from the fact that (unlike the UIPT), the HUIPT is
not stationary for the simple random walk.  Indeed, viewed from the random
walker, the HUIPT should converge in distribution w.r.t.\ the local
topology to the UIPT, as the walker will typically be far from the boundary.
The map $M$ we introduce is not stationary
itself, but there is a certain local modification of $M$ which is
stationary, and even reversible.  Thus in a certain sense, the map $M$ can
be seen as a stationary reversible version of the HUIPT.  (A random rooted
graph 
$(G,\rho)$ is stationary reversible with respect to simple random walk
$\{\rho,X_1,\dots \}$ if the law of the doubly marked graph $(G,\rho,X_1)$
is the same as the law of
$(G,X_1,\rho)$ see \cite{BC2011}; reversibility and the related property of
unimodularity has been exploited in the
past to great advantage \cite{BC2011,BLPS99,ANR14,BC11,AHNR14,AHNR15,URT}.) 

Finally, a central tool we use is a decomposition of the HUIPT and of the
layered UIPT into
independent layers (see \cref{sec:layer}).  An analogous decomposition was
used by Krikun \cite{kriUIPT,krikun2005local} for the UIPQ.  However, the
domain Markov property of HUIPT gives this decomposition a particularly
elegant structure.  Such a decomposition has great potential for the study
of random maps.  A forthcoming recent work of Curien and Le Gall
\cite{CuLeGall15} analyzes first passage percolation and other
perturbations of the metric structure of the UIPT via such a decomposition.
A continuum version of this decomposition has been introduces in recent
work of Miller and Sheffield as part of a characterization of the Brownian
map \cite{MS15axiom}.

\subsection{Outline of proof}

A naive approach to proving recurrence of the HUIPT is to use the result of
Gurel-Gurevich and Nachmias in \cite{GN12}.  Let $B_r$ be the hull of the
combinatorial ball of radius $r$ around the root (the hull is obtained by
adding the finite components of the complement of the ball).  These are
finite planar graphs with exponential tail on the degrees, so their limit
is almost surely recurrent.  If the root is near the boundary, then the
limit is the HUIPT.  However, the root is unlikely to be near the boundary,
and the limit is the full plane UIPT.  If we can show that the limit contains
$H$ as a subgraph, then we would be done.  However, the limit is the UIPT,
and inclusion of the HUIPT is an open problem.

A more refined approach is to find some subset $S$ of the vertices of the ball 
such that if we pick uniformly a root uniformly from $S$ we obtain a limit
which contains $H$ as a subgraph.  One natural choice is to set $S$ to be
$\partial B_r$, so that the limit is the HUIPT.  However, since $|\partial
B_r| \approx r^2$, this set is much smaller than the volume of $B_r$.  Thus
the limit is not absolutely continuous with respect to the weak local limit
of $B_r$, and we are still short of a proof.

An improvement would be to take $S$ to be the union of the boundaries
$\partial B_j$, for $1 \le j \le r$.  This set is still much smaller than the
volume of $B_r$.
However, the situation can be salvaged: This set $S$ disconnects the
balls into small components (the blocks below); Understanding the structure
of $S$ gives some control over the structure of the resulting limit.
One can circle pack the limiting graph, and the circles corresponding to
the set $S$ will have no accumulation points in the plane.  Moreover,
\cref{lem:delta_s} gives us control over the number of vertices of $S$ in a
Euclidean ball.
In practice, it is more convenient to replace $B_r$ by a different subgraph
of the HUIPT, which is done below.

In order to complete the proof, we also need some new estimates on
the volume of balls in the HUIPT under a certain modified metric, as well
as estimates on vertex degrees.  With these in place, we can push through
the proof of \cite{GN12}.

We comment that there are also natural measures on half planar
quadrangulations, and more general `uniform' half planar maps.  There seems
to be no crucial obstacle to extending our results to such more general
classes of maps.  We restrict here to triangulations where the the layer
decomposition is particularly nice.  As noted, a similar decomposition was
used by Krikun for quadrangulations, and with care it seems the layered
structure as well as the rest of our argument can be carried over to such
more general maps.

\paragraph{Organization.}

In \cref{sec:background} we include some background material which we use,
concerning the weak local topology, planar maps, the UIPT and HUIPT, and
circle packings.  Readers familiar with these topics may wish to skip to
\cref{sec:layer} where we describe the layer decomposition of the HUIPT,
and describe the full plane map $M$ containing the HUIPT.  We also prove
there estimates on the volume growth and vertex degrees in $M$.
In \cref{sec:convergence} we show that a certain sequence of finite maps with
suitable distribution for the root converge to $M$.
Finally, in \cref{sec:recurrence} we combine all ingredients and prove
\cref{thm:main}.
We end with some comments on possible extensions and open questions in
\cref{sec:extens-open-quest}.

\section{Background}
\label{sec:background}

\subsection{Planar maps: The UIPT and relatives}\label{S:maps}

Recall that a \textbf{planar map} is a proper embedding in the plane of a
connected (multi) graph in the plane, considered up to orientation
preserving homeomorphisms.  Components of the complement of the map are
called faces, and are assumed to be simple discs.  All our maps are
\textbf{rooted}, meaning there is a marked directed edge, called the root.
Equivalently, a planar map is a graph together with a cyclic order on the
edges at each vertex, such that the graph can be embedded with the edges
leaving the vertex in order.

Our maps will have a distinguished face which we shall call the external
face. The edges and vertices incident to the external face will be called
the \textbf{boundary} of the map.  When a map has a boundary, we shall
often assume the root is one of the boundary edges.  The boundary
throughout this paper will be either a simple cycle or a simple doubly
infinite path.  In the latter case, the map may be embedded in the half
plane with the boundary along a line.  Such a map is referred to as a half
plane map.

The local topology on the space of rooted graphs is generated by the
following metric: for rooted graphs $G,H$, we define
\[
d(G,H) = e^{-R} \qquad \text{where} \qquad
R = \sup\{r: B_r(G) \cong B_r(H)\}.
\]
Here $B_r$ denotes the ball of radius $r$ around the corresponding roots in
the graph distance, and $\cong$ denotes isomorphism of rooted maps.  For
maps, we require the equivalence relation to preserve the cyclic order on
edges at vertices.

This topology on graphs or maps induces a weak topology on the space of
measures on graphs (resp.\ maps).  A finite, possibly random, graph yields
a measure on rooted graphs by taking the root to be a uniform directed edge
(or vertex).  The \textbf{weak local limit} (or \textbf{Benjamini-Schramm
  limit}) of a sequence of finite graphs is the weak limit of the induced
measures.  The starting point of our work is the following result of
Gurel-Gurevich and Nachmias (and of Benjamini and Schramm with a bounded
degree assumption).

\begin{thm}[\cite{BeSc,GN12}]
  Let $G_n$ be finite planar graphs such that the degree of a uniform
  vertex has uniformly exponential tail.  Then $\lim G_n$ is almost surely
  recurrent.
\end{thm}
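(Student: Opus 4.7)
The plan is to combine circle packing with the distributional symmetries inherent in weak local limits rooted at a uniform vertex. Let $(G,\rho)$ denote the limit. Because $\rho$ is the limit of a uniform vertex in a finite graph, the rooted measure is \emph{unimodular} (equivalently, stationary and reversible for the simple random walk). This is the feature we will exploit: every claim we make about the geometry of the packing near the root must be invariant in a quantitative sense under rerooting.

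First I would reduce to the setting of triangulations. Each face of $G_n$ can be canonically triangulated (e.g.\ via a star from a lowest-labelled incident vertex); this increases the degrees in a controlled way, so the exponential tail on the degree of a uniform vertex is preserved, and recurrence of the triangulated limit implies recurrence of $G$ (since $G$ is a subgraph). Second, I would invoke the He--Schramm theorem (and its extension allowing unbounded degree with accumulation-set carefully handled) to obtain a circle packing of the limit in either the plane $\mathbb C$ or the unit disk $\mathbb D$; the key dichotomy is whether the packing is parabolic or hyperbolic. Third, I would apply the geometric lemma referenced as \cref{lem:delta_s}: under the invariant measure on rooted packed planar graphs, the expected number of circles whose centres fall in a fixed compact region, weighted by a function of their radius, is bounded by a universal constant times the Euclidean area. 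This is the \emph{magic input} coming from unimodularity, and rules out accumulation in the interior of the carrier; it forces the packing to be parabolic, i.e.\ in $\mathbb C$ with no accumulation points.

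Once the parabolic packing is in hand, the proof of recurrence proceeds by constructing enough disjoint cut-sets separating the root from infinity, via annuli in the Euclidean plane. For each Euclidean annulus $\{r \le |z| \le 2r\}$, \cref{lem:delta_s} together with the exponential tail on degrees controls both the number of edges crossing the annulus and the size of the circles there, so a Nash--Williams / effective-resistance estimate on concentric annuli gives $\Res(\rho \leftrightarrow \infty) = \infty$ almost surely, which is equivalent to recurrence.

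The main obstacle, and the reason \cite{GN12} was needed beyond \cite{BeSc}, is the absence of an a priori degree bound: He--Schramm's original argument and the qualitative Benjamini--Schramm geometric lemma break down when degrees are unbounded, because a small number of very high-degree vertices could both distort the packing and dominate the resistance sums. The workhorse is the quantitative version \cref{lem:delta_s}, used in combination with the exponential tail hypothesis via a union bound / Markov estimate to show that atypically-high-degree vertices are sufficiently spread out in the packing that they cannot short-circuit the annular cut-sets. Managing the interaction between the graph-combinatorial tail on degrees and the Euclidean/metric control supplied by the packing is where essentially all the technical work lies.
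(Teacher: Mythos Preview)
The paper does not prove this theorem; it is stated in the background section with the citation \cite{BeSc,GN12} and no proof is given. So there is no ``paper's own proof'' to compare against beyond the reference itself. What you have written is a sketch of what you believe the argument in \cite{BeSc,GN12} to be, and it deserves to be evaluated as such.

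Your overall architecture (circle packing, a geometric lemma about point configurations, then an annular Nash--Williams resistance estimate) is correct in spirit, but there is a genuine gap in how you handle unbounded degrees. You propose to invoke the He--Schramm dichotomy ``and its extension allowing unbounded degree''. No such extension, in the form you describe, is available: the He--Schramm result that CP-parabolic $\Leftrightarrow$ recurrent relies on the Ring Lemma, which fails without a degree bound, and the circle packing of an unbounded-degree triangulation need not obey the dichotomy cleanly. Neither \cite{BeSc} nor \cite{GN12} proceeds by packing the infinite limit and appealing to He--Schramm; both pack the \emph{finite} graphs, normalise so the root circle is the unit disc, and take subsequential limits of the packings. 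The key device in \cite{GN12} that you are missing entirely is the \emph{star--tree transform}: each vertex of degree $d$ is replaced by a balanced binary tree with $d$ leaves and edge-conductances $d$, yielding a bounded-degree weighted graph whose recurrence implies recurrence of the original. Only after this reduction can the Ring Lemma and the annular estimates be applied. The exponential tail on degrees is then used to bound the maximal conductance in a Euclidean ball by $O(\log r)$, which is beaten by the $\Theta(\log r)$ unit-conductance resistance across the annulus.

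You have also mischaracterised \cref{lem:delta_s}. It is a purely deterministic statement about an arbitrary finite set $\mathcal{C}\subset\mathbb{R}^2$: the fraction of $(\delta,s)$-supported points is at most $A\delta^{-2}s^{-1}\log(\delta^{-1})$. It says nothing about invariant measures, expected circle counts, or unimodularity; the probabilistic input is simply that the root is a uniform point of $\mathcal{C}$, so it is $(\delta,s)$-unsupported with the complementary high probability. Unimodularity is not invoked anywhere in the actual proof.
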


It has been known for some time \cite{UIPT1,UIPT2,AR13} that the uniform
measures on finite planar triangulations with boundary converge in the weak
local topology as the area of the map and the boundary length tend to
infinity.

\begin{thm}\cite{UIPT1,AR13}
  If $T_n$ is a uniform rooted triangulation (map with all faces triangles)
  with $n$ vertices, then the limit $T = \lim T_n$ exists. 
  If $T_{n,m}$ is a uniform boundary rooted triangulation with $m$ boundary
  vertices and $n$ internal vertices, then we have the limit
  \[
  T_{n,m} \xrightarrow[m,n/m \to \infty]{d} H.
  \]
\end{thm}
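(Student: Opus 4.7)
My plan is to prove both convergence statements via a combination of exact enumeration of rooted triangulations and a peeling (ball-exploration) argument. Fix a finite rooted map $t$ with a distinguished outer face of perimeter $p$ and $v$ internal vertices, and let $\phi(p,k)$ denote the number of rooted triangulations of a $p$-gon with $k$ internal vertices. For the first statement, if we require that the hull of the radius-$r$ ball in $T_n$ (including all finite components of its complement) equals $t$, then the unexplored region is a triangulation of a $p$-gon with $n-v$ internal vertices, so
\[
\P\bigl(B_r^\bullet(T_n) = t\bigr) = \frac{\phi(p, n-v)}{\tau_n},
\]
where $\tau_n$ counts rooted triangulations with $n$ vertices. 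Both $\phi$ and $\tau_n$ admit closed-form expressions (Tutte, Brown, Krikun), and Stirling's formula shows this ratio converges as $n \to \infty$ to an explicit quantity depending only on $p$ and $v$. Summing the limiting probabilities over all maps $t$ compatible with a fixed root-neighborhood yields a probability measure: the total mass equals one because the generating series $\sum_k \phi(p,k) z^k$ has a common radius of convergence $\rho_c$, and the pointwise limit is obtained precisely at the critical point. This identifies $T$ as a probability measure on infinite rooted triangulations.

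For the second statement, the same idea applies to $T_{n,m}$: if the ball-hull equals $t$, the unexplored region is a triangulation of an $(m + p - m_t)$-gon with $n-v$ internal vertices, where $m_t$ counts the original boundary edges consumed by $t$. Thus
\[
\P\bigl(B_r^\bullet(T_{n,m}) = t\bigr) = \frac{\phi(m + p - m_t, n-v)}{\phi(m, n)}.
\]
I would first take $n \to \infty$ with $m$ fixed; using row-asymptotics $\phi(q,n)/\phi(m,n) \to c(q,m)$, this yields an infinite triangulation of the $m$-gon, namely the UIPT of the $m$-gon. I would then take $m \to \infty$, exploiting the corresponding boundary-column asymptotics of $\phi$ and the elementary fact that the local neighborhood of a boundary-rooted edge is determined by only finitely many boundary vertices, to obtain $H$. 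To handle the joint limit $m, n/m \to \infty$ directly, I would establish the same limiting ratio for any diagonal sequence $(m_k, n_k)$ with $n_k/m_k \to \infty$; this follows once one has uniform control of the two-parameter ratio of $\phi$ on such sequences.

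The principal obstacle is the two-parameter asymptotic analysis of $\phi(m,n)$ in the regime $n/m \to \infty$, and verifying that the double limit is independent of the order in which $n$ and $m$ are sent to infinity. This requires sharp Stirling-type asymptotics applied to the explicit Brown/Krikun formula, uniform in both parameters over the relevant range. The tightness part — that no probability escapes under the double limit — reduces to identifying the exponential growth constants of $\phi(m,n)$ in each variable and checking that any subsequential limit of the laws of $T_{n,m}$ is supported on infinite half-planar triangulations with a doubly infinite boundary. Once the finite-dimensional distributions converge to a consistent family with total mass one, Kolmogorov extension produces the limit law, which one then verifies agrees with the standard definition of $H$ by matching the limiting probabilities against the peeling distribution of the HUIPT.
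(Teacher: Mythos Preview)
The paper does not prove this theorem; it is stated as background and attributed to \cite{UIPT1,AR13}, so there is no proof in the paper to compare against. Your outline is essentially the approach of those references: for the UIPT, Angel--Schramm compute exactly the ratio $\phi(p,n-v)/\tau_n$ using the explicit enumeration of rooted triangulations and pass to the limit via Stirling, and tightness is handled by showing the limiting weights sum to one at the critical point, just as you describe.

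For the half-plane limit, your enumerative strategy is sound but worth comparing with what \cite{AR13} actually does. Rather than a head-on two-parameter Stirling analysis of $\phi(m+p-m_t,n-v)/\phi(m,n)$ uniform over sequences with $n/m\to\infty$, Angel--Ray first classify all translation-invariant half-planar triangulations with the domain Markov property, obtaining a one-parameter family; the limit of $T_{n,m}$ is then identified with the unique critical member of that family by matching the peeling probabilities. This sidesteps the delicate uniform asymptotics you flag as the principal obstacle, at the cost of an abstract classification argument. Your direct route would work too, but the uniform control of the ratio along arbitrary diagonals with $n/m\to\infty$ is genuinely the hard step and would need to be carried out carefully (e.g.\ via the integral representation of $\phi$ or the transfer-matrix/peeling equations), not just asserted.
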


The limits $T$ and $H$ are the \textbf{UIPT} and \textbf{half plane UIPT}.
We denote the law of $H$ by $\H$.

The map $H$ also enjoys \textbf{translation invariance} with respect
to the root. This means that the law of the map remains invariant if
we translate the root along the boundary. See \cite{AR13} for a
detailed definition.

The distribution of a neighbourhood of the root in the HUIPT has a simple
and explicit formula which can be taken as an alternative direct definition
of HUIPT.

\begin{lemma}[\cite{AR13}]\label{lem:prob}
  Let $Q$ be a simply connected triangulation with a simple boundary, with
  some marked connected segment of $\partial Q$ containing the root, and let
  $H$ be the HUIPT.  Consider the event $A_Q$ that $Q$ is a sub-map of $H$
  with the roots coinciding and the marked segment being the 
  intersection of $Q$ with $\partial H$. Then
  \[
  \H(A_Q) = 6^{\#V_i(Q)} 9^{-\#F(Q)}
  \]
  where $\#V_i(Q)$ is the number of vertices of $Q$ not in $\partial H$ and
  $\#F(Q)$ is the number of faces of $Q$.
  Moreover, conditioned on $A_Q$, the complement $H\setminus Q$ also has
  law $\H$.
\end{lemma}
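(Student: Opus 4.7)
The plan is to compute $\H(A_Q)$ from the weak local limit definition of $H$. Since $A_Q$ depends only on a bounded neighbourhood of the root, its probability equals the limit of $\P_{T_{n,m}}(A_Q)$ as $n,m\to\infty$ with $n/m\to\infty$. Let $v_i=\#V_i(Q)$, $f=\#F(Q)$, let $p$ be the number of edges of the marked boundary segment, and let $q$ be the number of edges of $\partial Q$ off the marked segment. Writing $v_{\text{int}}$ for the number of strictly interior vertices of $Q$, one has $v_i=v_{\text{int}}+q-1$ (the $q-1$ interior vertices of the non-marked arc of $\partial Q$ lie off $\partial H$), and Euler's formula for a triangulation of a simple polygon gives $f=2v_{\text{int}}+p+q-2=2v_i+p-q$.

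Let $\phi(N,M)$ be the number of rooted triangulations of the $M$-gon with $N$ internal vertices. Extensions of $Q$ inside $T_{n,m}$ are in bijection with rooted triangulations of the complementary simply connected region, which has $m-p+q$ boundary edges and $n-v_i$ interior vertices, so
\[
\P_{T_{n,m}}(A_Q)=\frac{\phi(n-v_i,\,m-p+q)}{\phi(n,m)}.
\]
I would then invoke Tutte's enumeration formula for $\phi(N,M)$, whose asymptotics read $\phi(N,M)\sim c(M)\,(27/2)^N\,N^{-5/2}$ for fixed $M$, with $c(M)\sim c\cdot 9^M\,M^{\alpha}$ as $M\to\infty$ for some exponent $\alpha$. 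Taking $n\to\infty$ first contributes a factor $(2/27)^{v_i}$ (the polynomial $N^{-5/2}$ corrections cancel since $v_i$ is bounded), and then $m\to\infty$ contributes $9^{q-p}$ (the $M^{\alpha}$ prefactors cancel likewise). Combining and using $p-q=f-2v_i$ yields
\[
(2/27)^{v_i}\cdot 9^{q-p} \;=\; (2/27)^{v_i}\cdot 9^{2v_i-f} \;=\; 6^{v_i}\cdot 9^{-f},
\]
as claimed.

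For the conditional statement, the same counting argument gives, for any admissible extension $Q\subseteq Q'$ with parameters $v_i',p,q'$,
\[
\frac{\P_{T_{n,m}}(A_{Q'})}{\P_{T_{n,m}}(A_Q)}=\frac{\phi(n-v_i',\,m-p+q')}{\phi(n-v_i,\,m-p+q)},
\]
which is exactly the probability that a uniform triangulation of an $(m-p+q)$-gon with $n-v_i$ interior vertices contains $Q'\setminus Q$ as a sub-map in the appropriate way. Passing to the double limit shows that conditionally on $A_Q$, the law of $H\setminus Q$ is again the HUIPT, since the limit depends only on the boundary structure of the residual region. The main obstacle I anticipate is the careful bookkeeping for the asymptotics of $\phi(N,M)$: the exponential-in-$N$, polynomial-in-$N$, and $M$-dependent pieces must cancel correctly to produce the clean formula $6^{v_i}\,9^{-f}$ with no residual constant. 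This reduces to a Stirling-type computation, and can be cross-checked against the base case of $Q$ a single triangle with apex off $\partial H$ (where $v_i=1$, $f=1$, giving the predicted probability $2/3$), which pins down any ambiguous constant via the multiplicativity provided by the domain Markov property itself.
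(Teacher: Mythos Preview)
The paper does not supply a proof of this lemma: it is quoted from \cite{AR13} and used as a black box. So there is no ``paper's own proof'' to compare against. That said, your approach is the standard one and is essentially what is carried out in \cite{AR13}: pass to the finite uniform triangulations $T_{n,m}$, count extensions of $Q$ by enumerating the complementary disc, and extract the limiting probability from the known asymptotics of $\phi(N,M)$. Your Euler-formula bookkeeping ($f=2v_i+p-q$) and the final algebra $(2/27)^{v_i}9^{q-p}=6^{v_i}9^{-f}$ are correct, and the sanity check against the single-triangle step probability $2/3$ is the right way to pin down constants.

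Two small points to tighten. First, the theorem you invoke gives $H$ as a limit along $m\to\infty$, $n/m\to\infty$ jointly, whereas you take the iterated limit $n\to\infty$ then $m\to\infty$; this is harmless here because $A_Q$ is a local event and the weak limit exists, but it is worth a sentence. Second, the bijection ``extensions of $Q$ in $T_{n,m}$ $\leftrightarrow$ triangulations of the complementary $(m-p+q)$-gon'' requires you to fix a root on the complement (e.g.\ the boundary edge immediately clockwise from the marked segment) so that the count on the right really is $\phi(n-v_i,m-p+q)$; as written the rooting convention is implicit. With these clarifications your argument is complete, and it simultaneously yields the domain Markov statement exactly as you indicate.
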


The final claim of this lemma is referred to as the \textbf{domain Markov
  property} of the HUIPT (see \cite{AR13}).

\subsection{Peeling} \label{sec:peeling}

One of the main tools we are going to use is known as \textbf{peeling}
which was introduced by Watabiki \cite{Wat} and given its present form by
Angel \cite{UIPT2}.  This technique can be applied to more general class of
maps, we focus primarily on HUIPT. The central idea is to explore (or
``peel'') a map face by face.  There can be many possible algorithms to do
it, and generally an algorithm is chosen depending on the purpose.  The
domain Markov property in the HUIPT gives the peeling process a rather
simple form.  For further applications of this powerful tool see e.g.\
\cite{AC13,curien13glimpse,menard2013perc,AR13,BC11}.

Consider the unique triangle incident to the root edge of the half plane
UIPT $H$.  One of the following two events must occur: With probability
$2/3$, the triangle can be incident to an internal vertex.  Otherwise the
triangle incident to the root edge is attached to a vertex on the boundary
which is at a distance $i$ to the left (resp.\ right) of the root edge
along the boundary.  Let $p_i$ be the probability of this event.  Moreover,
let $p_{i,k}$ be the event that the finite face enclosed by such a triangle
has $k$ vertices.  Let $\phi_{k,i}$ denote the number of triangulations of
an $i$-gon with $k$ internal vertices.  The following were derived in
\cite{UIPT2}.
\begin{equation}\label{eq:crit_pik}
  \begin{split}
    p_{i,k} &= \phi_{k,i+1} \left(\frac{1}{9}\right)^i \left(\frac{2}{27}
    \right)^k \\
    p_i &= \sum_kp_{i,k}=\frac{2}{4^i}
    \frac{(2i-2)!}{(i-1)!(i+1)!} \sim \frac1{2\sqrt{\pi}}i^{-5/2}
  \end{split}
\end{equation} 

The \textbf{Boltzmann triangulation} of an $m$-gon with weight
$q\leq\frac{2}{27}$, is the probability measure on that assigns weight $q^n
/ Z_m(q)$ to each rooted triangulation of the $m$-gon having $n$ internal
vertices, where
\[
Z_m(q) = \sum_n \phi_{n,m} q^n .
\]
The partition function $Z_m$ can be computed explicitly, and is finite for
$|q|\leq 2/27$.  When peeling a face, on the event that the face
connects to a vertex at distance $i$, the resulting component with boundary
$i+1$ is filled with a Boltzmann triangulation with weight $2/27$.

Having revealed the triangle incident to the root edge and the finite
component of its complement (if any), the unrevealed map is another half
plane map having law $\H$ by the domain Markov property.  This enables us
to peel the HUIPT via a succession of i.i.d.\ peeling steps.  Note that the
probabilities $p_{i,k}$ do not depend on the edge we choose to peel, by
translation invariance.

\subsection{Circle Packings}

As in some prior works \cite{BeSc,GN12,AHNR14}, circle packings play a
central role for us. We state here the two key results needed.  We refer
the reader to \cite{K36} and the above papers for further information.

A \textbf{circle packing} of a graph $G$ is a collection of circles in the
plane with disjoint interiors, one corresponding to each vertex, such that
two circles are tangent if and only if the corresponding vertices are
adjacent.  The Kobe-Andreev-Thurston Circle Packing Theorem states that
every finite planar graph has a circle packing.  There are extensions to
infinite planar triangulations, which we do not need at present.

In order to control the geometry of graphs in terms of circle packings, it
is useful to control the ratio of radii of circles.  This is done by the so
called Ring Lemma, which states that in a circle packing of a
triangulation, the ratio of radii of adjacent circles is bounded by some
constant depending only on the maximal degree of the graphs (for
non-boundary vertices).

\section{Layer decomposition}\label{sec:layer}

Given a half planar map $H$, we define its layer decomposition as follows.
For each $i$, we will have a half plane map $H_i$.  These will form a
decreasing family of sub-maps of $H$, and each is a half plane map.  The
boundary of $H_i$ is denoted $S_i$, and  is a doubly infinite simple path
in $H$.  The vertices in $S_i$ are called \textbf{skeleton vertices}.

Inductively, we start with $H_0=H$ and its boundary $S_0=\partial H_0$.
Having defined $H_i$ and $S_i$, define the \textbf{layer} $L_{i+1}$ to be the
hull (relative to $H_i$) of the set of faces of $H_i$ incident to the
boundary $S_i$.  Thus $L_{i+1}$ forms a layer near the boundary of $H_i$.  We
then define the next sub-map $H_{i+1} = H_i \setminus L_{i+1}$, and $S_{i+1}$
to be its boundary (see \cref{fig:one_layer}).
For each $i$ we have that $S_i$ is a simple infinite path which
separates $L_{i}$ from $L_{i+1}$.  Conversely, the boundary of $L_i$ is $S_i
\cup S_{i+1}$.  Note also that by construction the sets $S_i$ are disjoint.

\begin{figure}
  \centering
  \includegraphics[width=.8\textwidth]{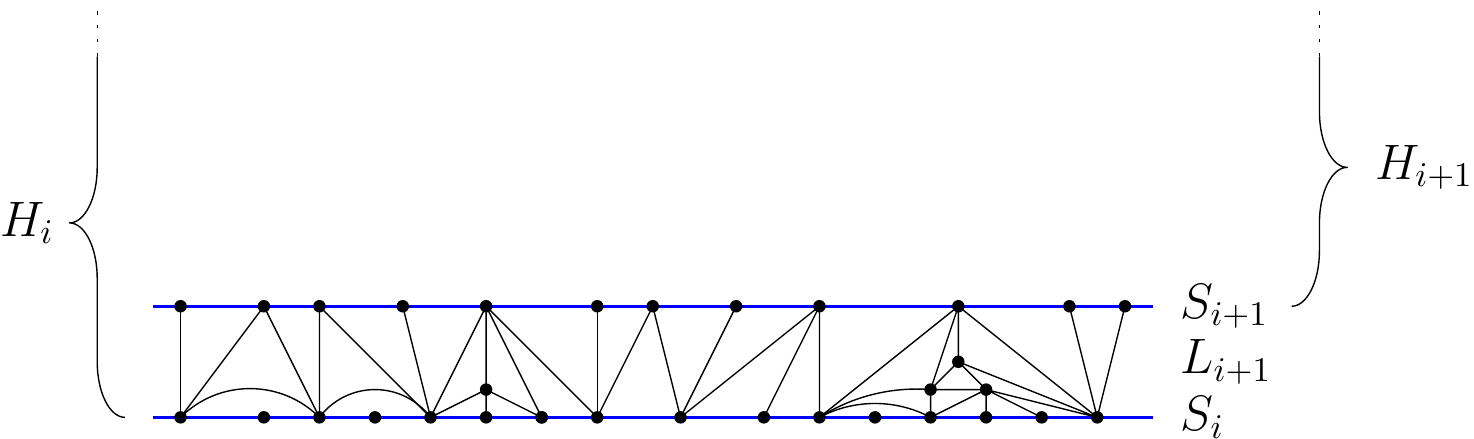}
  \caption{Construction of layer $L_i$ from $H_i$.  $L_i$ is the hull of
    all faces incident to the boundary $S_i$.  The entire HUIPT is $H_0$.}
  \label{fig:one_layer}
\end{figure}

Note that we have not yet determined a root for the maps $H_i$.  A root can
be chosen for each $H_i$ in various manners, and we will do that below.
However, the construction above is independent of the choice of root.

Let $e$ be some edge in $S_{i+1}$ for some $i\ge 0$.  Then there is unique
face in $L_{i+1}$ containing $e$, and the third vertex of that face must be in
$S_i$, since otherwise that face would not have been included in $L_i$.
For two adjacent edges $e,e'\in S_{i+1}$, the corresponding triangles of
$L_i$ that contain $e$ and $e'$
split $L_i$ into two infinite and one finite component.  We refer to the
finite components arising in this way as \textbf{holes}, since the sub-map
induced by skeleton vertices is missing all vertices in such holes.  Note
that it is 
possible that the two triangles share a common edge, in which case the hole
degenerates to that single edge.  It is also possible that the two
triangles share two vertices, one in $S_i$ and one in $S_{i+1}$, but the
edges are distinct, and in that case the hole is a $2$-gon.  Both occur in
the lower layer in \cref{fig:blocks}.
This observation implies that $L_i$ can be
decomposed as an alternating sequence of (possibly degenerate) holes and
faces containing the edges of $S_{i}$.  We can thus partition $L_i$ to a 
sequence of \textbf{blocks}, where each block consists of a hole and the
triangle immediately to its right.  The \textbf{lower
  boundary} of a block in $L_i$ is the set of edges of $S_{i-1}$ in the block,
which can be any non-negative integer.  (The upper boundary always consists
of a single edge.)  Apart from the lower and the upper boundary, the
block has two more boundary edges, where it is attached to blocks to its left 
and right.

\begin{figure}
  \centering
  \includegraphics[width=.8\textwidth]{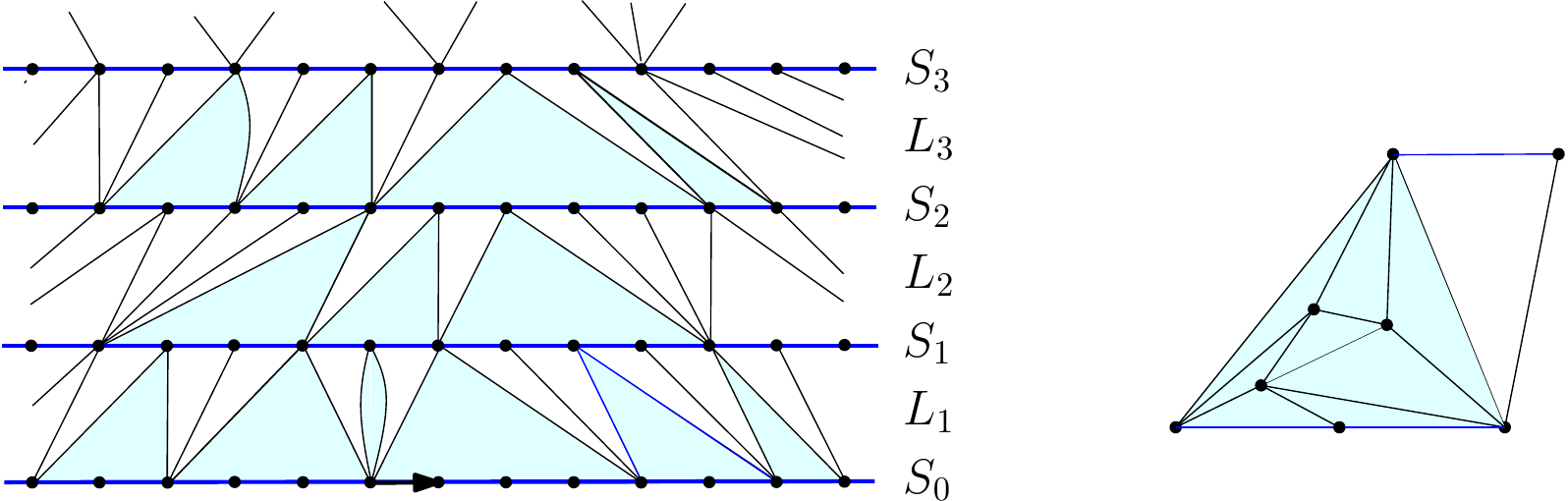}
  \caption{Left: Decomposition of layers into alternating holes and faces
    adjacent to top boundary edges.  Holes are shaded, and vertices and
    edges within holes are not shown. Note that some holes degenerate.
    Right: A hole and face to its right form a block.}
  \label{fig:blocks}
\end{figure}

\subsection{Decomposition of the half plane UIPT}
\label{sec:decomp-half-plane}

Up to this point we described the layer decomposition of a general half
plane map.  We now focus our attention on the specific case of the
half-plane UIPT.  While in our case, the description above is faithful, in
arbitrary half-plane maps things could break down.  For example, it is
possible that $L_1$ is the entire map. Indeed, this is the
case in the sub-critical half plane maps with the domain Markov property
that were constructed in \cite{AR13}.

\begin{lemma}
  \label{L:H_i_law}
  For the HUIPT, almost surely, $L_1$ is not the entire half plane, $S_1$
  is a doubly infinite simple path and $H_1$ is also a half plane map.
  Moreover, if we choose a root for $H_{i+1}$ as a function of
  $L_1,\dots,L_i$, then $H_{i+1}$ has the law of the half plane UIPT, and
  is independent of $L_1$.  Consequently, the layers $\{L_i\}_{i \ge 1}$ are
  i.i.d.
\end{lemma}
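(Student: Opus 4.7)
The plan is to reveal $L_1$ by a sequential peeling procedure and apply the domain Markov property (\cref{lem:prob}) at each stage. The idea is that after finitely many peel steps the unrevealed region is a half plane triangulation with law $\mathbb{H}$ independent of what has been revealed; passing to the limit along an exhaustion of $S_0$ then identifies $H_1$.

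First I would set up a peeling algorithm that processes the edges of $S_0$ one by one, alternating right and left of the root. At each step, the current unrevealed region has law $\mathbb{H}$, and the next edge $e$ of $S_0$ still on its boundary is peeled: the unique triangle of $H$ above $e$ is revealed, and if its third vertex lies on $S_0$ rather than being internal, then a hole is enclosed and filled by an independent Boltzmann triangulation of weight $2/27$ (obtained by applying \cref{lem:prob} to the triangulation of that hole). After $n$ peel steps on each side of the root, the revealed region $Q_n$ is a finite simply connected triangulation whose intersection with $\partial H$ is a connected segment of $S_0$, so \cref{lem:prob} gives that $H\setminus Q_n$ has law $\mathbb{H}$ and is independent of $Q_n$.

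Taking $n\to\infty$, the increasing sets $Q_n$ fill out $L_1$: every triangle of $L_1$ is either directly above some edge of $S_0$ (and hence revealed the first time that base edge is processed) or lies in some hole filled at that time. Hence $H\setminus Q_n\downarrow H_1$, and since at every finite stage the unrevealed region has law $\mathbb{H}$ independently of $Q_n$, the same holds in the limit: $H_1$ has law $\mathbb{H}$ and is independent of $L_1$, once a root for $H_1$ is chosen as a measurable function of $L_1$ (for instance, the edge of $S_1$ just to the right of the apex of the triangle above the root of $H$). Iterating, each $L_{i+1}$ is the first layer of $H_i$, which has law $\mathbb{H}$ and is independent of $H_{i+1}$, so the $L_i$ are i.i.d.

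For the non-degeneracy claims, I would invoke the explicit probabilities in \eqref{eq:crit_pik}: each peel step produces an internal apex with probability $2/3$, and otherwise creates a hole of perimeter $i+1$ with probability $p_i\sim c\,i^{-5/2}$, which is then filled by an almost surely finite Boltzmann triangulation. Since the peel steps are i.i.d.\ by the domain Markov property, the law of large numbers gives infinitely many internal apexes on both sides of the root, so $S_1$ is doubly infinite; simplicity is built into the planar exploration, and finiteness of each hole ensures $L_1\ne H$. The main obstacle I expect is the clean limit step: verifying that $Q_n\uparrow L_1$ without leakage and that $S_1$ really separates $L_1$ from a bona fide half plane region, which requires a geometric check that the apexes produced at adjacent peel steps assemble into a consistent boundary path, together with the finite-tail estimate on hole sizes coming from \eqref{eq:crit_pik}.
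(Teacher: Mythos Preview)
Your overall strategy---peel, apply the domain Markov property at each stage, pass to the limit---matches the paper's, but the specific peeling algorithm you propose does not reveal $L_1$, and this is a genuine gap rather than a technical check to be filled in later.

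Recall that $L_1$ is the hull of all faces of $H$ incident to $S_0$, where \emph{incident} means sharing at least one vertex. Your procedure peels, for each edge $e\in S_0$, only the single triangle with base $e$, filling the enclosed region when its apex lands on $S_0$. But a face can touch $S_0$ at a single vertex without having any edge on $S_0$: if $v\in S_0$ has two adjacent $S_0$-edges whose base triangles have distinct internal apices $a_1\neq a_2$, then the entire fan of triangles around $v$ between $a_1$ and $a_2$ lies in $L_1$ yet is never revealed by your process. Consequently $Q_\infty\subsetneq L_1$, the boundary of $H\setminus Q_\infty$ still passes through every such $v\in S_0$, and $H\setminus Q_\infty\neq H_1$. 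Your ``internal apices'' are not the vertices of $S_1$ (those are the apices of the triangles hanging below edges of $S_1$, not the triangles sitting above edges of $S_0$), so the non-degeneracy argument in your last paragraph does not show that $S_1$ is a doubly infinite path. The ``geometric check that the apexes assemble into a consistent boundary path'' that you flag as an obstacle will in fact fail.

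The paper's remedy is to peel not along $S_0$ but along the \emph{active} boundary: the edges of $\partial(H\setminus Q_n)$ that are not on $S_0$, always choosing the rightmost (and in a second phase the leftmost) one. Each such edge has an endpoint on $S_0$, so every peeled face touches $S_0$ and the exploration stays inside $L_1$ while eventually exhausting it. The number $Y_n$ of active edges is then a reflected random walk with step mean $\E\xi=1/3>0$, and this positive drift is the missing key idea: the law of large numbers gives $Y_n\to\infty$, and a renewal argument (at times $n$ with $Y_n=\inf_{t\ge n}Y_t$, which occur with positive probability) produces active edges that are never subsequently swallowed. Those surviving edges belong to $S_1$, and this is precisely what certifies $L_1\neq H$ and that $H_1$ is a genuine half-plane map.
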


\paragraph{Peeling to reveal a layer.}
To prove \cref{L:H_i_law}, it shall be useful to consider the following 
application of peeling in the half plane UIPT.  
An analogue of this for the UIPT was used in \cite{UIPT2} to study the volume
growth of the UIPT.  In the HUIPT, the process becomes simpler.
Initially, make the root edge active. At any later time, the active edges
are those at the boundary of the unseen part of the map that are not on the
original boundary.  The active edges form a single contiguous segment, and
we peel either the rightmost or leftmost active edge. Let $Y_n$ be the
number of active edges in this segment after $n$ steps (with $Y_0=1$).

Let $Y_n$ be the length of this segment after $n$ steps, except
that by convention we set $Y_0=1$.
Define now the i.i.d.\ sequence $\xi_n$ as follows.  If the $n$th step
connects the peeled edge to a new internal vertex, then $\xi_n=1$.  If it
connects to a vertex at distance $i$ towards the rest of the active segment
then $\xi_n=-i$.  Finally, if the face connects to a vertex to the right,
$\xi_n=0$.  It is easy to see that $\xi_n$ determines the change in $Y_n$.
Specifically, he have
\begin{equation}
 Y_n = (Y_{n-1} + \xi_n) \vee 1.\label{eq:Yn}
\end{equation}

The $\xi_n$ variables are i.i.d.\ with distribution
\begin{equation}\label{eq:xi}
  \P(\xi=i) = \begin{cases}
    2/3 & i=1, \\ 1/6 & i=0, \\ p_i/2 & i<0,
  \end{cases}
\end{equation}
where $p_i$ is given in \eqref{eq:crit_pik}.  It follows from the
computations in \cite{UIPT2} that $\E(\xi) = 1/3$.
Note also that every peeled face is incident to some vertex in the
original boundary, and so all faces revealed in this procedure are part of
$L_0$.  Finally, the number of edges of the original boundary that are
swallowed at each step are also i.i.d.\ with mean $1/3$.

\begin{proof}[Proof of \cref{L:H_i_law}]
  When peeling to reveal a layer, since $\E\xi=1/3>0$, the strong law of
  large numbers implies that $Y_n/n$ converges to $1/3$ almost surely and in  
  particular, $Y_n$ tends to infinity almost surely.  

  Start by peeling at the rightmost active edge $n$ times.  The law of
  large numbers ensures that the number of edges to the right of the root
  that are swallowed grows like $n/3$.  While some of the previously active
  edges contributing to $Y_n$ are swallowed at a later step, at each $n$
  there is probability $1/3$ that $Y_n = \inf_{t\geq n} Y_t$ (via Theorem~3
  of \cite{AR08}), and in that case, only the rightmost active edge is
  subsequently swallowed.  In particular, the number of boundary vertices
  to the left of the root that are swallowed is tight.

  Next, reverse direction, and peel towards the left for $n$ additional
  steps.  At this time we revealed some finite map $P_n$ which contains all
  faces incident to edges within distance $a_n$ along the boundary to the
  left and $b_n$ along the boundary to the right with both $a_n,b_n$ close
  to $n/3$ with high probability.  Thus $P_n$ converges to the layer $L_1$.
  Moreover, the number of edges from the $Y_n$ that are swallowed in the
  second stage is tight, and therefore with high probability some of them
  remain on the boundary as $n\to\infty$. This implies that $L_1$ is not
  the entire map.

  To see that $H_1$ is again a half plane UIPT, and is independent of
  $L_1$, root $M_n = H\setminus P_n$ at some canonically chosen vertex
  $\rho_n$, say the first one reached in the process that is on the
  boundary of $P_n$.  From the domain Markov property, $(M_n,\rho_n)$ has
  the law of the half plane UIPT, and is independent of $P_n$.  This
  completes the proof, since $\rho_n$ is eventually constant, and so
  $(M_n,\rho_n)$ converges to $ (H_1,\rho)$.  Finally, by translation
  invariance of $H_1$, we can choose a root for $H_1$ as any function of
  $L_1$ and the law of $H_1$ will not change.  

  By induction, the same holds for subsequent layers.
\end{proof}

\begin{prop}
  \label{prop:L0}
  In each layer $L_i$ we have the following.
  \begin{enumerate}[nosep]
  \item The blocks are independent.  All have the same law, except for the
    block containing the root edge which is biased by the size of its lower
    boundary. Given the block containing the root edge, the root edge is
    distributed uniformly among the edges in its lower boundary.
  \item The number of edges $B$ in the lower boundary of a block (other
    than the one containing the root edge) satisfies $\E(B) = 1$ and
    $\P(B>t) \sim ct^{-3/2}$.
  \item Conditioned on the lower boundary length of $B$, the component of
    $H$ within the hole is a Boltzmann map of an $(B+2)$-gon with parameter
    $2/27$.
  \end{enumerate}
\end{prop}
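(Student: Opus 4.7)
The plan is to derive all three parts from the peeling procedure of \cref{sec:peeling} combined with the domain Markov property (\cref{lem:prob}) and translation invariance of $H$. By \cref{L:H_i_law}, it suffices to treat the first layer $L_1$; the extension to $L_i$ for $i\ge 2$ is then automatic, since each $H_{i-1}$ is again a HUIPT independent of $L_1,\dots,L_{i-1}$.

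I would explore $L_1$ via a peeling that traverses the layer one block at a time. Starting from the root and moving rightward along $S_0$, peel into the root's hole until it is fully revealed, then proceed to the next block; on the other side of the root proceed analogously leftward. At each peeling step the peeled face is a triangle of $L_1$, and any cut-off finite region is filled independently by a Boltzmann triangulation of weight $2/27$, as recalled in \cref{sec:peeling}. After revealing all faces of a single block, the domain Markov property (\cref{lem:prob}) ensures that what remains of $H$ is again an independent HUIPT (up to a suitable choice of root), from which the next block can be exposed using the same procedure. Consequently the blocks to the right of the root are i.i.d., and similarly on the left, establishing the independence statement in part~(1) apart from the root block. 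Part~(3) is then immediate from the Boltzmann structure of the cut-off regions: the hole of a generic block is a Boltzmann $(B+2)$-gon with weight $2/27$, independent of the rest of the map (using the gluing property of critical Boltzmann triangulations at $q=2/27$ if the hole is assembled from several peeling sub-steps).

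For part~(2), the lower boundary length $B$ of a generic block is the number of $S_0$ edges swallowed while exposing the block. The asymptotic $p_i\sim c\, i^{-5/2}$ in \eqref{eq:crit_pik} together with a one-big-jump argument yields $\P(B>t)\sim c' t^{-3/2}$: the number of peeling sub-steps per block has exponential tails, so the event $\{B>t\}$ is dominated by a single large swallow, whose probability is of order $\sum_{i>t}p_i \sim t^{-3/2}$. The identity $\E(B)=1$ is a translation-invariant mass transport: every edge of $S_0$ lies in the lower boundary of exactly one block, so $\E(B)$ equals the ratio of the asymptotic density of $S_0$ edges consumed per peeling step (which is $1/3$ by a direct computation from \eqref{eq:crit_pik}) to the density of newly completed blocks per peeling step (which is also $1/3$ by $Y_n/n\to\E\xi=1/3$ from the proof of \cref{L:H_i_law}).

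Finally, for the root block in part~(1), the root is a specific edge of $S_0$ and the blocks partition $S_0$ into their lower boundaries; hence the block containing the root is the size-biased pick among blocks with weight $B$, and conditional on this block, the root edge is uniform among its lower boundary edges by translation invariance of $H$. The principal difficulty I foresee is making precise the combinatorial bijection between peeling sub-steps and blocks, in particular verifying that the sub-regions cut off within a single block's exposure combine into one connected hole whose filling is a single Boltzmann $(B+2)$-gon rather than several independent sub-polygons. Once this bijection is settled, the distributional claims reduce to the explicit peeling formulas of \eqref{eq:crit_pik} and the domain Markov property of \cref{lem:prob}.
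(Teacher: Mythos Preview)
Your peeling-based approach is plausible but differs substantially from the paper's, and the difficulty you flag at the end is precisely what the paper's method avoids.

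The paper does not peel block by block. Instead it fixes a finite configuration of consecutive blocks $(B_j,\dots,B_k)$ with prescribed lower boundary lengths $b_i$ and hole volumes $v_i$ (and a marked edge on $B_0$), and computes the probability of this event in one shot using the explicit formula of \cref{lem:prob}: a block has $2v_i+b_i+1$ faces and contributes $v_i+1$ internal vertices, so the probability factorises as $C\prod_i \tfrac{2}{3}(2/27)^{v_i}9^{-b_i}$. Independence of the blocks, the Boltzmann law of each hole conditioned on $b_i$, the tail $\P(B=m)\propto \sum_n \phi_{n,m}(2/27)^n 9^{-m}\sim c m^{-5/2}$, and the size-biasing of $B_0$ all drop out of this product form simultaneously. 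No sequential exploration, no gluing of sub-polygons, no need to recognise when a block is ``complete''.

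Your route would work, but two points need real care. First, blocks are indexed by edges of $S_1$, not $S_0$, so peeling upward from $S_0$ does not naturally stop at block boundaries; you would have to specify a stopping rule that detects completion of a block without looking into the future (e.g.\ by instead peeling downward from $S_1$ once the layer is known, or by identifying which newly exposed internal vertices survive as $S_1$ vertices). Second, the gluing issue you raise is genuine: when a hole is assembled from several peeling cut-offs, showing the result is a single Boltzmann $(B+2)$-gon requires the Markov/gluing property of critical Boltzmann triangulations, which is true but is an extra lemma. The paper's direct computation via \cref{lem:prob} sidesteps both issues and is considerably shorter.

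Your argument for $\E(B)=1$ via matching densities is correct (and the paper does not spell this out); a slightly cleaner phrasing is that blocks are in bijection with edges of $S_1$ and their lower boundaries partition $S_0$, so by translation invariance the expected number of $S_0$-edges per $S_1$-edge is~$1$.
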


We remark that the proof yields the precise distribution of the lower
boundary size of a block in terms of the partition function of
triangulations, which is explicitly known.  We do not need the formula for
this distribution.

\begin{proof}
 We enumerate the blocks $\{B_i\}_{i \in \Z}$  using integers with $B_0$ 
being the block containing the root edge. Consider a sequence of blocks 
$(B_i)_{i\in[j,k]  \cap \Z}$ with $j\leq 0\leq k$.
  Suppose $B_i$ has $b_i$ lower boundary edges and $v_i$ vertices in its
  hole.  Let $B_0$ also have a marked edge on its lower boundary.  We
  compute the probability that these are consecutive blocks of $L_1$, with
  the marked edge of $B_0$ being the root edge.  A block has $2v_i + b_i +
  1$ faces.  Joining these blocks, the total number of vertices internal to
  $M$ is $V = 1+\sum v_i+1$, including also the upper boundary vertices.
  Letting $F=\sum 2v_i+b_i+1$, by \cref{lem:prob}, the probability of these
  blocks being part of the map is $6^V 9^{-F}$.

  In order for these to be blocks in $L_1$, it is also necessary that if we
  peel along the boundary to the right than no internal vertex (revealed so 
far) is swallowed,
  and that to the left the first step reveals an internal vertex, and
  afterwards no additional internal vertex is swallowed.  These have
  probability $1/3$ and $2/27$ respectively, which are just a constant for
  us.  Thus the probability of having the blocks $B_i$ is
  \[
  C \prod 6^{1+v_i} 9^{-2v_i-b_i-1} = 
  C \prod \frac23 \left(\frac{2}{27}\right)^{v_i} 9^{-b_i}.
  \]
  for some absolute constant $C$.  (Careful calculation shows $C=1$;
  However, we need not worry about the value of
  $C$, since it is  determined by the fact that these probabilities add up
  to $1$, and its value is canceled out in what follows.)
  This shows that the blocks are independent, that given $b_i$, and that
  the hole is filled with a Boltzmann triangulation with parameter $2/27$.
  Moreover, the probability that $b_i=m$ is proportional to $\sum_n
  \phi_{n,m} (2/27)^n 9^{-m}$, which decays as $ct^{-5/2}$ via 
\eqref{eq:crit_pik}.  Finally, for
  $B_0$ there is a marked edge on the lower boundary, so the probability of
  it having $b_0=m$ is proportional to $\sum_n m \phi_{n,m} (2/27)^n
  9^{-m}$, i.e.\ it is a biased by its lower boundary. That the root is
distributed uniformly among the vertices in the lower boundary of its
block follows from translation invariance.
\end{proof}

We are going to denote by $\B$ the \textbf{law of a block} described in
\cref{prop:L0} and by $\Bb$ the law of the block containing the root (i.e.\
biased by the size of its lower boundary).  Let $D_{\max}$ denote the
maximal degree of 
a vertex in a block.  The following shall come as no
surprise to the reader familiar with random maps.

\begin{lemma}\label{lem:Dmax}
  Then for some $c,C>0$ and all $r \ge 1$, $\B(D_{\max} > r)
  \le C e^{-cr}$, and $\Bb(D_{\max} > r) \le C e^{-cr}$.
\end{lemma}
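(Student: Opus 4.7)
The plan is to reduce the problem to controlling vertex degrees in the hole of a block---which by \cref{prop:L0} is a critical Boltzmann triangulation of a polygon---and then integrate the resulting conditional bound against the polynomial tail of the lower boundary length $B$.

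First I would observe that a block contains exactly one apex face (a triangle) together with the hole and at most two side edges shared with neighbouring blocks. Hence every vertex of the block has its in-block degree bounded by its degree inside the hole plus a constant, so $D_{\max} \leq D_{\max}^{\mathrm{hole}}+O(1)$ and it suffices to obtain an exponential tail on the latter.

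Next I would prove the key technical estimate: if $T$ is a Boltzmann triangulation of an $m$-gon with parameter $2/27$, then for every vertex $v$ of $T$,
\[
\P\bigl(\deg_T(v)\geq r\bigr)\leq C_0 e^{-c_0 r}
\]
uniformly in $m$. The argument is via peeling the fan of faces around $v$. If $v$ lies on the boundary of the polygon, peel the faces incident to $v$ one at a time from inside; at each step the revealed face either exposes a new neighbour of $v$ (extending the fan) or severs $v$ from one side (terminating the exposure), and by the spatial Markov property of Boltzmann triangulations the unexplored region is again a Boltzmann triangulation of a polygon with parameter $2/27$. The peeling law at the critical parameter assigns probability at least some $p>0$, uniform in $m$ and in the residual polygon, to the terminating event, so the fan length is stochastically dominated by a geometric random variable. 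For interior $v$, first peel from the polygon boundary (using \cref{prop:L0}-style peeling) until $v$ is exposed on the unexplored boundary, then apply the boundary case.

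A standard generating-function computation with $Z_m(2/27)$ yields $\E[\#\{\text{vertices in }T\}\mid m]\leq C m^\alpha$ for some fixed $\alpha<\infty$, so a union bound gives
\[
\P\bigl(D_{\max}^{\mathrm{hole}}\geq r \mid B=b\bigr) \leq \min\bigl(1,\; C b^\alpha e^{-c_0 r}\bigr).
\]
Using $\P(B=b)\leq Cb^{-5/2}$ from \cref{prop:L0} and splitting the sum at $b^*$ with $(b^*)^\alpha e^{-c_0 r}=1$ gives $\B(D_{\max}>r)\leq C'e^{-c' r}$. Under $\Bb$ the size-biased tail is $\P(B=b)\propto b^{-3/2}$, but $\sum b^{-3/2}<\infty$ and the same splitting still produces an exponential bound (with worse constants).

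The main obstacle is the uniform-in-$m$ exponential bound for a fixed vertex's degree in a critical Boltzmann triangulation of a polygon. At the critical parameter $2/27$ the peeling transition probabilities match those used in the HUIPT (see \eqref{eq:crit_pik}, \eqref{eq:xi}) and are bounded below on each step independently of the residual polygon length; this uniformity is what ensures the geometric domination and ultimately that every estimate is polynomial in $m$.
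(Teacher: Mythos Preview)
Your proposal is correct and follows essentially the same route as the paper. The paper likewise reduces to controlling degrees in the Boltzmann hole, proves a uniform-in-$m$ exponential tail for the degree of a single (boundary) vertex via peeling (its \cref{lem:deg_boundary}), upgrades this to $\P(D_{\max}>r)\le Cm^2 e^{-cr}$ by a union bound using $\E|B|=O(m^2)$ (its \cref{lem:deg_given_boundary}), and then absorbs the polynomial factor into the exponential by using the polynomial tail of $B$. The only cosmetic differences are that the paper handles interior vertices by applying the boundary bound at the peeling step where each vertex first appears (rather than ``peel until $v$ is exposed''), and that the paper's final step simply truncates at $m\le e^{\eps r}$ with $\eps=c/3$ instead of splitting the sum at $b^*$; both accomplish the same thing.
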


The proof follows a fairly standard argument, and is not too difficult,
however, we have not been able to locate this statement in the literature.
The proof is separated
into three steps.  The first is known lemma about the degree of a boundary
vertex in a Boltzmann triangulation.

\begin{lemma}\label{lem:deg_boundary}
  There are constants $c,C$ such that for any $m$, if $B$ is a Boltzmann
  triangulation of an $m$-gon, rooted at $\rho\in\partial B$, then
  $\P(d_\rho > r) \leq C e^{-cr}$.
\end{lemma}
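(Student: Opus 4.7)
The plan is to peel faces incident to $\rho$ one at a time, using the Markov property of the Boltzmann law to reduce the degree tail to a geometric tail on the number of peel steps. Let $e_L, e_R$ be the two boundary edges of $\rho$ with other endpoints $u_L, v_R$, and take $e_L$ to be the root edge. First peel $e_L$, revealing the incident triangle $f_1$ with third vertex $v_1$. If $v_1 = v_R$, then $\rho$'s neighborhood is already completed and $d_\rho = 2$; otherwise $\rho v_1$ is a new edge at $\rho$ and I continue by peeling it, always working in the connected component of the unrevealed map that still contains $e_R$ (the other component, if any, is irrelevant for the subsequent rotations around $\rho$). After $\tau$ such rotations the neighborhood of $\rho$ closes, and $d_\rho = \tau + 1$.

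By the Markov property of Boltzmann triangulations at weight $q = 2/27$, after each peel step, conditionally on the history, the component containing $e_R$ is itself a Boltzmann triangulation of an $m_{\mathrm{cur}}$-gon, where $m_{\mathrm{cur}}$ is a random boundary length evolving according to the peel outcomes. The next peel step terminates precisely when it reveals the single triangle $u_{\mathrm{cur}} \rho v_R$ (with $u_{\mathrm{cur}}$ the other endpoint of the current peel edge), whose removal leaves a Boltzmann $(m_{\mathrm{cur}} - 1)$-gon. Since this triangle contributes no internal vertex, the conditional closing probability equals
\[
\frac{Z_{m_{\mathrm{cur}} - 1}(2/27)}{Z_{m_{\mathrm{cur}}}(2/27)}.
\]
Granting a uniform lower bound $Z_{m-1}/Z_m \ge c > 0$ for all $m \ge 2$, the number of peel steps $\tau$ is stochastically dominated by a geometric random variable with parameter $c$, yielding $\P(d_\rho > r) \le (1-c)^{r-1}$ uniformly in $m$.

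The main obstacle is thus the verification of the uniform ratio bound, i.e.\ that $Z_m(2/27)$ grows at most exponentially in $m$. This is a standard fact for critical Boltzmann triangulations: the explicit formulas for $\phi_{n,m}$ give the asymptotic $Z_m(2/27) \sim c\, m^{-5/2}\, \alpha^m$ for an explicit $\alpha > 0$, so $Z_{m-1}/Z_m \to 1/\alpha > 0$. Combined with the trivial positivity of $Z_m$ for every finite $m \ge 2$, the infimum over $m \ge 2$ is strictly positive, supplying the required constant $c$ and completing the argument.
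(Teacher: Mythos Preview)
Your argument is correct and is precisely the peeling-around-the-root argument from \cite{UIPT1} that the paper invokes; the paper's own proof consists solely of that citation. One minor caveat worth patching: in the type~II setting one has $Z_1=0$, so your one-step closing bound $Z_{m_{\mathrm{cur}}-1}/Z_{m_{\mathrm{cur}}}$ vanishes when $m_{\mathrm{cur}}=2$ (which can occur), but from a $2$-gon the next peel is forced to reveal an internal vertex (a two-vertex triangular face would require a self-loop), hence one deterministically moves to $m_{\mathrm{cur}}=3$ and the geometric domination goes through with adjusted constants.
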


\begin{proof}
  This follows from the same argument for exponential degree
  distribution in the UIPT from \cite{UIPT1}.
\end{proof}

\begin{lemma}\label{lem:deg_given_boundary}
  There are constants $c,C$ such that for any $m$, if $B$ is a Boltzmann
  triangulation of an $m$-gon, rooted at $\rho \in \partial B$, and
  $D_{\max}$ the maximal degree of any internal vertex, then
  $\P(D_{\max} > r) \le C m^2 e^{-cr}$.
\end{lemma}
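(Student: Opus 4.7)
The plan is to use a first-moment bound over internal vertices together with the peeling exploration of a Boltzmann triangulation. Let $N$ be the number of internal vertices in $B$ and let $X$ be the number of those with degree exceeding $r$. By the first moment method $\P(D_{\max} > r) \le \E X$, so it suffices to show $\E X \le C m^2 e^{-cr}$. The argument factors into two pieces: a uniform exponential tail on the degree of \emph{any single} internal vertex, and a polynomial volume bound $\E N \le C' m^2$.

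For the first piece I enumerate internal vertices $v_1, v_2, \dots$ in the order in which they are first revealed by a peeling exploration of $B$ initiated on $\partial B$. Boltzmann triangulations enjoy a domain Markov property inherited from \cref{lem:prob}: after any finite prefix of peeling steps, the unrevealed region, conditioned on its perimeter, is again a Boltzmann triangulation of a polygon with weight $2/27$. When a peeling step first reveals an internal vertex $v_i$, the revealing triangle contributes to $v_i$ exactly the two edges joining it to its neighbors on the boundary of the new unrevealed region, and every subsequent face of $B$ incident to $v_i$ lies entirely inside that unrevealed region. Hence the degree of $v_i$ in $B$ equals its degree as a boundary vertex of the unrevealed Boltzmann triangulation, which by \cref{lem:deg_boundary} has exponential tail with constants independent of the perimeter. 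Summing the resulting conditional bound over peeling steps,
\[
\E X = \sum_{i \ge 1} \P(v_i \text{ is revealed and } d_{v_i} > r) \le C e^{-cr} \E N.
\]

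It remains to establish the volume estimate $\E N \le C' m^2$. Writing $\E N = q Z_m'(q)/Z_m(q)$ at the critical weight $q = 2/27$, this reduces to standard asymptotics of the map-enumeration coefficients $\phi_{n,m}$ (the tail $\phi_{n,m} \sim c_m (27/2)^n n^{-5/2}$ together with a polynomial estimate on $c_m$); geometrically, the Brownian disk scaling limit of critical Boltzmann triangulations of an $m$-gon has area of order $m^2$. Combining the two estimates yields $\P(D_{\max} > r) \le C m^2 e^{-cr}$. I expect the main obstacle to be precisely this volume bound: the peeling/domain-Markov reduction of internal degrees to boundary degrees is conceptually clean, but $\E N = O(m^2)$ requires partition function asymptotics not spelled out in the excerpt, and is likely the source of the specific $m^2$ prefactor in the statement.
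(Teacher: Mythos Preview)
Your approach is essentially the same as the paper's: both peel to reveal internal vertices one at a time, observe that a freshly revealed internal vertex sits on the boundary of a conditionally Boltzmann unrevealed component so that \cref{lem:deg_boundary} applies uniformly, and then sum to obtain $Ce^{-cr}$ times the expected number of internal vertices. For the volume bound the paper simply cites \cite[Proposition~5.1]{UIPT1} for $\E|B| = O(m^2)$, confirming your suspicion that this is the source of the $m^2$ prefactor; one minor refinement is that the paper explicitly notes the unrevealed region may split into several independent Boltzmann polygons (exploring them one after another), though this does not affect the step you need since a newly revealed vertex always lies on the boundary of a single such component.
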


\begin{proof}
  We perform a peeling process to reveal $B$, each time peeling at some
  edge and revealing one face.  However, if the revealed face separates the
  map into two sub-maps, we do not reveal either of them immediately, but
  proceed to explore one and then the other in some arbitrary order.  Thus
  at time $i$ we have revealed $i$ faces, and the remainder of $B$ is a
  collection of independent Boltzmann maps of some cycles (unless the
  process has terminated, in which case there is no complement).

  Let $\mathcal F_i$ be the sigma algebra generated by this peeling process
  up to the $i$th step.  Let $A_i$ be the event that a new vertex is
  revealed at step $i$, and let $A_{i,r}\subset A_i$ be the event that this
  vertex has degree greater than $r$.  Our goal is to bound $\P(\cup_i
  A_{i,r})$:
  \begin{align*}
    \P(D_{\max} > r) & \le \sum_i \P(A_{i,r}) \\
    & \le \sum_i \E \left[ \P(A_{i,r} | \cF_i) \right] 
    \le \sum_i \E \left[ \mathbbm{1}_{A_i} \P(d_i>r | \cF_i) \right]\nonumber
  \end{align*}
  since $A_i\in\cF_i$, where $d_i$ is the degree of the vertex revealed at
  step $i$.  When a new vertex is revealed, its degree is $2$.  Conditioned
  on $\cF_i$, the component of vertex $i$ is filled with a Boltzmann map,
  and so by \cref{lem:deg_boundary}, we have $\P(d_i>r | \cF_i) \leq C
  e^{-cr}$.  Thus we have
  \begin{align*}
    \P(D_{\max} > r)
    & \le \sum_i \E \mathbbm{1}_{A_i} C e^{-cr} \\
    & = C e^{-cr} \E |B|  \le  C m^2 e^{-cr},
  \end{align*}
  Where $|B|$ is the number of vertices in $B$, which is known to have
  expectation of order $m^2$ (see \cite[Proposition 5.1]{UIPT1}).
\end{proof}

\begin{proof}[Proof of \cref{lem:Dmax}]
  We know from the second item in \cref{prop:L0} that the probability that
  the boundary of a Boltzmann map of law $\B$ or $\Bb$ is larger than
  $e^{\eps r}$ is exponentially small.  The rest follows from
  \cref{lem:deg_given_boundary} for $\eps=c/3$, with the $c$ from
  \cref{lem:deg_given_boundary}.
\end{proof}

\subsection{Full plane extension of $H$}\label{sec:full-plane-extension}

Given the layer decomposition of the half plane UIPT, we now construct a
plane triangulation $M$ with no boundary which contains $H$ as a sub-map.
Eventually we will also show that $M$ is almost surely recurrent, implying
\cref{thm:main}.

To construct $M$ start with the half plane UIPT $H = \bigcup_{i\geq1} L_i$.
We add a sequence of layers below the boundary $S_0$, to create a full plane map.
For each $i\le 0$, the layer
$L_i$ is composed of a doubly infinite sequence of i.i.d.\ blocks with law
$\B$, attached to form a layer.  Note that there is no size biased block in
$L_i$ for $i\le 0$.  Thus for $i\leq 0$, if $L_i$ is rooted at some vertex
on the top 
boundary, it is translation invariant in law.  We then identify the top
boundary of $L_i$ with the bottom boundary of $L_{i+1}$ for every
$i\le 0$.  The full plane map is $M = \bigcup_{i\in\Z} L_i$.
By translation invariance of the lower layers, the law of the resulting
full plane map does not depend on which edge in the bottom boundary of
$L_{i+1}$ is identified with the root of $L_i$.
The boundary between $L_{i+1}$ and $L_{i}$ is denoted $S_i$ also for $i<0$.
Vertices of $S_i$ for any $i$ are also called skeleton vertices.
The map $M$ is rooted at the
root $\rho$ of $H$.
\cref{thm:main} is an immediate consequence of the following.

\begin{thm}\label{thm:M_rec}
  The full plane extension $M$ is almost surely recurrent.
\end{thm}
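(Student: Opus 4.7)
The plan is to realize a suitably re-rooted version of $M$ as the Benjamini--Schramm limit of finite planar triangulations with a uniformly chosen root over skeleton vertices, and then to push through the circle-packing argument of \cite{GN12} using the skeleton decomposition. Since recurrence is a property of the underlying unrooted graph, recurrence of any rooted version of $M$ suffices to establish \cref{thm:M_rec}.

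For the finite approximations, I would construct $M_n$ by truncating $M$ to $2n+1$ consecutive layers $L_{-n+1},\ldots,L_n$, restricting each layer to a cyclic strip of $N=N(n)$ blocks, and capping the top and the bottom by triangulated disks. By \cref{L:H_i_law} and \cref{prop:L0}, the layers are i.i.d.\ up to the presence of a size-biased block in layers above the original boundary (which is localised and irrelevant in the limit), and blocks within a layer are i.i.d.\ with law $\B$; for $N\gg n$, a uniformly chosen skeleton vertex lies with probability $1-o(1)$ in a non-root block of a middle layer, far from both caps and the cyclic gluing. Translation invariance within each layer together with exchangeability of layers then implies that $M_n$ rooted at a uniform skeleton vertex converges in the local topology to a specific re-rooted version of $M$, namely the stationary and reversible version referred to in the introduction. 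This is the content of \cref{sec:convergence}.

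The degree of this re-rooted root has an exponential tail: a skeleton vertex is incident to a bounded number of blocks, and its degree within each block is at most $D_{\max}$, which by \cref{lem:Dmax} has exponential tail under both $\B$ and $\Bb$. One might hope to conclude recurrence directly from \cite{GN12}; however, their theorem demands rooting uniformly over \emph{all} vertices, while our approximations root uniformly over skeleton vertices only. These two roots are not mutually absolutely continuous in the limit because, by the $t^{-3/2}$ tail on $B$ in \cref{prop:L0}, the expected number of interior vertices in a block is infinite: a typical block's hole is a Boltzmann triangulation containing on the order of $B^2$ vertices, and $\E[B^2]$ diverges. Hence uniform rooting over all vertices would be dominated by rare, very large blocks and would yield a radically different limit, so no direct black-box application of \cite{GN12} is available.

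The main obstacle is therefore to push through the circle-packing argument of \cite{GN12} with the root restricted to the skeleton. I would circle pack $M_n$ (after first discarding the vertices of degree above a threshold $K$, an event of exponentially small probability by \cref{lem:Dmax}), apply the Ring Lemma to control radius ratios between adjacent retained circles, and invoke \cref{lem:delta_s} on the circles centred at skeleton vertices. The key geometric fact to exploit is that the skeleton paths $S_i$ are non-crossing simple curves and their circles can accumulate nowhere in the plane; combined with new estimates on the volume of balls in $M$ under a skeleton-based modified metric (referenced in the outline), this should produce the effective resistance lower bound needed to conclude recurrence. The delicate interplay between the Euclidean geometry supplied by the circle packing and the combinatorial layered structure is where the bulk of the remaining work will lie.
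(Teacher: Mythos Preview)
Your overall strategy matches the paper's: approximate $M$ by finite planar maps rooted uniformly over skeleton vertices, circle pack, apply \cref{lem:delta_s} to the centres of \emph{skeleton} circles only, and convert the resulting Euclidean control into a resistance lower bound. You have also correctly identified the central obstruction, namely that uniform rooting over all vertices is unavailable because $\E[B^2]=\infty$.

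However, there is a genuine gap in how you handle unbounded degrees. You propose to ``discard the vertices of degree above a threshold $K$, an event of exponentially small probability by \cref{lem:Dmax}''. But \cref{lem:Dmax} bounds the tail of $D_{\max}$ in a \emph{single} block; a finite approximation $M_n$ contains polynomially many blocks, so the maximal degree in $M_n$ is not bounded but grows like $\log n$ (this is exactly the content of \cref{lem:DmaxM}). Discarding all vertices of degree exceeding a fixed $K$ is therefore not a negligible modification, and in any case removing vertices does not obviously preserve recurrence in the right direction. More seriously, without a genuine uniform degree bound the Ring Lemma gives no uniform control on radii ratios, and your subsequent geometric argument (non-accumulation of skeleton circles, etc.) has no starting point.

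The paper resolves this by the star--tree transformation (\cref{sec:surgery}): each vertex of degree $d$ is replaced by a balanced binary tree, producing a bounded-degree triangulation $M'_n$ which can be circle-packed with uniform Ring Lemma constants; the original degree $d$ survives as a \emph{conductance} on the tree edges. The argument then splits cleanly: \cref{L:high_resistance} gives resistance $\geq c_1\log r$ across a Euclidean annulus for \emph{unit} conductances, while \cref{L:max_conductance} uses the skeleton volume bound \cref{P:volume_skel} together with \cref{lem:DmaxM} to show the maximal conductance in $\bar\Gamma(n,r)$ is $O(\log r)$. Rayleigh monotonicity then yields a uniform positive lower bound on the weighted resistance (\cref{P:const_res}), and \cref{L:equivalent} transfers recurrence back to $M$. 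Your sketch is missing this entire mechanism; once you insert it (and replace ``discard high-degree vertices'' by ``star--tree transform''), the rest of your outline is essentially the paper's proof.
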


Let $S = \bigcup S_i$ be the skeleton of the map $(M,\rho)$.  We define a
graph on $S$, where two vertices $x,y\in S$ are adjacent if they are both
incident to some hole in some layer of $M$.  Call this graph $\Sk(M)$.
Note that adjacent vertices in any $S_i$ are adjacent in $\Sk(M)$, and that
neighbours in $\Sk(M)$ are either both in $S_i$ for some $i$ or in $S_i$ and
$S_{i+1}$ for some $i$.  Note that $\Sk(M)$ does not have a natural map
structure. However, it consists of finite cliques, and the intersection
graph between the cliques is planar and approximates $M$ in some ways.  (We
do not need this, and do not make this precise here.) The nonempty
blocks in $M$ corresponds naturally to blocks in $\Sk(M)$ with the
vertices in the boundary of a block forming a clique (we will keep
referring to them as blocks in $\Sk(M)$) We shall use the
notation $\Sk(A)$ to denote the corresponding graph also for various
$A \subset M$, which will be the subgraph of $\Sk(M)$ induced by vertices
of $A$.

Our immediate goal is to show that $\Sk(M)$ has polynomial volume growth.
Let $B_{sk}(r)$ denote the combinatorial ball of radius $r$ around
$\rho$ of $\Sk(M)$ along with all the finite components of its complement.

\begin{prop}\label{P:volume_skel}
  The random variables $r^{-4} | B_{sk}(r) |$ form a tight family.
\end{prop}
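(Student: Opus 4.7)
My plan is to exploit the layered structure of $M$ to bound $|B_{sk}(r)|$ by a power of $r$ strictly less than $4$. Since every edge of $\Sk(M)$ joins two vertices in a common layer $S_i$ or in adjacent layers $S_i, S_{i\pm 1}$, the ball $B_{sk}(r)$ meets at most $2r+1$ consecutive layers around the root's layer. Furthermore, any two consecutive vertices in $S_i$ share a common block (the block in $L_i$ whose upper boundary is their common edge) and are hence $\Sk(M)$-adjacent, so $B_{sk}(r) \cap S_i$ is a single finite arc.

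To bound the horizontal span of this arc I would observe that each step of a path in $\Sk(M)$ moves inside the skeleton-clique of a single block, displacing horizontally at most by that block's lower-boundary length. Hence $B_{sk}(r) \cap S_i$ has length at most $\sum_{k=1}^{r} B_k$, a sum of at most $r$ i.i.d.\ copies of the block lower-boundary size from \cref{prop:L0} (the size-biased root block contributes only a bounded multiplicative factor). Since $\E B = 1$, Markov's inequality combined with a union bound over the $O(r)$ visited layers yields a polynomial bound (well below $r^4$) on the total number of skeleton vertices in $B_{sk}(r)$.

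For the non-skeleton vertices added by filling in the complement, I would note that these lie in the interiors of the blocks whose boundary is included in the ball. By \cref{prop:L0}(3), a block with lower-boundary size $b$ is a Boltzmann $(b+2)$-gon of expected size $O(b^2)$, using \cite[Proposition~5.1]{UIPT1}. The total non-skeleton contribution is therefore $\sum_k B_k^2$ summed over all blocks meeting the ball. Using the tail $\P(B>t) \sim c t^{-3/2}$ and the fact that the blocks are globally i.i.d.\ across layers (by \cref{L:H_i_law} and \cref{prop:L0}), I would control this sum via stable-law concentration (the random variable $B^2$ has stable index $3/4$) to obtain a bound that is still a power of $r$ strictly less than $4$, and in particular tight at the scale $r^4$.

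The main obstacle is the heavy-tailed block-size distribution, which has finite mean but infinite second moment; this rules out a direct second-moment argument for $\sum B_k^2$, and crude per-layer union bounds fall slightly short of the $r^4$ target if pushed naively. The resolution is to control the skeleton span by Markov's inequality using only $\E B < \infty$, and to control $\sum B_k^2$ globally (over all blocks in the ball at once, rather than layer-by-layer) via the $3/4$-stable scaling, which fits comfortably inside the generous $r^4$ budget.
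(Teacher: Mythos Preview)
Your horizontal-span bound $|B_{sk}(r)\cap S_i|\le\sum_{k=1}^r B_k$ with $B_k$ i.i.d.\ is incorrect, and this is the crux of the matter. The displacement estimate you give controls a \emph{single} path of length $r$, but $B_{sk}(r)$ is the union of all such paths, and these branch. Concretely, ascend from $\rho$ in $r/2$ steps to $p^{(r/2)}(\rho)\in S_{r/2}$; from there one may descend in $r/2$ further steps to \emph{any} of its generation-$r/2$ descendants in $S_0$, since a vertex and all of its children lie in a common hole and hence in a common $\Sk$-clique. By \cref{lem:Kri_tree,L:GW_vol} the number of such descendants is of order $r^2$, so already $|B_{sk}(r)\cap S_0|$ is at least of order $r^2$, contradicting an $O(r)$ span. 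Even along a single path the block sizes encountered are not an i.i.d.\ sample: at each down-step the path may choose among all children, so the greedy path's displacement is not a sum of $r$ typical blocks. The correct scale turns out to be of order $r^3$ per layer and $r^4$ in total, so there is no slack ``well below $r^4$'' as you suggest.

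The paper's argument is structurally different. It recentres at $p^{(r)}(\rho)\in S_r$ and bounds the ball of radius $2r$ there by increasing sets $P_i$ whose lateral boundaries are the first $i$ generations of the Galton--Watson trees below vertices $U_{\pm i}\in S_r$, where $U_i$ is the nearest vertex to the right whose tree survives to depth $i$. Since the survival probability to depth $i$ is of order $i^{-2}$ (\cref{L:GW_height}), one gets $\E U_{2r}=O(r^3)$; Wald's identity together with the $O(r)$ expected size of the first $2r$ levels of each critical tree, plus the $O(r^3)$ contribution of the size-biased tree at $U_0$ (\cref{L:GW_vol}), gives the $O(r^4)$ bound. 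As a side remark, $B_{sk}(r)\subset\Sk(M)$ contains only skeleton vertices, so your $\sum B_k^2$ computation for the Boltzmann fillings does not enter this proposition at all.
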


We start with a few additional definitions.  For any skeleton vertex $v \in
S_i$ we can associate a unique hole in the layer above it ($L_{i+1}$) that
contains the edge of 
$S_i$ to the right of $v$.  This hole is also incident to $S_{i+1}$ at a
unique vertex.  Call this vertex the \textbf{parent} $p(v)$ of $v$, and
define $p^{(r)}$ to be the $r$ fold composition of the operation $p$.
Equivalently, the parent of a vertex $v\in S_i$ is the rightmost vertex of
$S_{i+1}$ that is adjacent to $v$ in $\Sk(M)$.

Using \cref{prop:L0}, it is natural to study the maps via certain critical
Galton-Watson trees derived from the block decomposition.  A similar
construction was used by Krikun for the full plane UIPQ in
\cite{krikun2005local,kriUIPT}, except that the trees there are not
Galton-Watson trees.  In the layered map, we define a tree as follows.  The
vertices are the skeleton vertices.  The parent of $v$ is $p(v)$.  The set
of all offspring of a vertex $v$ form a tree, which we denote by $T_v$.
The following is clear from this discussion and \cref{prop:L0}.

\begin{lemma}\label{lem:Kri_tree}
  For every skeleton vertex $v\in S_0$, the tree $T_v$ is a critical Galton-Watson
  tree with offspring distribution $Z$ satisfying $\P(Z>k) \sim ck^{-3/2}$.
  Moreover, the law of the (infinite) tree rooted at $\rho$ is the fringe
  of the same Galton-Watson tree.
\end{lemma}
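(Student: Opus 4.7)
The plan is to deduce both claims directly from the block structure, combining \cref{prop:L0} for the law of a single block with \cref{L:H_i_law} for independence across layers. The key identification is the following: for $w \in S_i$, the children of $w$ under the parent map $p$ are exactly the lower-boundary vertices of the unique block of $L_i$ whose upper vertex is $w$, and in particular the child count of $w$ equals the lower boundary length of that block.

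For the first statement, fix $v \in S_0$. The tree $T_v$ lies entirely in the added layers $L_0, L_{-1}, \ldots$, where by the construction in \cref{sec:full-plane-extension} all blocks are i.i.d.\ with the standard (non size-biased) law $\B$. Independence of blocks within a single layer follows from the first part of \cref{prop:L0}, and independence of distinct layers from \cref{L:H_i_law}. Hence all child counts in $T_v$ are i.i.d.\ copies of the lower-boundary size $B$ of a $\B$-block, which by part (2) of \cref{prop:L0} satisfies $\E B = 1$ and $\P(B>k) \sim c k^{-3/2}$. This identifies $T_v$ as a critical Galton--Watson tree with the claimed offspring distribution.

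For the second statement, I would consider the tree on all skeleton vertices reachable from $\rho$ by repeatedly applying $p$ or $p^{-1}$, rooted at $\rho$. The ancestral chain $\rho, p(\rho), p^2(\rho), \ldots$ provides an infinite spine passing upward through $S_0, S_1, S_2, \ldots$. Using the freedom in \cref{L:H_i_law} to choose the root of $H_r$ as a function of $L_1,\ldots,L_r$, one can arrange recursively in $r$ that the root of $H_r$ sits at $p^r(\rho)$. Under this choice, the block of $L_r$ whose upper vertex is $p^r(\rho)$ is precisely the root-containing block of $H_{r-1}$, which has the size-biased law $\Bb$ by part (1) of \cref{prop:L0}; moreover, conditional on its lower-boundary length, the root edge --- equivalently, the next spine vertex $p^{r-1}(\rho)$ --- is uniform among the children of $p^r(\rho)$. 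All other children of spine vertices, together with their subsequent descendants, fall inside $\B$-distributed blocks and yield, by the same independence argument as above, independent standard critical Galton--Watson subtrees hanging off the spine. This is exactly the sin-tree (Kesten's tree) description of the fringe at $\rho$ of the GW tree with offspring law $Z$.

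The main obstacle will be the consistent identification of the spine $\{p^r(\rho)\}_r$ with the successive roots of $H_0, H_1, H_2, \ldots$. One must verify that the root of each $H_r$ can indeed be chosen as a function of $L_1,\ldots,L_r$ so as to coincide with $p^r(\rho)$, i.e., with the upper vertex of the root-containing block of $L_r$. Once this consistency is in place, the sin-tree structure is an immediate consequence of \cref{prop:L0} (for the size-biased child count with uniformly chosen continuation along the spine) and \cref{L:H_i_law} (for the independence of all off-spine subtrees).
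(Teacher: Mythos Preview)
Your proposal is correct and follows the paper's intended approach. The paper itself gives no proof at all: it simply states that the lemma ``is clear from this discussion and \cref{prop:L0},'' and your write-up is exactly the elaboration one would supply to justify that sentence --- identifying the offspring of $w\in S_i$ with the lower-boundary vertices of the block of $L_i$ sitting above it, and then reading off the sin-tree structure along the spine $\rho,p(\rho),p^{(2)}(\rho),\dots$ from the size-biased root block and uniform root position in \cref{prop:L0}.

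One small remark: for the first claim (the unconditioned tree $T_v$ with $v\in S_0$), the relevant layers are the \emph{added} layers $L_0,L_{-1},\dots$, whose blocks are i.i.d.\ with law $\B$ by construction of $M$ in \cref{sec:full-plane-extension}, not by \cref{L:H_i_law}. The latter is only needed for the layers $L_1,L_2,\dots$ inside $H$, i.e.\ for the spine part of the second claim. Also, the ``obstacle'' you flag is not really one: $p^{(r)}(\rho)$ is manifestly a measurable function of $L_1,\dots,L_r$, so rooting $H_r$ at the edge of $S_r$ to its right is a legitimate choice in \cref{L:H_i_law}, and by definition of $p$ the root block of $L_{r+1}$ then has upper vertex $p^{(r+1)}(\rho)$.
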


See Aldous \cite{fringe91} for the theory of fringes of trees.  We do not
need this theory in full generality, but use the following consequences:
\begin{itemize}[nosep]
\item Every ancestor $p^{(r)}(\rho)$ of $\rho$ has offspring distributed
  as size biased $Z$.
\item The previous ancestor $p^{(r-1)}(\rho)$ is a uniform child of its
  parent.
\item All other children of  $p^{(r)}(\rho)$ produce independent
  Galton-Watson trees of offspring.
\end{itemize}

While a-priori it is not obvious that our parent definition defines a
single tree and not a forest.  The connectivity can be deduced from the
criticality of the trees by showing that for any two vertices of $x,y \in
S_i$, $p^{(r)}(x) =p^{(r)}(y)$ for $r$ large enough.  This is
straightforward, but we do not need the connectivity for our purposes, so
omit details.  \cref{P:volume_skel} is an immediate consequence of the
following.

\begin{lemma}\label{lem:vol_from_top}
  Let $\widetilde B(2r)$ be the hull of the ball of radius $2r$ around
  $p^{(r)}(\rho)$ in $\Sk(M \setminus H_r)$, then $r^{-4} |\widetilde
  B(2r)|$ are tight.
\end{lemma}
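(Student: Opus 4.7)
The plan is to identify $\widetilde B(2r)$ with a portion of a Galton--Watson tree via the spine decomposition, and then apply first-moment bounds on a high-probability truncation event.

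First I would observe that in $\Sk(M\setminus H_r)$ the vertex $p^{(r)}(\rho)\in S_r$ is not adjacent to any other vertex of $S_r$: sibling adjacencies among $S_r$-vertices in $\Sk(M)$ come from shared blocks in $L_{r+1}\subset H_r$, which has been removed. Hence the connected component of $p^{(r)}(\rho)$ in $\Sk(M\setminus H_r)$ is exactly its descendant subtree in the layered tree of \cref{lem:Kri_tree}, and graph distance in this subtree equals tree depth, realized by straight descent along the target's ancestry line. So the radius-$2r$ ball is precisely the set of descendants at tree-depth at most $2r$, and $\widetilde B(2r)$ consists of this skeleton set together with the interiors of the blocks it bounds in $M$ and, at worst, the finite Galton--Watson tails past depth $2r$.

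Next I invoke the spine description: the subtree has a spine of length $r$ from $p^{(r)}(\rho)$ down to $\rho$, each spine vertex having size-biased offspring $\widetilde Z^{(k)}$ with $\P(\widetilde Z>t)\sim ct^{-1/2}$, one of which extends the spine and the rest starting independent critical Galton--Watson subtrees with offspring $Z$ (with $\P(Z>t)\sim ct^{-3/2}$). Since $\widetilde Z$ lies in the domain of attraction of a $1/2$-stable law, $r^{-2}\sum_{k\le r}\widetilde Z^{(k)}$ converges in distribution; in particular for any $\eps>0$ there is $K=K(\eps)$ with $\P(\sum_{k\le r}\widetilde Z^{(k)}>Kr^2)<\eps$ uniformly in $r$. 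Conditional on this event and using $\E|T|_h\le h+1$ for critical GW trees truncated at height $h$, the expected number of skeleton vertices in the ball is at most
\[
r+\sum_{k=0}^{r-1}(\widetilde Z^{(k)}-1)(2r-k)+r\le 3Kr^{3}.
\]

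To pass from the skeleton count to $|\widetilde B(2r)|$ I control the block interiors: by \cref{prop:L0} the hole of a block of lower boundary length $B$ is a Boltzmann triangulation of a $(B+2)$-gon with parameter $2/27$, whose expected size is $O(B^2)$ by \cite[Prop.~5.1]{UIPT1} (already invoked in the proof of \cref{lem:deg_given_boundary}). Since the lengths $B_i$ are iid (up to the negligible root bias) with tail $\P(B>t)\sim ct^{-3/2}$, the variables $B_i^{2}$ are iid in the domain of attraction of a $3/4$-stable law, so summed over the $\sim r^3$ blocks in the ball they give $\sum_i B_i^{2}=O(r^4)$ with high probability. A Markov inequality conditional on both the spine- and the boundary-truncation events then yields $|\widetilde B(2r)|\le Lr^4$ with probability at least $1-\eps$ for $L=L(\eps)$ large, proving tightness. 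The main obstacle is that $\widetilde Z$ and $B^2$ both have infinite mean, so the first-moment arguments must be carried out only on high-probability events where these heavy tails are tame; the scale $r^4$ arises precisely from the $3/4$-stable scaling of $\sum B_i^{2}$ over the $\sim r^3$ blocks in the ball, reflecting the dominance of block interiors over skeleton vertices in the volume.
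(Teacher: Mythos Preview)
Your argument has a genuine gap at the first step. You claim that the connected component of $p^{(r)}(\rho)$ in $\Sk(M\setminus H_r)$ is exactly its descendant subtree, and hence that the radius-$2r$ ball is precisely the set of descendants at depth $\le 2r$. This is false. Even granting that vertices of $S_r$ lose their mutual adjacencies once $L_{r+1}$ is removed, vertices in $S_{r-1}$ (and every lower level) remain adjacent to one another via holes in $L_r\subset M\setminus H_r$. As the paper records just before this lemma, the lower boundary of a hole is an interval of children \emph{together with the first vertex of the next interval}, so in $\Sk(M\setminus H_r)$ one may step from a child of $p^{(r)}(\rho)$ laterally in $S_{r-1}$ to a child of the neighbouring $S_r$-vertex, and then back up to that neighbour. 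The ball therefore spreads sideways through $S_r$ (and all lower levels), far beyond the descendant tree; controlling this lateral spread is exactly what the paper's construction with the vertices $U_{-i},\dots,U_i$ and their surviving subtrees is designed to do.

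A second, related error is in the interpretation of $|\widetilde B(2r)|$. Since $\widetilde B(2r)$ is the hull of a ball in the graph $\Sk(M\setminus H_r)$, its cardinality counts skeleton vertices only; block interiors play no role here. Your block-interior computation (the $3/4$-stable argument for $\sum B_i^2$) is therefore aimed at the wrong quantity. In the paper's proof the $r^4$ scale arises entirely from skeleton vertices: roughly $r^3$ top-level vertices between $U_{-2r}$ and $U_{2r}$ (because the gaps $U_i-U_{i-1}$ are geometric with mean of order $i^2$, via \cref{L:GW_height}), each carrying a critical tree of expected truncated size $O(r)$. Your estimate of $O(r^3)$ skeleton vertices is correct for the descendant tree of $p^{(r)}(\rho)$ alone, but that tree is a strict subset of the ball.
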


Note that $M \setminus H_r$ is the half plane map consisting of $L_i$ for
$i\leq r$, and thus we are considering a ball in the skeleton of this map,
centred at a boundary vertex.

\begin{proof}[Proof of \cref{P:volume_skel}]
  The ball $B_{sk}(r)$ is contained in layers $L_{-r},\dots,L_r$, since the
  $\Sk$-distance from $\rho$ to $p^{(r)}(\rho)$ is $r$, we have that
  $B_{sk}(r) \subset \widetilde B(2r)$.
\end{proof}

For the proof of \cref{lem:vol_from_top}, we require the following standard
estimate regarding survival probabilities for Galton-Watson trees.  Note
that the offspring distribution in $T_v$ has infinite second moment, so the
survival probabilities do not decay as $c/n$ and the volume up to level
$n$, conditioned on survival to that level is not quadratic.  Recall that
it is possible to obtain an infinite version of a critical Galton-Watson
tree by conditioning it to survive up to generation $n$ and then taking the
limit as $n \to \infty$ in the local weak topology. Moreover such an
infinite tree has a single infinite path -- the spine -- and finite trees
attached to it.  The tree can be described as follows. The root has an
offspring distribution which is the size biased version of the original
offspring distribution. A uniformly picked child $v$ has a tree conditioned
to survive below it, while all its siblings have unconditioned
Galton-Watson trees of descendants.  We refer to \cite{kestensubdiff} for a
detailed account of Galton-Watson trees conditioned to survive.  The two
following lemmas are standard.  Proofs can be found e.g.\ in \cite{KC08}.

\begin{lemma}\label{L:GW_height}
  Let $T$ be a critical Galton-Watson tree with offspring distribution
  satisfying $\P(Z>k) \sim c k^{-3/2}$. Then the probability that $T$
  survives to generation $n$ decays as $cn^{-2}$ for some $c$. 
\end{lemma}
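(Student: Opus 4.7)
The plan is to work with the probability generating function $f(s) = \E s^Z$ of the offspring distribution and analyze the recursion for the survival probability. Let $q_n = \P(T\text{ survives to generation }n)$. Since $q_n = 1 - f_n(0)$ where $f_n$ is the $n$-fold iterate of $f$, and since the root of the tree has one offspring tree, we get the recursion
\[
  q_{n+1} = 1 - f(1-q_n), \qquad q_0 = 1.
\]
Criticality of $T$ ($\E Z = 1$) together with the heavy tail $\P(Z>k) \sim c k^{-3/2}$ imply $q_n \to 0$, by the classical extinction theorem.

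The next step is to translate the tail assumption into an expansion of $f$ near $s=1$. A direct computation gives $\frac{1-f(s)}{1-s} = \sum_{j\ge 0} \P(Z>j)\, s^j =: g(s)$, with $g(1)=\E Z = 1$. Applying a Karamata Tauberian argument to $g$ (the relevant sum $\sum_j j^{-3/2}(1-s^j)$ is of order $(1-s)^{1/2}$), one obtains $1 - g(1-u) \sim c''\,u^{1/2}$ as $u\to 0^+$, hence
\[
  1 - f(1-u) \;=\; u \;-\; c'\, u^{3/2} \;+\; o\bigl(u^{3/2}\bigr)
  \qquad (u\to 0^+),
\]
for some $c'>0$ determined by the tail constant $c$.

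Substituting into the recursion gives $q_{n+1} = q_n - c'\, q_n^{3/2}(1+o(1))$. To extract the polynomial rate, set $a_n = q_n^{-1/2}$; expanding
\[
  a_{n+1} \;=\; q_n^{-1/2}\bigl(1 - c'\,q_n^{1/2} + o(q_n^{1/2})\bigr)^{-1/2}
  \;=\; a_n + \tfrac{c'}{2} + o(1).
\]
Since $o(1)\to 0$ as $n\to\infty$, Cesàro summation gives $a_n / n \to c'/2$, that is $a_n \sim (c'/2)\, n$, equivalently
\[
  q_n \;\sim\; \Bigl(\tfrac{2}{c'}\Bigr)^{2} n^{-2},
\]
as claimed.

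The main technical obstacle is the expansion of $f$ near $1$: the tail assumption gives only an asymptotic on the coefficients, and one must pass this through a Tauberian theorem to obtain a pointwise asymptotic for $1-f(1-u)$ with the correct fractional power $u^{3/2}$. Because $Z$ has infinite variance, the standard Taylor expansion of $f$ at $1$ does not reach beyond the linear term, so the sub-leading $u^{3/2}$ term (rather than a $u^{2}$ term) is exactly what encodes the $3/2$-stable behaviour and ultimately produces the exponent $-2$ in the survival probability. Everything else is a routine recursion analysis; the result is a special case of the classical Slack-type theorem for critical Galton-Watson processes whose offspring distribution lies in the domain of attraction of an $\alpha$-stable law with $\alpha=3/2$.
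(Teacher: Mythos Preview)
Your argument is correct and is precisely the classical generating-function approach to Slack's theorem for critical Galton--Watson processes in the domain of attraction of a stable law (here with index $3/2$). The paper itself does not prove this lemma at all: it simply states that the result is standard and refers the reader to \cite{KC08}, so there is no paper proof to compare against. Your sketch matches what one finds in that reference and in Slack's original work; the only step worth making slightly more explicit in a polished write-up is the Tauberian passage from the tail asymptotic $\P(Z>k)\sim ck^{-3/2}$ to the expansion $1-f(1-u)=u-c'u^{3/2}+o(u^{3/2})$, but you have correctly flagged this as the one non-routine point.
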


\begin{lemma}
  \label{L:GW_vol}
  Let $T^*$ be a critical Galton-Watson tree with offspring distribution
  satisfying $\P(Z>k) \sim c k^{-3/2}$ conditioned to survive.  Let $W_n$
  be the number of offspring in the $n$th generation of $T^*$.  Let $Y_n =
  \sum_{t=1}^n W_n$. Then $n^{-2}W_n$ and $n^{-3}Y_n$ converge to some
  non-zero random variables in law.
\end{lemma}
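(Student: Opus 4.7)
The plan is to derive both convergences from the standard scaling limit of critical Galton--Watson processes to a continuous-state branching process (CSBP). Since $\P(Z>k)\sim ck^{-3/2}$, the centered offspring $Z-1$ lies in the domain of normal attraction of a spectrally positive $\alpha$-stable law with index $\alpha=1/2$. By Lamperti's functional limit theorem (see \cite{KC08}), under the law of the unconditioned critical tree $T$ conditioned on $\{Z_n>0\}$, the rescaled process
\begin{equation*}
  \left(n^{-2}Z_{\lfloor nt\rfloor}\right)_{t\in[0,1]}
\end{equation*}
converges in the Skorokhod topology to a CSBP $X$ with branching mechanism $\psi(u)=c'u^{3/2}$, started from a unit mass and conditioned on $\{X(1)>0\}$. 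This scaling is consistent with the tail $\P(Z_n>0)\sim cn^{-2}$ from \cref{L:GW_height} and with $\E(Z_n\mid Z_n>0)\sim c n^2$ forced by criticality.

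To pass from the finite conditioning $\{Z_n>0\}$ to the tree $T^*$ conditioned to survive forever, I would use the fact that $T^*$ is the local weak limit of $T\mid\{Z_n>0\}$ (Kesten's construction via the size-biased spine). Under the same rescaling, the continuum analogue of this procedure produces the CSBP$(\psi)$ conditioned never to become extinct, obtained as an $h$-transform of $X$. This limiting process, which I continue to denote by $X$, is a.s.\ strictly positive at every $t>0$, which will give the non-triviality of both limits.

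From this functional convergence the first assertion is immediate: $W_n/n^2=n^{-2}Z_n\Rightarrow X(1)$. For the second, observe that
\begin{equation*}
  \frac{Y_n}{n^3}=\frac{1}{n}\sum_{t=1}^{n}\frac{W_t}{n^2}
\end{equation*}
is a Riemann sum for the functional $\int_0^1 X(s)\,ds$. The integration functional is continuous on the Skorokhod space at paths without fixed discontinuities at the endpoints, a property that holds a.s.\ for $X$. The continuous mapping theorem then yields $Y_n/n^3\Rightarrow\int_0^1 X(s)\,ds$, which is a.s.\ strictly positive since $X>0$ on $(0,1]$.

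The main technical obstacle is justifying that the local weak limit $n\to\infty$ producing $T^*$ commutes with the scaling rescaling producing $X$, i.e.\ that the discrete spine construction of $T^*$ passes in the limit to the $h$-transform of the CSBP. This is standard in the branching-process literature and detailed in \cite{KC08}; alternatively, it can be checked directly from the spine decomposition of $T^*$, using that the size-biased offspring distribution has tail $\P(\hat Z>t)\sim ct^{-1/2}$, so that the dominant contribution to $W_n$ comes from a single spine vertex with anomalously many children which seeds $\Theta(n^2)$ surviving descendants.
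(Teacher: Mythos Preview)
The paper does not give its own proof of this lemma; it simply states that ``the two following lemmas are standard.  Proofs can be found e.g.\ in \cite{KC08}.''  Your sketch is essentially the argument one finds in that reference (Croydon--Kumagai), namely passing through the functional scaling limit to a stable continuous-state branching process and then reading off the one-point and integrated limits.  So there is no methodological divergence to speak of: you are filling in what the paper outsources.

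One correction is needed.  You say the centered offspring $Z-1$ lies in the domain of normal attraction of a stable law with index $\alpha=1/2$.  Since $\P(Z>k)\sim ck^{-3/2}$, the correct stable index is $\alpha=3/2$.  Fortunately the slip does not propagate: the branching mechanism $\psi(u)=c'u^{3/2}$ you write is the correct one for $\alpha=3/2$, and the exponents $1/(\alpha-1)=2$ for $W_n$ and $\alpha/(\alpha-1)=3$ for $Y_n$ are exactly the ones in the statement.  Apart from this, your outline is sound; the one genuinely delicate point---that Kesten's tree $T^*$ (the local limit of $T$ conditioned on $\{Z_n>0\}$) rescales to the $h$-transformed CSBP conditioned on non-extinction, so that the two limits commute---is precisely what is carried out in \cite{KC08}, and you rightly defer to that reference.
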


\begin{proof}[Proof of \cref{lem:vol_from_top}]
  We will construct inductively a growing sequence of subgraphs $\{P_i\}$
  around $p^{(r)}(\rho)$ in such a way that $P_{i} $ contains the hull of
  the ball of radius $i$ around $p^{(r)}(\rho)$ in $\Sk(M \setminus H_r)$.
  For all $i$, all vertices of $P_i$ will be in layers $S_{r-i},\dots,S_i$
  (as is the ball they bound).
  As a basis, we set $P_0 = \{p^{(r)}(\rho)\}$.  We also define a two-sided
  sequence of vertices in $S_r$, starting with $U_0 = \{p^{(r)}(\rho)\}$.
  For $i>0$, having defined $P_{i-1}$, and $U_{1-i},\dots,U_{i-1}$, let
  $U_i$ be the nearest vertex in $S_r$ to the right of $U_i$ such that the
  tree below $U_i$ survives for at least $i$ generations.  Similarly,
  $U_{-i}$ is the nearest vertex in $S_r$ to the left of $U_{1-i}$ such
  that the tree below $U_{-i}$ survives for at least $i$ generations.

  We now define $P_i$ as follows. We take all vertices of the trees at
  $U_{-i}$ and at $U_i$, from $S_r$ down to $S_{r-i}$.  Since the
  definition of the trees is asymmetric in that of the holes, it is
  convenient for the tree at $U_i$ to also take the rightmost vertex of the
  rightmost hole at each level.  (This vertex is in a tree further to the
  right). Finally, we also take in each of these levels all vertices
  between these two trees.

  Note that since the first $i$ levels of the tree below $U_i$ are strictly
  to the right of the tree below $U_i$, and similarly on the left, we have
  that $P_i$ indeed form an increasing sequence of subgraphs.  See
  \cref{fig:vol_proof} for an illustration.

  \begin{figure}
    \centering
    \includegraphics[scale = 0.7]{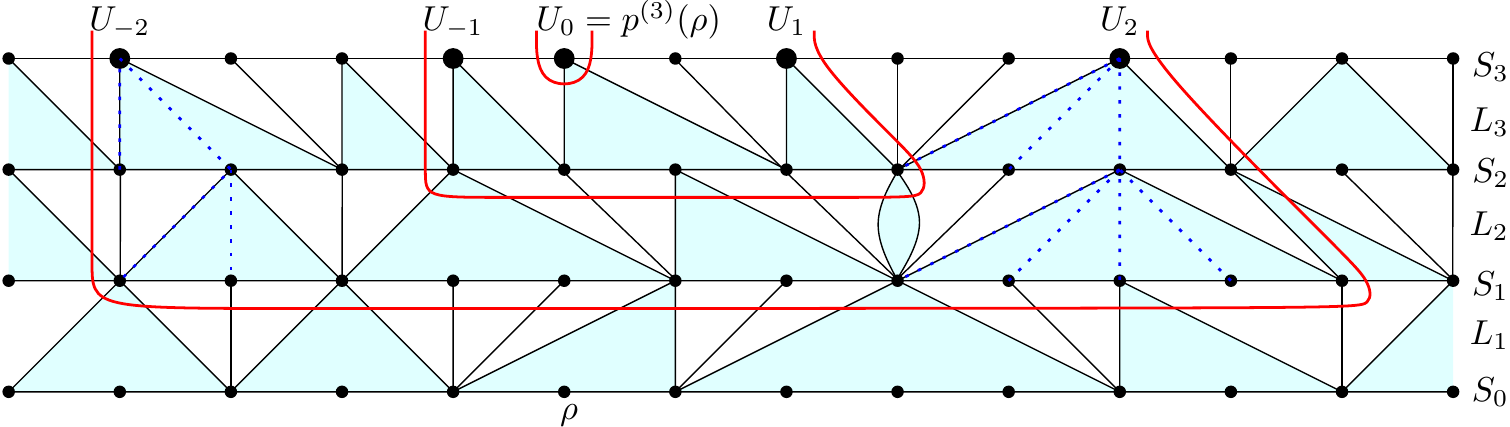}
    \caption{Construction of the maps $P_0$,$P_1$,$P_2$
       in the proof of \cref{lem:vol_from_top}. The vertices
       $U_{-2},\dots,U_2$ are larger.  The first two generations of the
       trees below $U_{\pm 2}$ are dotted in blue.  The boundaries of
       the maps are marked in red.  Each boundary
       forms a cutset around the previous one.}
    \label{fig:vol_proof}
  \end{figure}

  The rest of the proof consists of two claims.  First, that the set $P_i$
  contains the ball of radius $i$ around $p^{(r)}(\rho)$ in the map $\Sk(M
  \setminus H_r)$.  Secondly, we estimate of the size of these sets.

  The first claim is proved by induction.  Clearly the claim is true for
  $P_0$.  For the induction step, we argue that the internal boundary of
  $P_i$ (i.e.\ vertices of $P_i$ connected to its complement) is completely
  contained in the two trees below $U_{\pm i}$ together with the segment of
  $S_{r-i}$ between the two trees.  In particular, the boundary is disjoint
  of $P_{i-1}$, and hence each $P_i$ contains a ball of radius one around
  $P_{i-1}$.

  A level $S_j$ is naturally partitioned into intervals of vertices with a
  common parent.  The lower boundary of a hole is one such interval,
  together with the first vertex of the next interval to the right. Edges
  of $\Sk(M)$ are either within intervals, or between adjacent intervals,
  or between a vertex and its parent, or between a vertex and the parent of
  an adjacent interval.  Since every level from $S_{r-i},\dots,S_r$
  contains some vertices from the trees under $U_i$ and $U_{-i}$ these two
  trees indeed separate the rest of $P_i$ from vertices to the right and
  left.  Clearly only vertices in $S_{r-i}$ can be connected to vertices
  further down in the map, and the first claim is proved.

  Finally, we consider the size of $P_{2r}$, which consists of the first
  $2r$ generations from the trees rooted at each vertex between $U{_-2r}$
  and $U_{2r}$.  The tree rooted at $U_0$ is special: Its first $r$ levels
  are those of the tree conditioned to survive, with one vertex at each
  level having the size-biased offspring distribution.  After level $r$, it
  transitions to a critical tree.  The trees at all other vertices of $S_r$
  are critical Galton-Watson trees, except that the choice of $U_i$ is not
  independent of the trees.

  Fix some $\eps>0$.  For the tree at $U_0$, by \cref{L:GW_vol} we have for
  some $C$ that the number of vertices in generations $0,\dots,r$ is at
  most $Cr^3$ and the number of vertices at generation $r$ is at most
  $Cr^2$ with probability at least $1-\eps$.  Below each of the vertices at
  generation $r$ we consider the first $r$ generation of an independent
  critical Galton-Watson tree.  On the event that generation $r$ is not too
  large, this adds in expectation at most another $Cr^3$ vertices, and by
  Markov's inequality the total contribution from the tree at $U_0$ is at
  most $(C+C/\eps) r^3$ with probability at least $1-2\eps$.

  The trees at other vertices of $S_r$ are all independent critical
  Galton-Watson trees, and we consider the first $2r$ levels of these
  trees.  The expected size of each such tree is $2r+1$ (including its
  root).  It is convenient to identify the vertices of $S_r$ with $\Z$,
  with $U_0$ being $0$.  Between $U_{i-1}$ and $U_i$ we consider trees
  until finding one that survives to generation $i$, and so $U_i$ is a
  stopping time.  Since $U_i-U_{i-1}$ is geometric with mean of order
  $Ci^2$ (by \cref{L:GW_height}), we have $\E U_{2r} \leq Cr^3$.  By Wald's
  identity, the expected total size of the trees from $U_0$ to $U_{2r}$ is
  at most $C r^4$. By symmetry, the same holds for trees to the left of
  $U_0$, and the claimed tightness follows.
\end{proof}

\begin{lemma}\label{lem:DmaxM}
  Let $\Delta$ be the maximal degree in $M$ of the vertices in $B_{sk}(r)$.
  There exists $C>0$ such that $\P(\Delta > C \log r) \to 0$ as $r \to \infty$.
\end{lemma}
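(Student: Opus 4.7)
The plan is to bound $\Delta$ by the largest $D_{\max}$ among the blocks meeting $B_{sk}(r)$, show that on a high-probability event this maximum is taken over a deterministic polynomial-size family of independent blocks, and then invoke the exponential tail of \cref{lem:Dmax} via a union bound. For the first reduction, note that every vertex $v\in M$ lies in at most four blocks of the layer decomposition (at most two in each of the two adjacent layers, as the boundaries of a block are a single upper edge and a lower interval in $S_{i-1}$), and every edge at $v$ is contained in one of these blocks. Hence $\deg_M(v)\le 4\max_{b\ni v}D_{\max}(b)$, so the event $\{\Delta>C\log r\}$ forces $D_{\max}(b)>(C/4)\log r$ for some block $b$ meeting $B_{sk}(r)$.

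For the deterministic window, I would fix $\eps>0$ and use \cref{P:volume_skel} to choose $K=K(\eps)$ with $\P(|B_{sk}(r)|>Kr^4)<\eps$. On the complementary event $B_{sk}(r)$ has at most $Kr^4$ skeleton vertices and, by the proof of \cref{P:volume_skel}, lies entirely in the layers $L_{-r},\dots,L_r$. Enumerate the blocks of each layer $L_i$ as $b_{i,j}$, $j\in\Z$, with $b_{i,0}$ the block containing the canonical projection of $\rho$ onto $S_i$. I claim $B_{sk}(r)\cap S_i$ is a contiguous interval of $S_i$ about this projection: $B_{sk}(r)$ is simply connected as a planar region (the hulling operation kills every bounded complementary component in $\Sk(M)$), so its intersection with the path $S_i$ is an interval. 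The blocks of $L_i$ meeting this interval then form a contiguous block-index range of length at most $4Kr^4$ about $b_{i,0}$, so every block touching $B_{sk}(r)$ belongs to the deterministic window $\mathcal{W}_r:=\{b_{i,j}:|i|\le r,\ |j|\le 4Kr^4\}$ of cardinality $O(r^5)$.

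For the union bound, note that the blocks in $\mathcal{W}_r$ are independent across layers by \cref{L:H_i_law} and, within each layer, i.i.d.\ of law $\B$ except for a single size-biased block of law $\Bb$ by \cref{prop:L0}. By \cref{lem:Dmax} each satisfies $\P(D_{\max}(b)>t)\le C_1 e^{-c_1 t}$ for constants $c_1,C_1>0$. Setting $t=(C/4)\log r$ and summing,
\[
\P\bigl(\exists b\in\mathcal{W}_r:\ D_{\max}(b)>(C/4)\log r\bigr)\le |\mathcal{W}_r|\cdot C_1 r^{-c_1C/4}=O\bigl(r^{5-c_1C/4}\bigr),
\]
which tends to $0$ provided $C>20/c_1$. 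Combining with Step 1 and the good event, $\P(\Delta>C\log r)\le\eps+o(1)$, and letting $\eps\downarrow 0$ yields the claim.

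The main obstacle will be the contiguity claim used above — that $B_{sk}(r)\cap S_i$ is a contiguous interval of $S_i$ about the projection of $\rho$. The intuition is immediate from planar topology, but a rigorous argument requires a planar-separation statement along the following lines: if $u\in S_i$ lies strictly between $v,w\in B_{sk}(r)\cap S_i$ and $u\notin B_{sk}(r)$, then the $\Sk(M)$-component of $u$ in the complement of the ball reaches infinity while being trapped in the planar embedding between the images of $v$ and $w$, contradicting simple connectivity of the hull. Should this argument prove awkward to formalise, a robust fallback is to observe that any block meeting $B_{sk}(r)$ is within graph distance $r+1$ of $b_{0,0}$ in the block adjacency graph $G_B$, and to combine this with the tightness of $|B_{sk}(r)|$ to produce a (still polynomial-size) deterministic enclosing window sufficient for the same union bound.
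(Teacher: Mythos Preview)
Your first reduction contains a genuine error: the claim that a skeleton vertex $v\in S_i$ lies in at most two blocks of the layer $L_{i+1}$ above it is false. A block can have lower boundary of length zero (i.e., $B=0$ in the notation of \cref{prop:L0}), in which case its entire intersection with $S_i$ is a single vertex. Since such blocks occur with positive probability and the blocks are i.i.d., arbitrarily many of them can sit over the same vertex $v$: concretely, $v$ lies in $2+k$ blocks of $L_{i+1}$, where $k$ is the (Geometrically distributed) number of consecutive empty-lower-boundary blocks squeezed between the two blocks carrying the $S_i$-edges adjacent to $v$. Thus $\deg_M(v)\le 4\max_{b\ni v}D_{\max}(b)$ fails, and the degree of $v$ genuinely depends on this random multiplicity. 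This is exactly the issue the paper isolates and handles: rather than bounding the number of blocks per vertex, it attaches to each skeleton \emph{edge} $e$ a local packet $S_e$ of blocks (walk left and right from $e$ until hitting a block with nonempty lower boundary), notes that $|S_e|$ is Geometric, and concludes that the associated maximal degree $d_e$ over all blocks in $S_e$ still has exponential tail via \cref{lem:Dmax}; one then checks $\Delta\le 2\max_e d_e$ with the maximum over edges $e$ in $B_{sk}(r+1)$.

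Once that is fixed, your remaining scaffolding --- contiguity of $B_{sk}(r)\cap S_i$ and the deterministic index window $\mathcal W_r$ --- is unnecessary, and, as you acknowledge, the contiguity claim is not obvious because $\Sk(M)$ is not a planar graph (your ``canonical projection of $\rho$ onto $S_i$'' is also undefined for $i<0$). The paper bypasses all of this: it uses \cref{P:volume_skel} only to bound the \emph{number} of skeleton edges in $B_{sk}(r+1)$ by $r^5$ with high probability, and then union-bounds the exponential tail of $d_e$ directly over those at most $r^5$ edges. The overall architecture you propose (polynomial volume bound plus exponential tail on block degrees plus union bound) is the paper's; the missing ingredient is precisely the Geometric block-count observation in place of your ``at most four blocks'' step.
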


\begin{proof}
  This almost follows immediately from~\cref{P:volume_skel,lem:Dmax} except
  we need to take care of the fact that a vertex can be incident to many
  blocks in the layer above it. However, such a number is still Geometric,
  so the lemma follows. We make this rigorous below.

  For every edge $e$ in $(\bigcup_iS_i )\cap B_{sk}(r+1)$,
  consider the first edge to the right or left of this edge whose block has
  nonempty lower boundary. Since the blocks are independent and every
  block has a positive probability of having a non-zero lower
  boundary, the set of edges $S_e$ we need to check until we find such an
  edge has a Geometric number of elements. Using this information and \cref{lem:Dmax}, we see that the maximal
  degree $d_e$ of vertices in in all these blocks corresponding to
  edges in $S_e$ has exponential tail. It
  is easy to see that $\Delta \le 2\max_e d_e$ where the maximum is
  over all the edges in $(\cup_i S_i )\cap B_{sk}(r+1)$.
  Now using \cref{P:volume_skel}, we know that $\P(B_{sk}(r)>r^5)$
  converges to $0$. We can take a union bound over
  $r^5$ many edges on the event $\{B_{sk}(r)\le r^5\}$  and choose a large enough $C$ to arrive at
  the desired conclusion.
\end{proof}

\section{$M$ as a distributional local limit}\label{sec:convergence}

We now define a sequence of finite maps $M_n \subset H$.  Each $M_n$
will inherit the layered structure from $H$, and so some vertices will be
skeleton vertices.  $M_n$ will have the property that if we select a root
$\rho_n$ uniformly from the skeleton vertices, then $(M_n,\rho_n)$
converges in distribution to $M$.

For a skeleton vertex $v\in H$, recall the definition of the parent $p(v)$
of $v$ from \cref{sec:full-plane-extension}, and that $p^{(k)}$ is the
$k$-fold composition of the operation $p$.  Now we set up a coordinate
system for skeleton vertices as follows.  The vertices of $S_k$ will have
coordinates $\{(k,n)\}_{n\in\Z}$, in the order they occur in $S_k$.  The
root vertex $\rho$ has coordinates $(0,0)$ and for any $k>0$, the vertex
$p^{(k)}(\rho)$ has coordinates $(k,0)$.  Having defined these, the vertex
of $S_k$ at a distance $j$ to the right (resp.\ left) of $(k,0)$ has
coordinates $(k,j)$ (resp.\ $(k,-j)$).  See \cref{fig:coordinate} for an
example.  Note that coordinates are only defined for $S_k$ with $k\geq 0$.

\begin{figure}[h]
  \centering
  \includegraphics[width=\textwidth]{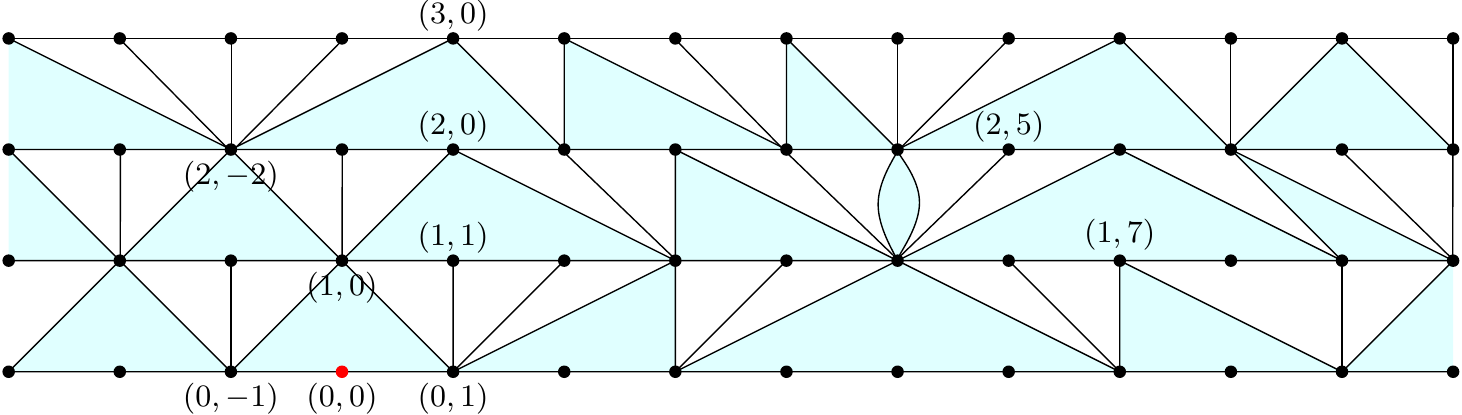}
  \caption{The coordinate system for $\Sk(M)$, with some coordinates noted.}
  \label{fig:coordinate}
\end{figure}

\begin{lemma}\label{lem:parent_child}
  Fix $k \ge 0$.  Define $\ell' = \ell'(k,\ell)$ by $(k+1,\ell') =
  p((k,\ell))$.  Then almost surely,
  \[
  \frac{\ell'}{\ell} \xrightarrow[\ell \to \infty]{} 1.
  \]
\end{lemma}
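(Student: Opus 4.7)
The plan is to rewrite $\ell'$ as a renewal count in an essentially i.i.d.\ sequence of block sizes in $L_{k+1}$, and then apply the strong law of large numbers.  Let $B_0, B_1, B_2, \ldots$ denote the blocks of $L_{k+1}$ encountered moving right from $(k+1,0) \in S_{k+1}$, where $B_j$ has upper boundary the edge between $(k+1,j)$ and $(k+1,j+1)$ and lower boundary consisting of $b_j$ edges of $S_k$.  The equality $p((k,0)) = (k+1,0)$ (built into the coordinate system) forces, by the definition of the parent map in \cref{sec:full-plane-extension}, the edge of $S_k$ immediately to the right of $(k,0)$ to lie in $B_0$.  Let $b^* \in \{1,\ldots,b_0\}$ denote the number of edges of $B_0$ that lie to the right of $(k,0)$.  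The edge immediately to the right of $(k,\ell)$ is the $(\ell+1)$-st edge to the right of $(k,0)$, and since the hole of $B_j$ meets $S_{k+1}$ only at its upper-left endpoint $(k+1,j)$, unwinding the definition of $p$ gives
\[
\ell' \;=\; \min\bigl\{\, j \ge 0 \,:\, b^* + b_1 + b_2 + \cdots + b_j \ge \ell + 1\,\bigr\}.
\]

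Applying \cref{prop:L0} to $L_{k+1}$, the sizes $\{b_j\}_{j \ge 1}$ are independent with law $\B$, apart from at most one exceptional index corresponding to the possibly biased block of $L_{k+1}$ containing the root edge of $H_k$.  An unbiased block has lower boundary of mean $1$, and the single exceptional block (if any) is a.s.\ of finite size, so the strong law of large numbers applied to the remaining i.i.d.\ indices gives $(b_1 + \cdots + b_n)/n \to 1$ almost surely.  Combined with $b^* < \infty$ a.s., the display above is a standard renewal characterization of $\ell'$, yielding $\ell'/\ell \to 1$ almost surely as $\ell \to \infty$.

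I do not anticipate a serious obstacle.  The one delicate point is that the possibly biased block has infinite mean size (the tail $\P(b>t) \sim c t^{-3/2}$ in \cref{prop:L0} makes the size-biasing have tail $\sim t^{-1/2}$), which prevents a direct citation of an i.i.d.\ strong law for the full sequence $\{b_j\}_{j \ge 1}$.  This is cosmetic: the anomaly occurs at a single a.s.\ finite index and is absorbed into lower-order corrections to the renewal sum.
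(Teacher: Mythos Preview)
Your argument is correct and is essentially the paper's own proof written out in more detail: both reduce to the observation that the lower-boundary sizes $\{b_j\}_{j\ge 1}$ of the blocks in $L_{k+1}$ to the right of $(k+1,0)$ are i.i.d.\ with law $\B$ and mean $1$, so that $\ell/\ell'\to 1$ by the Strong Law of Large Numbers via the renewal relation you wrote down. Your hedge about a possible biased block among the $b_j$ for $j\ge 1$ is in fact unnecessary (rooting $H_k$ at the edge to the right of $(k,0)$, which is legitimate by \cref{L:H_i_law} since $p^{(k)}(\rho)$ is a function of $L_1,\dots,L_k$, forces the biased block to be exactly $B_0$), but this does not affect correctness.
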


\begin{proof}
  The blocks in layer $L_{k+1}$ corresponding to vertices $(k+1,i)$ for
  $i>0$ form an i.i.d.\ sequence of blocks distributed as $\mathcal B$.  The
  statement is now an immediate consequence of \cref{prop:L0} (specifically
  that $\E B_i = 1$) and the Strong Law of Large Numbers which implies
  $\ell/\ell' \to 1$.
\end{proof}

Now define $M_n$ as follows.  The skeleton vertices of $M_n$, denoted
$\Sk(M_n)$ is the set $\{(i,j): 0 \le i \le n,1 \le j \le n\}$.  The holes
of $M_n$ includes all holes in $H$ all of whose skeleton vertices are
contained in $\Sk(M_n)$.
Finally take a root $\rho_n$ for $M_n$,
which is a uniformly selected selected skeleton vertex from $M_n$.




Next, we show that $\rho_n$ is far away from the boundary of $M_n$ with
high probability.

\begin{lemma}\label{lem:root_inside}
  We have
  \[
  d_\Sk(\rho_n, \partial M_n) \xrightarrow[n,k\to\infty]{(d)} \infty
  \]
  where $d_\Sk$ is graph distance in $\Sk(M_n(k))$.
\end{lemma}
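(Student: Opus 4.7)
The plan is to combine two ingredients. First, since $\rho_n$ is uniformly chosen from the $\sim nk$ skeleton vertices, its coordinates $(I, J)$ are uniform on $\{0, \dots, k\} \times \{1, \dots, n\}$, and in particular for any fixed $L \ge 1$, $\P(\min(I, k-I, J-1, n-J) \le L) \to 0$ as $n, k \to \infty$. Second, for any fixed $r$ and $\eps > 0$ we want to produce $L = L(r,\eps)$ such that the skeleton ball $B_{\Sk(M_n)}(\rho_n, r)$ is contained in the coordinate window $\{(i,j) : |i - I| \le r,\ |j - J| \le L\}$ with probability at least $1 - \eps$. Once both are in hand, for $n, k$ large relative to $L(r,\eps)$ the ball cannot meet $\partial M_n$, so $d_\Sk(\rho_n, \partial M_n) > r$ with probability at least $1-2\eps$, which is the claim.

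The main effort is the reach bound. The $i$-reach is immediate because every edge of $\Sk(M)$ changes the layer index by at most one. For the $j$-reach, note that $\Sk(M_n)$ is obtained from $\Sk(M)$ by restricting to $V(M_n)$ and dropping edges from holes that cross the coordinate boundary, so distances in $\Sk(M_n)$ dominate those in $\Sk(M)$, and it suffices to bound the $j$-reach of $B_{\Sk(M)}(\rho_n, r)$. By the $j$-translation invariance of $M$ we may assume $J = 0$; and on the high-probability event that $I > r$ and the original $|J|$ is at least the target $L$, the size-biased block of $L_1$ lies outside the $r$-ball, so the ball around $\rho_n$ has the same distribution as the $r$-ball around an arbitrary, non-size-biased skeleton vertex in $\Sk(M)$. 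For the latter, an argument parallel to the construction of the sets $P_i$ in the proof of \cref{lem:vol_from_top} applies: build enclosing two-sided sub-skeletons whose horizontal extent on each visited layer is governed by the positions $U_{\pm i}$ of the nearest Galton-Watson trees surviving to generation $i$; by \cref{L:GW_height}, the gaps $U_i - U_{i-1}$ are geometric with mean $O(i^2)$, and summing over $i \le 2r$ followed by Markov's inequality produces $L(r,\eps) = O_{\eps}(r^3)$.

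The main obstacle is adapting the one-sided, root-on-boundary construction of \cref{lem:vol_from_top} to a two-sided construction around an interior skeleton vertex. One needs to verify that on both sides of $\rho_n$ the tree structure provided by \cref{lem:Kri_tree} remains valid and gives genuinely independent critical Galton-Watson trees, and that on the high-probability event $\{I > r\} \cap \{|J| > L\}$ the asymmetries caused by the HUIPT root (size-biased block of $L_1$, as well as any proximity to $S_0$ or to $S_k$) are absent from the ball, so that the reach bound transfers cleanly from the full-plane $\Sk(M)$ picture to $\Sk(M_n)$.
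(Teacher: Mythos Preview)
Your plan is workable, but it takes a heavier and less direct route than the paper, and a couple of the reductions you invoke are not quite right as stated.

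The paper's proof does not use the Galton--Watson machinery of \cref{lem:vol_from_top} at all. It passes to a sparser auxiliary graph on skeleton vertices (where $x\sim y$ iff they lie in the same level and are adjacent, or one is the parent of the other), observes that its metric $d_p$ satisfies $d_p\le 3\,d_\Sk$, and then appeals to \cref{lem:parent_child}: for fixed $i$ and any $\eps>0$ there is an $M$ so that with high probability every vertex $(x,y)$ with $x\in[i-r,i+r]$ and $y>M$ has all $d_p$-neighbours with second coordinate in $(e^{-\eps}y,e^{\eps}y)$. Hence if $j>e^{r\eps}M$ the $d_p$-ball of radius $r$ about $(i,j)$ stays inside the multiplicative window $[e^{-r\eps}j,e^{r\eps}j]$, and combined with $r<i<n-r$ this keeps the ball away from $\partial M_n$. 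This is short and sidesteps the two-sided adaptation you flag as the main obstacle.

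Two points about your argument. First, ``$j$-translation invariance of $M$'' does not hold for the positive layers: by the fringe description after \cref{lem:Kri_tree}, \emph{every} spine vertex $(i,0)$ for $i\ge 0$ has size-biased offspring, not only the block of $L_1$; so your reduction ``assume $J=0$'' and ``the size-biased block of $L_1$ lies outside the ball'' needs to be replaced by ``the entire spine lies outside the ball''. Second, the natural way to run your $P_i$ idea around an interior vertex is to climb to $p^{(r)}(\rho_n)$ on layer $I+r$ and run the $P_{2r}$ construction downward from there, exactly as in \cref{lem:vol_from_top}; but then you must control the second coordinate $J'$ of $p^{(r)}(\rho_n)$ to ensure the construction avoids both the spine at $0$ and the boundary at $n$. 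The clean tool for that is precisely \cref{lem:parent_child}. So your approach can be completed and yields an explicit polynomial reach bound, but it is more laborious and in the end not fully independent of the LLN step the paper uses.
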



\begin{proof}
  Consider the subgraph of the skeleton graph where $x\sim y$ if they are
  either at the same level and adjacent, or one is the parent of the other.
  Let $d_p$ be the distance in this graph.  It is easy to see that
  $d_p(x,y) \leq 3d_\Sk(x,y)$, so it suffices to prove that for arbitrary
  $r$, with high probability $d_p(\rho_n, \partial M_n) \geq r$.

  Observe that since $M_n$ always has $n(n+1)$ skeleton vertices by
  definition, and the coordinates of $\rho_n$ are independent of $M_n$. Let
  the root $\rho_n$ have coordinates $(i,j)$.  As $n\to\infty$, with high
  probability $r < i < n-r$.  On this event, the ball $B_p(\rho_n,r)$ does
  not reach levels $S_0$ or $S_n$.  Fix any such $i$.  For any $\eps>0$
  there is some $M$ so that with probability at least $1-\eps$, for every
  vertex $(x,y)$ with $x\in[i-r,i+r]$ and $y>M$, if $(x',y')$ is adjacent
  to $(x,y)$ in the $d_p$ metric then $y'/y \in (e^{-\eps},e^{\eps})$.
  Call this event $B_i$, and assume it holds.  If $\rho_n=(i,j)$ and
  $j>e^{r\eps} M$ then every vertex in $B_p(\rho_n,r)$ has second
  coordinate in $[e^{-r\eps} j, e^{r\eps}j]$.  If $n$ is large enough, then
  with high probability $e^{r\eps} M < j < e^{-r\eps} n$, and then the ball
  $B_p(\rho_n,r)$ is contained in $M_n$.
\end{proof}

\begin{prop}\label{P:convergence}
  We have
  \[
  (M_n,\rho_n) \xrightarrow [n \to \infty]{(d)} (M,\rho)
  \]
  in the weak local topology.
\end{prop}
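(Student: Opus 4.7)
By definition of the local topology, to prove $(M_n,\rho_n)\to(M,\rho)$ it suffices to show that for every $r\geq 0$ and every finite rooted planar map $Q$, $\P(B_r(M_n,\rho_n)\cong Q) \to \P(B_r(M,\rho)\cong Q)$. The plan is to reduce both balls, with high probability, to finite collections of blocks from the layer decomposition, and to verify that the joint laws of these block collections coincide in the limit.

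The first step is to localize the ball. Using \cref{lem:DmaxM} to bound degrees, \cref{P:volume_skel} to bound the skeleton volume, and the fact that edges only connect skeleton vertices in the same or adjacent layers, the ball $B_r$ in either map is, with probability tending to $1$, contained in a bounded collection of blocks lying in a bounded window of layers and columns around the root's coordinates. In $M_n$, the coordinates $(K_n,J_n)$ of $\rho_n$ are uniform in $\{0,\dots,n\}\times\{1,\dots,n\}$, hence both are macroscopic in $n$ with high probability; combined with \cref{lem:root_inside} (giving $d_\Sk(\rho_n,\partial M_n)\to\infty$), this window stays inside $M_n$, away from $\partial M_n$, from the original boundary layer $S_0$ of $H$, and from ``column $0$'' of the coordinate system.

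The second step is to match the block laws inside the two windows. By \cref{L:H_i_law} the layers $L_i$ of $H$ ($i\geq 1$) are i.i.d., and by construction the added layers $L_i$ of $M$ ($i\leq 0$) are independent copies with all blocks of law $\B$. By \cref{prop:L0}, within each layer of $H$ the unique non-$\B$ block is the one containing the root edge of the corresponding $H_{j-1}$, which in the canonical coordinate convention lies at column $0$ and is therefore excluded from our window. Consequently, conditional on the high probability event of the first step, every block in the $M_n$-window is independent of law $\B$, except for the block of $L_{K_n+1}$ containing $\rho_n$; the uniform choice of $\rho_n$ among skeleton vertices of $M_n$ size-biases this block by its lower boundary length, giving it law $\Bb$ with $\rho_n$ uniform on its lower boundary. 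The window around $\rho$ in $M$ admits the same description: by \cref{prop:L0} the block of $L_1$ containing $\rho$ is of law $\Bb$, and by the freedom granted by translation invariance in \cref{L:H_i_law} we may place the distinguished biased blocks of $L_i$, $i\geq 2$, far from $\rho$, so that all other blocks in the window are i.i.d.\ of law $\B$. Since the ball is a deterministic function of the window of blocks, this matching yields the desired convergence.

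The main obstacle lies in making the second step rigorous. One must verify both that the size-biasing arising from uniform sampling of $\rho_n$ reproduces exactly the distribution $\Bb$ of \cref{prop:L0} (including the uniform marking of an edge in its lower boundary), and that none of the distinguished biased blocks of the other layers enter the local window. The latter is where \cref{lem:parent_child} is used: it ensures that the column coordinate $J_n$ of $\rho_n$ is approximately preserved under iteration of the parent map $p$, so that $\rho_n$ together with its ancestors and descendants all remain far from column $0$, where the biased blocks of each layer of $H$ are located in the canonical convention.
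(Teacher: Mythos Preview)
Your overall architecture is close to the paper's: localize the ball in a finite window of blocks using \cref{lem:root_inside}, and then match the joint block law around $\rho_n$ to that around $\rho$. The gap is in how you describe that joint law. You assert that in both $M_n$ and $M$ the window contains \emph{exactly one} block of law $\Bb$ (the one in the layer immediately above the root) and that all other blocks are i.i.d.\ $\B$. This is not correct, and your justification via ``placing the distinguished biased blocks of $L_i$, $i\ge 2$, far from $\rho$'' misreads what translation invariance gives you.

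The point is that $p(\rho)$ is a function of $L_1$ and hence independent of $H_1$; by \cref{L:H_i_law} and translation invariance, $H_1$ rooted at $p(\rho)$ again has law $\H$, so by \cref{prop:L0} the block of $L_2$ whose lower boundary contains $p(\rho)$ has law $\Bb$, not $\B$. Choosing a different root for $H_1$ does not change this: it only relabels which block is \emph{called} the root block, it does not alter the marginal law of the block sitting at the fixed (relative to $L_1$) vertex $p(\rho)$. Iterating, every ancestor $p^{(k)}(\rho)$ carries a $\Bb$ block in $L_{k+1}$; the same holds around $\rho_n$ in $M_n$. So your ``one $\Bb$ block'' picture undercounts the biased blocks on both sides simultaneously; the matching you wrote down is a matching of two incorrect descriptions.

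The paper's proof sidesteps this entirely by observing that $H_{i_n}$ re-rooted at $\rho_n$ has law $\H$ outright (translation invariance applied once), so the entire upper half around $\rho_n$ is distributed exactly as the upper half $H$ of $M$, spine of $\Bb$ blocks and all; only the lower layers need the ``biased block at column $0$ is far away'' argument. If you want to salvage your block-by-block matching, you must either reproduce this spine of $\Bb$ blocks explicitly in both descriptions, or invoke translation invariance of $H_{K_n}$ as the paper does.
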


\begin{proof}
  The coordinates $(i_n,j_n)$ of a the uniform root $\rho_n$ tend to
  infinity in distribution as $n\to\infty$.  By \cref{lem:root_inside}
  large balls around $\rho_n$ are contained in $M_n$, so the weak local
  limit of $(M_n,\rho_n)$ is the same as the limit of $(M,\rho_n)$.  Since
  $i_n,j_n$ are independent of $M$, it suffices to show that for a fixed
  sequence $\{(i_n,j_n)\}\to\{\infty,\infty\}$, if we take $\rho_n =
  (i_n,j_n)$, then $(M,\rho_n)$ converges in distribution to the full plane
  map $(M,\rho)$.

  Given the layers $L_1,\dots,L_i$, the half plane map above them (denoted
  $H_{i}$) has law $\H$. Thus the layers above $\rho_n$ have the law of the
  HUIPT, and are independent of the $i$ layers below $\rho_n$.  Note that
  translation invariance implies that the block above $\rho_n$ is precisely
  size-biased, as are subsequent blocks above it.

  Since the blocks in the first $i$ layers are independent with law $\B$, the layer below $\rho_n$ has
  distribution similar to that of layer $L_0$ in $M$, except for one block
  at distance $j_n\to\infty$.  The same is true for all $i_n$ levels below
  $\rho_n$. By \cref{lem:root_inside}, the distance to these biased blocks
  tends to infinity, giving the result.
\end{proof}

\section{Bounding degrees: the star-tree transformation}
\label{sec:surgery}

Following \cite{GN12}, we apply the so-called star-tree transformation to
our maps to get 
maps with bounded degrees.  These can then be embedded in the plane using
circle packings, which are better behaved when vertices have bounded
degrees.

The star-tree transform is constructed roughly as follows: starting with a
map $G$, possibly with large degrees, take its dual, which has large faces,
triangulate
each face to get a triangulation, and take the dual again to get a three
regular map $G'$ which is related to the original map.  The triangulation
step can be done in various ways, and we will be more specific below.  Each
vertex of $G$ of degree $d$ is replaced by a 3-regular tree which 
connects to other trees at its leaves.  Crucially for the recurrence
arguments, we make all of these trees as balanced as possible, so that a
vertex of degree $d$ (star) is replaced by a tree of diameter $O(\log d)$.

To make this precise, we first cut every edge in half, so that
every vertex becomes a star with $d$ leaves.  Next, each such star is
replaced by a balanced tree with $d-2$ internal vertices of degree $3$ and
$d$ leaves.  The leaves are in bijection with the leaves of the star that the
tree is replacing, in cyclic order.  The leaves are identified as
in the original map with leaves on other trees.  This creates a map with
maximal degree $3$.  (The new map is not $3$-regular, since vertices
of degree $1$ or $2$ maintain their degree and identified leaves have
degree 2.)  The choices of tree for each
vertex is arbitrary, except for being maximally balanced.  See
\cref{fig:startree} for an illustration. 

\begin{figure}
  \centering
  \includegraphics[width = 0.8\textwidth]{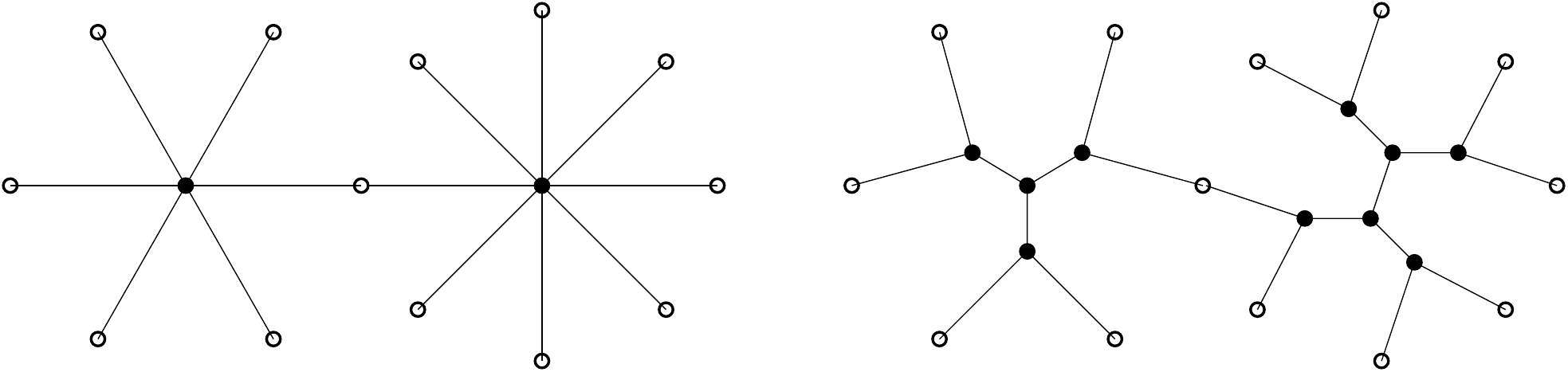}
  \caption{The star tree transform.  The white circles are the vertices
    created when cutting edges in half.  Here, vertices of degree 6 and 8
    are replaced by balanced binary trees with the same number of leaves.}
  \label{fig:startree}
\end{figure}

When the star-tree transform is applied to a map $G$, we call the resulting
map $G'$.  Clearly $G$ is a minor of $G'$, as it can be recovered by
contracting each tree back to a single vertex.  A vertex of degree $d$ in
$G$ corresponds to $(d-2)\vee 1$ vertices in $G'$.  Edges in the map $G'$
are now assigned conductances.  All edges of a tree associated with a
vertex of degree $d$ are given conductance $w_e=d$.  This allows us to use the
following lemma.

\begin{lemma}[\cite{GN12}]\label{L:equivalent}
  Let $G$ be a planar map, and $G'$ the weighted star-tree transform of $G$.
  If $G'$ is recurrent, then so is $G$.
\end{lemma}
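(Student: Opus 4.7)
The plan is to use the flow characterization of transience: a weighted network is transient if and only if some vertex admits a unit flow to infinity of finite Dirichlet energy. I will prove the contrapositive. So assume $G$ is transient and fix a unit flow $\theta$ from some vertex $v_0$ to infinity with $\mathcal{E}_G(\theta) = \sum_e \theta(e)^2 < \infty$ (here $G$ carries unit conductances).

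The first step is to lift $\theta$ to a flow $\theta'$ on $G'$. For each vertex $u$ of $G$, the tree $T_u$ has $d_u$ leaves in bijection with the edges of $G$ incident to $u$; require the leaf-edge corresponding to $e=(u,w)$ to carry flow $\theta(e)$ oriented outward from $u$. For $u \ne v_0$ the prescribed leaf values sum to $0$, while for $u = v_0$ they sum to $1$. Since $T_u$ is a tree, there is a unique extension to the interior of $T_u$ conserving flow at every internal vertex (with a single source at a chosen root $\rho_0$ of $T_{v_0}$ in the second case). The resulting $\theta'$ is a unit flow from $\rho_0$ to infinity in $G'$.

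The main step is the energy bound. Fix $u$ and list the leaf values as $\theta_1,\dots,\theta_{d_u}$. Removing an edge $e' \in T_u$ splits the tree into two subtrees, and the flow on $e'$ is the signed sum of the leaf values in one of them. Because $T_u$ is balanced, an edge at height $k$ above the leaves bounds a subtree with at most $d_u/2^k$ leaves, and the subtrees at a fixed height $k$ partition the leaves. Applying Cauchy--Schwarz and summing over $k$ yields
\[
\sum_{e' \in T_u}(\text{flow on }e')^2 \;\le\; \sum_{k \ge 0}\frac{d_u}{2^k}\sum_{i} \theta_i^2 \;\le\; C\, d_u \sum_{e \ni u} \theta(e)^2 .
\]
Each tree edge has conductance $d_u$, so the total energy contributed by $T_u$ is at most $C \sum_{e \ni u} \theta(e)^2$. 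Summing over all $u \in V(G)$ (each $G$-edge appearing in exactly two trees) gives $\mathcal{E}_{G'}(\theta') \le C' \mathcal{E}_G(\theta) < \infty$, so $G'$ is transient as required.

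The main obstacle is the interior tree estimate, which depends on two structural features of the transform. The balance of $T_u$ is essential: without it, the geometric series $\sum_k 2^{-k}$ would not converge, and a long ``handle'' in $T_u$ could carry flow with unbounded energy. Equally crucial is that tree edges carry conductance exactly $d_u$, so that the factor of $d_u$ coming from the Cauchy--Schwarz bound on subtree size cancels against the conductance, making the estimate linear in $\mathcal{E}_G(\theta)$.
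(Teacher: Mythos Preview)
Your argument is correct. The paper does not give its own proof of this lemma but simply cites \cite{GN12}, and your flow-energy argument is essentially the one found there: lift a finite-energy flow on $G$ to $G'$ by pushing the prescribed leaf values through each balanced tree, then use Cauchy--Schwarz level by level together with the conductance $d_u$ to bound the energy by a constant multiple of $\mathcal{E}_G(\theta)$.

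One small slip in the write-up: when you say an edge ``at height $k$ above the leaves'' bounds a subtree with at most $d_u/2^k$ leaves, your indexing is really from the root, not from the leaves (an edge $k$ levels above the leaves sits over roughly $2^k$ leaves). With the depth-from-root convention there are $2^k$ edges at level $k$, each over $\le d_u/2^k$ leaves, and your displayed bound $\sum_{k\ge 0} (d_u/2^k)\sum_i \theta_i^2$ is then exactly right. The final estimate $\mathcal{E}_{G'}(\theta') \le C\,\mathcal{E}_G(\theta)$ is unaffected.
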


For a rooted map, we can give the transformed map $G'$ a root $\rho'$ by
choosing uniformly a root within the tree (including the leaves)
corresponding to $\rho$.

Recall the rooted graph $(M_n,\rho_n)$ from \cref{sec:convergence}, where
some vertices are the skeleton. Apply the star-tree transform to $M_n$ to
get $M'_n$.
The skeleton vertices of $M_n'$ are all vertices in trees associated with
skeleton vertices of $M_n$, including the leaves.

There are two ways to choose a root for $M'_n$.  First, we could choose a
root uniformly among all skeleton vertices of $M'_n$.  The law of the
resulting rooted map is denoted $\nu_n$.  Take an arbitrary subsequential
limit of $\nu_n$ and call it $\nu$.  A second way is to take the rooted
$(M_n,\rho_n)$, and take the root of $M'_n$ to be a uniform vertex from the
tree associated with $\rho_n$.  We call the law of this rooted map $\mu_n$.
Note that the star tree transform is continuous in the local topology.
Since $(M_n,\rho_n)$ converges to $(M,\rho)$ we have that $(M'_n,\rho'_n)$
converges to $(M',\rho')$, with law $\mu=\lim \mu_n$, where $\rho'$ is a
uniform vertex in the tree associated to the root $\rho$ of $M$.

\begin{lemma}\label{lem:ac}
  The measure $\mu$ is absolutely continuous with respect to $\nu$.
\end{lemma}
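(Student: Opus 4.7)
The plan is to write down the Radon--Nikodym derivative $d\mu_n/d\nu_n$ explicitly, establish a uniform $L^2$ bound on it, and then pass to the limit via cylinder events. Since $\mu_n$ and $\nu_n$ share the same marginal law of $M_n$, the density is determined conditionally on $M_n$. Writing $Z_n := n(n+1) = |\Sk(M_n)|$ and $Z'_n := |\Sk(M'_n)|$, a skeleton vertex $w$ of $M'_n$ lying in the star--tree $T_v$ of $v\in\Sk(M_n)$ receives conditional probability $\frac{1}{Z_n\,|T_v|}$ under $\mu_n$ and $\frac{1}{Z'_n}$ under $\nu_n$ (with an extra factor of at most $2$ if $w$ is a midpoint-leaf shared by two adjacent trees), giving
\[
\frac{d\mu_n}{d\nu_n}(M'_n,w) \;=\; h_n \;\le\; \frac{2\,Z'_n/Z_n}{|T(w)|},
\]
and in particular $h_n \le 2\,Z'_n/Z_n$ since $|T(w)|\ge 1$.

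Next, bound $Z'_n/Z_n$ uniformly in $L^2$. Since $|T_v|$ is linear in the degree $d_v$ of $v$ in $M_n$, and the degree of a fixed skeleton vertex has uniform exponential tail (a short argument combining \cref{lem:Dmax} with the fact that $v$ is incident to a geometric number of blocks, and its degree contribution from each block is that of a boundary vertex of a Boltzmann triangulation, hence exponentially tight by \cref{lem:deg_boundary}), we have $\E[|T_v|^2]\le C$ uniformly in $v$ and $n$. Cauchy--Schwarz then yields
\[
\E[(Z'_n)^2] \;\le\; Z_n \sum_v \E[|T_v|^2] \;\le\; C\,Z_n^2,
\]
so $\sup_n\E_{\nu_n}[(Z'_n/Z_n)^2]\le C$ (using that $\nu_n$ has the natural marginal on $M_n$). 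Consequently $\sup_n\E_{\nu_n}[h_n^2]<\infty$, and $\{h_n\}_n$ is uniformly integrable under $\nu_n$.

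Finally, pass to the limit. For any $\eps>0$ choose $K=K(\eps)$ so that $\sup_n\E_{\nu_n}[h_n\mathbbm{1}_{h_n>K}]<\eps$; then for any Borel set $A$,
\[
\mu_n(A) = \int_A h_n\,d\nu_n \;\le\; K\,\nu_n(A) + \eps.
\]
Cylinder events $\{B_r(G,\rho)\cong G_0\}$ are clopen in the local topology, hence continuity sets for every probability measure, so $\nu_n(A)\to\nu(A)$ and $\mu_n(A)\to\mu(A)$ along the chosen subsequence. Thus $\mu(A)\le K\nu(A)+\eps$ on this generating algebra, and a standard monotone class / Caratheodory extension propagates the bound to all Borel sets. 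Letting $\eps\to 0$ gives $\mu(A)=0$ whenever $\nu(A)=0$.

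The main obstacle is the uniform $L^2$ control of $Z'_n/Z_n$. Since the $|T_v|$ are correlated, the argument relies on reducing, via Cauchy--Schwarz, to the pointwise second-moment bound $\E[|T_v|^2]\le C$, which in turn requires transferring the per-block exponential degree tail of \cref{lem:Dmax} to a uniform exponential tail for the degree of a single skeleton vertex. Everything after that is a routine uniform integrability passage to the limit on cylinder events.
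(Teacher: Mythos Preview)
Your argument is correct. Both you and the paper compute the conditional density given $M_n$: a skeleton vertex $w$ in the tree $T_v$ has $\nu_n$-mass $1/Z'_n$ and $\mu_n$-mass $\frac{1}{Z_n|T_v|}$ (plus a second term for shared leaves), so $h_n = \frac{Z'_n}{Z_n|T_v|}$ up to a factor of at most $2$.

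The paper's proof is much shorter: it asserts directly that $h_n \le 1$ (since $|T_v| = 2\deg(v)-2 \ge 2$) and invokes dominated convergence. But as you implicitly recognize, that bound is only $h_n \le Z'_n/Z_n$, and the paper's argument glosses over why the random normalization ratio $Z'_n/Z_n$ should be controlled. Your route---Cauchy--Schwarz reducing $\E[(Z'_n/Z_n)^2]$ to the uniform bound $\sup_v \E[|T_v|^2] \le C$, then uniform integrability and passage to the limit on cylinder sets---fills exactly this gap. The second-moment degree bound you need is indeed available from \cref{lem:Dmax,lem:deg_boundary} and the geometric count of incident blocks (this is the same mechanism behind \cref{lem:DmaxM}), and it holds uniformly over skeleton vertices of $M_n$, including the root with its size-biased block, since the degree of a fixed boundary vertex in a Boltzmann map has exponential tail regardless of the boundary length.

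So your approach is genuinely more careful than the paper's: the paper's one-line ``$d\mu_n/d\nu_n \le 1/2$'' is at best shorthand for the argument you actually carry out. What the paper's version buys is brevity; what yours buys is a complete proof.
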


\begin{proof}
  Given $M_n$, each skeleton vertex is equally likely to be the root under
  $\nu_n$.  Under $\mu_n$ a skeleton vertex in the tree of a vertex $v\in
  M_n$ has probability proportional to $(2\deg(v)-2)^{-1}$ of being the
  root, since we need to choose $\rho_n=v$ and the associated tree has
  $2\deg(v)-2$ vertices.  Thus the Radon-Nikodym derivative $d\mu_n/d\nu_n$
  is proportional to $(2\deg(\rho_n)-2)^{-1}$.  Since every skeleton
  vertex has degree at least $2$, $d\mu_n/d\nu_n \le 1/2$. In the case of an
  identified leaf between skeleton vertices $u$ and $v$, the probability is
  proportional to $(2\deg(u)-2)^{-1} + (2\deg(v)-2)^{-1} \le 1$. Thus
  using dominated convergence, 
  $\mu$ is absolutely continuous with respect to $\nu$. 
\end{proof}

Finally in order to use circle packing, it is useful to work with
triangulations. We triangulate each face of
$M_n',M'$ to obtain a triangulation.
This can be done while maintaining bounded degrees, as in \cite{BeSc}.
By a slight abuse of notation, we also denote the resulting maps
by $(M_n',\rho_n')$ and $(M',\rho')$ and their law by $\nu$.
Since adding edges can only makes a graph transient (via
the Rayleigh Monotonicity Principle), we immediately deduce the following
using \cref{L:equivalent,lem:ac}.

\begin{corollary}
  If $(M',\rho')$ is $\nu$-almost surely recurrent, then $(M,\rho)$ is
  almost surely recurrent, as is its subgraph $H$.
\end{corollary}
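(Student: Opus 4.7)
The proof plan is to chain together the three ingredients assembled just above the statement: the absolute continuity of \cref{lem:ac}, the star-tree equivalence of \cref{L:equivalent}, and Rayleigh monotonicity to peel off the extra triangulation edges and to pass from $M$ down to its subgraph $H$.

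First I would note that the hypothesis of the corollary is a statement about $\nu$-almost sure recurrence of the triangulated star-tree transform, but what we actually want is $\mu$-almost sure recurrence, where $\mu$ is the law of $(M',\rho')$ obtained from $(M,\rho)$ by the star-tree transform with $\rho'$ a uniform vertex in the tree associated to $\rho$. This is exactly what \cref{lem:ac} provides: since $\mu \ll \nu$ and the event ``$(M',\rho')$ is recurrent'' is measurable (in fact, recurrence is a property of the unrooted graph), any $\nu$-null set is also $\mu$-null, so $(M',\rho')$ is $\mu$-almost surely recurrent.

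Next I would strip off the triangulation step. Recall that the final $M'$ in the statement is obtained from the pure star-tree transform by adding extra edges inside each face to triangulate it. By the Rayleigh Monotonicity Principle, recurrence is preserved under edge removal, so the pure weighted star-tree transform of $M$ (before the triangulation step) is also recurrent $\mu$-almost surely. Now \cref{L:equivalent} directly applies: since this pure star-tree transform of $M$ is recurrent, $M$ itself is almost surely recurrent.

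Finally, the subgraph $H$ of $M$ is handled by one more application of Rayleigh monotonicity: $H$ is obtained from $M$ by deleting the layers $L_i$ with $i \le 0$ (and all edges within them), so $H$ inherits recurrence from $M$. There is no real obstacle here, as every ingredient has already been established; the only subtlety to keep in mind is that \cref{lem:ac} is stated for the rooted triangulations and hence automatically covers the version of $M'$ for which the hypothesis is assumed. The corollary therefore follows.
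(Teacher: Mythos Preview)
Your proof is correct and follows essentially the same approach as the paper: the paper's justification is simply the sentence ``Since adding edges can only make a graph transient (via the Rayleigh Monotonicity Principle), we immediately deduce the following using \cref{L:equivalent,lem:ac},'' and you have unpacked exactly these three ingredients in the right order. Your explicit observation that recurrence is a root-independent event, so that $\mu\ll\nu$ suffices to pass from $\nu$-a.s.\ to $\mu$-a.s.\ recurrence, is a useful clarification that the paper leaves implicit.
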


Finally, we shall also need a simple lemma relating adjacency in $M_n$ and
$M'_n$.  Let $\pi:M'_n\to M_n$ be the projection mapping each vertex in the
tree corresponding to a vertex $v$ to $v$.  A vertex arising from the
splitting of an edge in two is mapped (arbitrarily) to one of the two
endpoints of the edge.

\begin{lemma} \label{L:topology}
  If $u\sim v$ in $M'_n$, then either $\pi(u)=\pi(v)$ or else
  $\pi(u)\sim\pi(v)$.
\end{lemma}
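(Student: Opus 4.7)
The plan is to classify each edge of $M_n'$ by its origin in the construction and verify the claim case by case. An edge of $M_n'$ is either a tree edge coming from the star-tree transform itself, or a triangulation edge added when the faces of the transform are triangulated.

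For a tree edge $uv$, both endpoints lie inside a single tree $T_w$ associated to some vertex $w$ of $M_n$. If both are internal vertices of $T_w$, then $\pi(u)=\pi(v)=w$. If one endpoint is an identified leaf --- the midpoint vertex arising when an edge $\{w,w'\}$ of $M_n$ was split in half --- then by the definition of $\pi$ we have $\pi(v)\in\{w,w'\}$, while the other endpoint satisfies $\pi(u)=w$; since $w\sim w'$ in $M_n$, the desired conclusion follows in either case.

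For a triangulation edge, the key input is that $M_n$ is itself a triangulation: its internal faces come either from triangles of $H$ between skeleton vertices, or from the Boltzmann triangulations that fill the holes, all of which are triangles. Replacing each vertex of $M_n$ by a tree is a local operation that does not create new cycles, so every internal face of the star-tree transform of $M_n$ corresponds to a unique triangular face $\{a,b,c\}$ of $M_n$, with its boundary consisting of tree-paths through $T_a$, $T_b$, $T_c$ concatenated at the identified leaves $T_a\cap T_b$, $T_b\cap T_c$, $T_a\cap T_c$. In particular every vertex on this boundary projects into $\{a,b,c\}$, and hence any triangulation edge $uv$ added inside this face satisfies $\pi(u),\pi(v)\in\{a,b,c\}$. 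Since $a,b,c$ are pairwise adjacent in $M_n$, the conclusion holds in this case as well.

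The proof is essentially bookkeeping, and I do not anticipate any serious obstacle. The mildly delicate point is to confirm that the face correspondence between $M_n'$ and $M_n$ is exact, so that triangulation never produces an edge between trees of non-adjacent vertices of $M_n$; once that is observed, all remaining verifications are immediate from the construction.
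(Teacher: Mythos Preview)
Your proof is correct and follows essentially the same approach as the paper's. The paper's argument is terser---it simply notes that $M_n$ is a triangulation, so after the star-tree transform each face is bounded by paths from three trees together with the three edge-midpoint vertices, and all additional triangulation edges lie within such a face---while you spell out the tree-edge case separately and are more careful about identified leaves; but the key observation (that every face of the transform sits over a single triangle of $M_n$) is identical.
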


\begin{proof}
  Since $M_n$ is a triangulation, after vertices are replaced by trees,
  each face consists of paths from three trees corresponding to a face of
  $M_n$, and three vertices corresponding to the edges between the trees.
  All additional edges in $M'_n$ connect vertices within a face.
\end{proof}

\section{Recurrence via circle packing}
\label{sec:recurrence}

All the tools are in place, and we are ready to
build on the methods of \cite{BeSc,GN12} to prove our main
result. Throughout this section we have maps $(M_n',\rho_n')$ and $(M',\rho')$
with law $\nu_n$ and $\nu$ respectively.

Let us recall some useful terminology.  Given a set of points
$\cC$ in a metric space, the radius of isolation $R_x$ of a point $x\in\cC$
is the minimal distance to another point of $\cC$.  Following \cite{BeSc},
we say that a point 
$x\in\cC$ is \textbf{$(\delta,s)$-unsupported} if all but $s$ of the points
in $B(x,\delta^{-1}R_x)$ can be covered by a ball of radius $\delta R_x$.
Otherwise it is \textbf{$(\delta,s)$-supported}.
  A key idea in \cite{BeSc}, is that for small $\delta$ and large
$s$, a finite set cannot have too many $(\delta,s)$-supported points.
We use a quantitative form of this appears in \cite[Lemma~3.4]{GN12}:

\begin{lemma}[\cite{GN12}]\label{lem:delta_s}
  There exists some $A$, so that for any finite $\cC \subset \R^2$,
  for all $\delta \in (0,1/2)$ and $s \ge 2$, the fraction of
  $(\delta,s)$-supported points in $\cC$ is at most $\frac{A\log
    (\delta^{-1})}{\delta^2 s}$.
\end{lemma}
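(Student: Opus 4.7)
This is a deterministic geometric result about finite subsets of $\R^2$; my approach would follow the double-counting-with-dyadic-scales strategy of \cite{BeSc,GN12}. Write $S \subseteq \cC$ for the set of $(\delta,s)$-supported points; the goal is to show $|S|/|\cC| \leq A\log(1/\delta)(\delta^2 s)^{-1}$.

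For each $x \in S$, I would first extract a witness set $Y_x \subseteq \cC \cap B(x,\delta^{-1}R_x)$ of mutually $\delta R_x$-separated points, taking $Y_x$ to be a maximal such subset. A standard packing estimate in $\R^2$ shows that any disc of radius $\delta R_x$ contains only a bounded number of $\delta R_x$-separated points. By maximality $\cC \cap B(x,\delta^{-1}R_x) \subseteq \bigcup_{y\in Y_x} B(y, \delta R_x)$, so a pigeonhole-type argument combined with the supportedness hypothesis (which forbids any single disc of radius $\delta R_x$ from covering all but $\leq s$ points) forces $|Y_x| \geq cs$ for an absolute constant $c>0$.

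I would then double-count the pairs $(x,y)$ with $x \in S$ and $y \in Y_x$. The lower bound is $\geq c s |S|$ by the previous step. For the upper bound, fix $y \in \cC$ and count supported $x$'s with $y \in Y_x$, noting that $|x-y| \leq \delta^{-1} R_x$. Partitioning these $x$'s by dyadic scales $R_x \in [2^k, 2^{k+1})$, the disjointness of the discs $\{B(x, R_x/2)\}_{x \in \cC}$ together with the containment $B(x, R_x/2) \subset B(y, 3\delta^{-1}\cdot 2^k)$ yields at most $O(\delta^{-2})$ supported $x$'s per scale. Summing over the $O(\log(1/\delta))$ relevant scales gives at most $O(\delta^{-2}\log(1/\delta))\cdot|\cC|$ pairs, and combining with the lower bound yields the claim with $A$ of order $1/c$.

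\textbf{Main obstacle.} The delicate points are twofold: extracting a witness set with $|Y_x|\gtrsim s$ uniformly in $x$ (the naive pigeonhole estimate $|Y_x| \geq N/(N - s - 1)$, where $N := |\cC \cap B(x,\delta^{-1}R_x)|$, degrades as $N$ grows well beyond $s$, so either a case split on $N$ or a refined choice of $Y_x$ keyed to points with isolation radius comparable to $\delta R_x$ is needed), and restricting the summation over dyadic scales of $R_x$ to an $O(\log(1/\delta))$ window (the supportedness hypothesis is local to $x$ and gives no a-priori control of $R_x$ in terms of $R_y$ for $y \in Y_x$, so the logarithmic cutoff must be teased out of the geometry of the maximal separated set). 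The quantitative multi-scale bookkeeping needed to resolve both issues is the heart of the \cite{GN12} argument and is sufficiently intricate that the present paper imports it as a black box.
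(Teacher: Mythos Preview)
The paper does not prove this lemma at all: it is quoted as \cite[Lemma~3.4]{GN12} and used as a black box, exactly as you yourself note in your final sentence. There is therefore no ``paper's own proof'' to compare against; your proposal is a sketch of how the cited result might be established, not a parallel to anything in the present paper.

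As a side remark on your sketch itself: the overall double-counting strategy is in the spirit of \cite{BeSc,GN12}, but the actual arguments there proceed via random dyadic grids (a random scale in a window of width $\log(1/\delta)$, plus a random translation), bounding the expected number of grid squares that are ``crowded'' relative to their scale. That randomized-grid framing produces the $\log(1/\delta)$ factor directly from the number of scales and handles the separation issues uniformly, so it bypasses both obstacles you identify --- the non-uniform extraction of $Y_x$ with $|Y_x|\gtrsim s$ and the restriction of the dyadic sum to $O(\log(1/\delta))$ scales. Your direct approach could probably be pushed through with the refinements you mention, but the grid argument is what \cite{GN12} actually does and is cleaner for exactly the reasons you flag.
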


In previous work, this lemma was applied to the set of centres of a circle
packing of a given graph.  A key difference from previous work, is that we
take the set $\cC$ to be the set of centres of the circles corresponding to
skeleton vertices, and not all vertices.  Let $P_n$ be some (arbitrarily
chosen) circle packing of $M'_n$ in $\R^2$ (which exists in light of the
Circle Packing Theorem \cite{K36}).  Since $M_n'$ is a bounded degree
triangulation (with boundary), we may take $P_n$ so that ratios of radii of
adjacent circles are bounded.

Having fixed some circle packing for $M'_n$,  we now consider the uniform
skeleton root $\rho'_n$.  Apply a translation and dilation to $P_n$ so
that the circle corresponding to the root $\rho'_n$ is the unit disc, and
let $Q$ be the image of $\cC$ after this transformation, which is now
defined on the same probability space as $H$, $M_n$ and $M'_n$.

\begin{lemma}\label{lem:skel_cp}
  Let $E_r$ be the event that all but $r^3$ points of $Q\cap\{|z|<r\}$ can be
  covered by a disc of radius $r^{-1}$.  There exists some $A$, such that
  for all $r \ge 2, n \ge 1$ we have $\P(E_r) > 1-\frac{A\log r}{r}$.
\end{lemma}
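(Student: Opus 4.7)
The plan is to apply \cref{lem:delta_s} to the point set $\cC$ (the centers of circles in $P_n$ corresponding to skeleton vertices of $M_n'$), using that under $\nu_n$ the root $\rho_n'$ is uniform in $\cC$, so after the translation/dilation sending $\rho_n'$'s circle to the unit disc, the root is effectively a uniform sample from $Q$.

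\textbf{Step 1: Deterministic bounds on $R_\rho$.} First I would show that the radius of isolation $R_\rho$ of the root with respect to $Q$ is bounded, say $R_\rho \in [1, C']$ for a universal constant $C'$. The lower bound $R_\rho \geq 1$ is automatic: the root's circle is the unit disc and any other circle is disjoint from its interior, so its centre is at distance at least $1$. For the upper bound, the root has a graph neighbour $v$ in $M_n'$ that is itself a skeleton vertex of $M_n'$; this is guaranteed by the star-tree transform, since every vertex inside the tree $T_{\pi(\rho)}$ is labelled skeleton, and the root has at least one neighbour in that tree. Because $M_n'$ is a bounded-degree triangulation (degree at most $3$ after the star-tree construction, and we triangulated faces while keeping degrees bounded), the Ring Lemma bounds the radius of the circle at $v$ by a universal constant, giving $R_\rho \leq 1 + C =: C'$.

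\textbf{Step 2: Choice of parameters.} I would choose $\delta = 1/(rC')$ and $s = r^3$. Then
\[
\delta^{-1} R_\rho \;\geq\; \delta^{-1} \;=\; rC' \;\geq\; r, \qquad \delta R_\rho \;\leq\; \delta C' \;=\; 1/r.
\]
Consequently, if $\rho$ is $(\delta,s)$-unsupported in $\cC$, then all but $s = r^3$ points of $Q \cap B(0, \delta^{-1} R_\rho) \supseteq Q \cap \{|z| < r\}$ are contained in a single ball of radius $\delta R_\rho \leq r^{-1}$; this is exactly the event $E_r$.

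\textbf{Step 3: Applying \cref{lem:delta_s}.} Because $\rho$ is uniform in $\cC$ under $\nu_n$, the probability that $\rho$ is $(\delta,s)$-supported equals the expected fraction of $(\delta,s)$-supported points in $\cC$. By \cref{lem:delta_s} this fraction is deterministically at most
\[
\frac{A \log(\delta^{-1})}{\delta^2 s} \;=\; \frac{A (C')^2 \log(rC')}{r}.
\]
Absorbing constants yields $\P(E_r) \geq 1 - A' \log r / r$ for an appropriate $A'$.

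\textbf{Main obstacle.} The only genuinely delicate step is the uniform upper bound $R_\rho \leq C'$, because the Ring Lemma gives bounded ratios of adjacent radii only at non-boundary vertices of the finite packing $P_n$. One must verify that the skeleton neighbour $v$ identified in Step 1 is (with probability tending to $1$ uniformly in $n$, or with a suitably modified packing) an interior vertex of $M_n'$; for small $n$ the lemma is essentially vacuous, since if $|Q \cap B(0,r)| \le r^3$ then $E_r$ holds trivially by covering zero points. This boundary issue aside, the rest of the argument is a clean reduction to the Benjamini--Schramm style geometric input of \cref{lem:delta_s}.
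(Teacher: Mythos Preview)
Your proof is correct and follows the same approach as the paper: bound the isolation radius $R_\rho \in [1,C]$ via disjointness and the Ring Lemma, then apply \cref{lem:delta_s} with $s=r^3$ and $\delta = 1/(Cr)$. Your treatment is in fact slightly more careful than the paper's own (the paper attributes both bounds on $R_\rho$ to the Ring Lemma and does not explicitly flag the boundary issue you identify).
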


\begin{proof}
  an arbitrary $M'_n$, and take $\cC$ to be the set of the centres of
  circles of skeleton points in $M'_n$.  For a uniform vertex $v$, scale so
  that the circle of $v$ is the unit circle.  By the Ring Lemma, the radius
  of isolation $R_v$ is in $[1,C]$ for some absolute constant $C$.
  
  Now apply \cref{lem:delta_s} with $s=r^3$ and $\delta = 1/(Cr)$.  We find
  that if $v$ is uniform in $\cC$, with the claimed high probability, all
  but $r^3$ points in $\cC\cap \{|z|\leq Cr R_v\}$ can be covered by a disc
  of radius $R_v/Cr$. Since $Cr R_v \geq r$ and $R_v/Cr \leq 1/r$, this
  implies the claim.
\end{proof}

Consider the subgraph of $M'_n$ induces by vertices in $\{|z|<r\}$.  Let
$\Gamma(n,r)$ denote the connected cluster of $\rho'_n$ in this graph, and
let $\bar\Gamma(n,r)$ denote $\Gamma(n,r)$ together with all edges
connecting the cluster to vertices outside $\{|z|>r\}$.  A major step in
our proof is to show that for some constant $\alpha$, the resistance in $\bar\Gamma(n,r)$ from
$\rho'_n$ to the complement of $\{|z|>r\}$ is at least 
$\alpha$ with high probability.  Of course, this is the same as the
resistance in $M'_n$ between the same vertex sets.  Moreover, we shall
prove all this not just for the resistance from $\rho'_n$, but from any
finite neighbourhood of $\rho'_n$, i.e.\ there is some $\alpha>0$ so that
for any finite set the resistance from the set to the complement of
$\{|z|>r\}$ is at least $\alpha$ for $r$ large enough.
Towards this, we first prove that (with
high probability) the maximal conductance of any edge in $\bar\Gamma(n,r)$
is at most $C\log r$, and that if all conductances are changed to $1$ then
the resistance between the involved vertex sets are at least $c\log
r$. (Recall that the conductance of an edge is the degree of the vertex
corresponding to it before the star-tree transform.)
The claim then follows by Rayleigh monotonicity with $\alpha=c/C$.  In what
follows, $\Res(A,B; w)$ denotes the resistance from $A$ to $B$ with edge
weights $w$.  The graph is implicit and should be clear from the context.

\begin{lemma}\label{L:high_resistance}
  Fix $k$, and let $B_k\subset M'_n$ be the ball of graph radius $k$ around
  $\rho'_n$.  For some $c$, for all $r$ large enough, in $\bar\Gamma(n,r)$
  we have 
  \[
  \Res(B_k,\{|z|\geq r\}; 1) \geq c_1\log r.
  \]
\end{lemma}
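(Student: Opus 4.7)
The plan is to establish the lower bound via Dirichlet's variational principle: exhibit an explicit test function $\phi$ on the vertices of $\bar\Gamma(n,r)$ with $\phi\equiv 1$ on $B_k$ and $\phi\equiv 0$ on the outer boundary $\{|z|\ge r\}$, and bound its unit-weight Dirichlet energy $\mathcal E(\phi) := \sum_{u\sim v}(\phi(u)-\phi(v))^2$ (where the sum is over edges of $\bar\Gamma(n,r)$) by $O(1/\log r)$. Then $\Res(B_k,\{|z|\ge r\};1) \ge 1/\mathcal E(\phi) \ge c_1\log r$. Since $M'_n$ is a bounded-degree triangulation (from the star--tree transform followed by triangulation), the Ring Lemma applies to $P_n$ and gives bounded ratios of radii of adjacent circles, which is the only geometric input we need.

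Set $L_k = \max_{v\in B_k}|z_v|$. By the Ring Lemma and the fact that $|B_k|\le D^k$ (with $D$ the degree bound), $L_k\le C^k$ for an absolute constant $C$; in particular $L_k$ is bounded uniformly in $n,r$. Define $\phi$ to equal $1$ on $\{|z|\le L_k\}$, equal $0$ on $\{|z|\ge r\}$, and to interpolate logarithmically in between:
\[
\phi(v) = \frac{\log r - \log|z_v|}{\log r - \log L_k}, \qquad L_k \le |z_v| \le r.
\]
For an edge $(u,v)$ of $\bar\Gamma(n,r)$ entirely in the transition region, tangency gives $|z_u - z_v| = r_u + r_v$, and combining with $r_u\asymp r_v$ yields, by an elementary expansion of $\log$,
\[
|\phi(u)-\phi(v)| \le \frac{C(r_u+r_v)}{\log r \cdot \min(|z_u|,|z_v|)}.
\]
Grouping per vertex and using bounded degree,
\[
\mathcal E(\phi) \le \frac{C'}{(\log r)^2}\sum_{v\,:\,L_k\le|z_v|\le r}\frac{r_v^2}{|z_v|^2} + (\textup{boundary}).
\]
The ``boundary'' here collects edges incident to $B_k$ and edges joining the transition region to the sink $\{|z|\ge r\}$. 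Edges touching $B_k$ contribute only $O(1/(\log r)^2)$ in aggregate because $B_k$ is finite of bounded degree. For a crossing edge $(u,v)$ with $|z_v|\ge r > |z_u|$, tangency forces $|z_u|\ge r - r_u - r_v$, so $\phi(u) = O(r_u/(r\log r))$; summing $\phi(u)^2$ over such $u$ and using disjointness of the crossing circles (which gives $\sum r_u^2 \le Cr^2$) also yields $O(1/(\log r)^2)$, which is absorbed.

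Finally, decompose $\{L_k\le|z|\le r\}$ into the $O(\log r)$ dyadic sub-annuli $A_j = \{2^jL_k\le|z|\le 2^{j+1}L_k\}$. Every circle with centre in $A_j$ is disjoint from the root unit disc, hence has radius at most $|z_v|-1\le 2^{j+1}L_k$, so it lies inside $\{1\le|z|\le 2^{j+2}L_k\}$; mutual disjointness of the packing then gives $\sum_{v\in A_j}r_v^2 \le C\,4^j L_k^2$, so each sub-annulus contributes $O(1)$ to $\sum r_v^2/|z_v|^2$. Summing over the $O(\log r)$ sub-annuli yields the bound $\mathcal E(\phi)=O(1/\log r)$, and the lemma follows. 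The argument is entirely geometric and deterministic and makes no use of the probabilistic event $E_r$ from Lemma \ref{lem:skel_cp} (which is needed only for the companion bound on maximal conductances in $\bar\Gamma(n,r)$). The main technical point is the careful handling of the boundary terms sketched above; once that is in place, the proof reduces to the standard dyadic area estimate for disjoint circles.
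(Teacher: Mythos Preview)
Your argument is correct and is essentially the same approach as the paper's. The paper's proof is two lines: it uses the Ring Lemma to confine $B_k$ to a disc $\{|z|<r'\}$ with $r'=r'(k)$ fixed, and then simply cites the standard annulus resistance estimate for bounded-degree circle packings (\cite{HS93,BeSc}, or \cite[Corollary~3.3]{GN12}) to get $\Res \ge c\log(r/r')$. What you have done is spell out that cited estimate in full, via the explicit logarithmic test function and the dyadic area bound $\sum_{v\in A_j} r_v^2 \le C\,4^j L_k^2$; this is precisely the computation underlying the references. Your observation that the argument is deterministic and does not use the event $E_r$ is also in line with the paper, which reserves $E_r$ for \cref{L:max_conductance}.
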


\begin{proof}
  The radius of the circle of $\rho'_n$ is $1$.  By the Ring Lemma, radii
  of adjacent circles have bounded ratio, so every vertex of $B_k$ is
  contained in $\{|z|<r'\}$ for some $r'=r'(k)$.  The resistance across the
  annulus $\{r'<|z|<r\}$ is now seen to be at least $c\log(r/r')$ by the
  arguments of \cite{HS93,BeSc} (see for example \cite[Corollary~3.3]{GN12}).
\end{proof}

\begin{lemma}\label{L:max_conductance}
  Let $w_{\max}$ denote the maximal conductance of any edge in
  $\bar\Gamma(n,r)$.  Then for some $c_0$ we have
  \[
  \lim_{r\to\infty} \limsup_{n\to\infty} \P(w_{\max} \geq c_0 \log r) = 0.
  \]
\end{lemma}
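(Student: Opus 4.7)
Every edge of $\bar\Gamma(n,r)$ lies in a unique tree $T_v$ of the star-tree transform of $M_n$ and carries conductance $\deg_{M_n}(v)$. Writing
\[
V^* := \{v \in M_n : T_v \cap V(\bar\Gamma(n,r)) \neq \emptyset\},
\]
we have $w_{\max} = \max_{v \in V^*}\deg_{M_n}(v)$. The plan is to show that, with probability tending to one, $V^*$ is contained in a skeleton ball $B_{sk}(R)$ for some $R = r^{O(1)}$; Lemma~\ref{lem:DmaxM} applied at scale $R$ then gives exactly the required $O(\log r)$ bound on $w_{\max}$.

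First I would apply Lemma~\ref{lem:skel_cp}: with probability $1-O((\log r)/r)$ there is a disc $D \subset \R^2$ of radius $1/r$ with $|(Q \cap \{|z|<r\}) \setminus D| \le r^3$; denote this exceptional set by $E$. Because $\rho'_n$ has unit circle at the origin and circles are disjoint, no other skeleton circle can have centre within distance one of the origin, so $D$ contains at most one point of $Q$ (and generically $\rho'_n \in E$). Split $V^* = V^*_E \sqcup V^*_D$ according to whether $T_v$ meets $E$ or has its $\{|z|<r\}$-intersection confined to $D$. Since each point of $E$ belongs to at most two trees, $|V^*_E| \le 2r^3$. For $v \in V^*_D$, any graph path in $\bar\Gamma(n,r)$ from $\rho'_n$ to $T_v$ must cross from outside $D$ into $D$ while staying inside $\{|z|<r\}$, and the crossing vertex lies in $E$; this forces $v$ to be $M_n$-adjacent via a shared leaf to some $v' \in V^*_E$. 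Hence it suffices to control $d_\Sk(\rho,v')$ for $v' \in V^*_E$, together with its $M_n$-neighbours.

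The central step is the bound $d_\Sk(\rho,v') \le r^{O(1)}$ for $v' \in V^*_E$. Pick $u \in T_{v'}$ with $|z_u|<r$. Using the Ring Lemma in the bounded-degree triangulation $M'_n$ together with a He--Schramm type area argument (analogous to that of \cite{HS93,BeSc} and the one used in Lemma~\ref{L:high_resistance}), bound the $M'_n$-graph distance from $\rho'_n$ to $u$ by $r^{O(1)}$. Projecting this path via Lemma~\ref{L:topology} yields $d_{M_n}(\rho_n,v') \le r^{O(1)}$, and using the block structure (Proposition~\ref{prop:L0}, with its exponential control on block boundary sizes) gives $d_\Sk(\rho,v') \le r^{O(1)}$ as well. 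A short bootstrap, using that the $M_n$-neighbourhood of $V^*_E$ has size at most $|V^*_E|$ times the maximum degree under consideration, then places all of $V^*$ inside $B_{sk}(r^{O(1)})$, and Lemma~\ref{lem:DmaxM} completes the proof.

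The principal obstacle is upgrading the Ring Lemma (which controls only ratios of adjacent radii) to an upper bound on graph distance in $M'_n$ from the root to a given circle whose centre is within Euclidean distance $r$: it requires combining the ratio bound with an Euclidean area argument that exploits the unit-radius root circle and the bounded-degree condition. The subsequent transfers from $M'_n$-distance to $M_n$-distance (via Lemma~\ref{L:topology}) and from $M_n$-distance to $\Sk$-distance (via the block decomposition) are less delicate, but must be executed with enough care to keep all losses polynomial in $r$.
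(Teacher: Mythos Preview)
Your overall strategy---place $V^*$ inside a skeleton ball of radius $r^{O(1)}$ and then invoke \cref{lem:DmaxM}---matches the paper's, but the route you take has real gaps. First, the claim that $D$ contains at most one point of $Q$ is false: the disc $D$ furnished by \cref{lem:skel_cp} may sit anywhere in $\{|z|<r\}$, and away from the unit root circle there is no lower bound on skeleton radii, so $D$ can contain arbitrarily many skeleton centres. Second, and more seriously, the ``central step'' of bounding the $M'_n$-graph distance from $\rho'_n$ to a vertex with centre in $\{|z|<r\}$ by $r^{O(1)}$ is not available. The Ring Lemma controls only ratios of adjacent radii; it does not prevent circles inside $\{|z|<r\}$ from being arbitrarily small and hence at arbitrarily large graph distance from the root (think of a bounded-degree hyperbolic triangulation packed in a disc). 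The He--Schramm argument you cite yields resistance lower bounds across annuli, not combinatorial distance upper bounds, so it does not close this gap. The subsequent passage from $M_n$-distance to $\Sk$-distance via the block structure is also not innocent, since Boltzmann holes have heavy-tailed volume.

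The paper avoids all of this by never estimating $M'_n$- or $M_n$-graph distance. The key observation is that any path in $\Gamma(n,r)$, projected by $\pi$ and then restricted to its skeleton vertices, is already a path in $\Sk(M_n)$ (via \cref{L:topology}), and it visits no more skeleton vertices than are present in the relevant Euclidean region. Thus the $\Sk$-distance from $\rho'_n$ to anything in $\Gamma(n,r)$ is bounded \emph{directly} by the count of skeleton centres in that region---precisely the $O(r^3)$ supplied by \cref{lem:skel_cp}. To handle the exceptional disc $U$ (your $D$), the paper applies \cref{lem:skel_cp} at radius $2r$ and uses the Ring Lemma together with the fact that $M'_n$ is a triangulation to find a cycle in a thin annulus around $U$, lying in $\{|z|<2r\}\setminus U$; this cycle separates $\rho'_n$ from $U$, so any path into $U$ still passes through the controlled set of $\le 8r^3$ skeleton vertices. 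This yields $\Gamma(n,r)\subset$ hull of $B_\Sk(\rho'_n,8r^3)$ with no intermediate metric comparisons, and \cref{lem:DmaxM} finishes.
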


\begin{proof}
  Fix $\eps>0$, and consider the event $E_r$ of \cref{lem:skel_cp}.  For
  $r\geq r_0(\eps)$ we have $\P(E_r) \geq 1-\eps$.  Assume $r\geq 1$ and
  that $E_{2r}$ holds, and let $U=\{|z-z_0|<1/2r\}$ be a disc such that
  $\{|z|<2r\}\setminus U$ contains at most $(2r)^3$ skeleton vertices.

  We consider several possibilities according to the location of $U$.  If
  $U$ is disjoint of $\{|z|<r\}$, then $\{|z|<r\}$ contains at most $8r^3$
  skeleton vertices.  Otherwise, $|z_0|<r+1/2$ (since $r\geq 1$).  Suppose
  $U$ contains at least $2$ vertices, which therefore have circles of
  radius at most $1/r$.  Let $U_a = \{|z-z_0| < a/r\}$.  From the Ring
  Lemma it follows that for some $a$, the vertices in the annulus
  $U_a\setminus U$ disconnect $U$ from the complement of $U_a$.  In that
  case, since $M'_n$ is a triangulation, there is a cycle in that annulus
  that surrounds $U$.  For $r$ large enough, this cycle lies in $\{|z|<2r\}
  \setminus U$, and so it contains at most $8r^3$ skeleton vertices (and
  possibly more non-skeleton vertices).

  Let us summarize our findings so far.  For $r$ large enough and any $n$,
  with probability at least $1-\eps$, there is a set of at most $8r^3$
  skeleton vertices in $M'_n$ that contains every skeleton vertex in
  $\{|z|<r\}$ except possibly those in $U$.  If any vertices from $U$ are
  missed, the set also contains all skeleton vertices from a cycle
  separating $U$ from $\rho'_n$.  That cycle need not be contained in
  $\{|z|<r\}$. See \cref{F:cpproof}.

  \begin{figure}
    \centering
    \includegraphics[width = 0.3\textwidth]{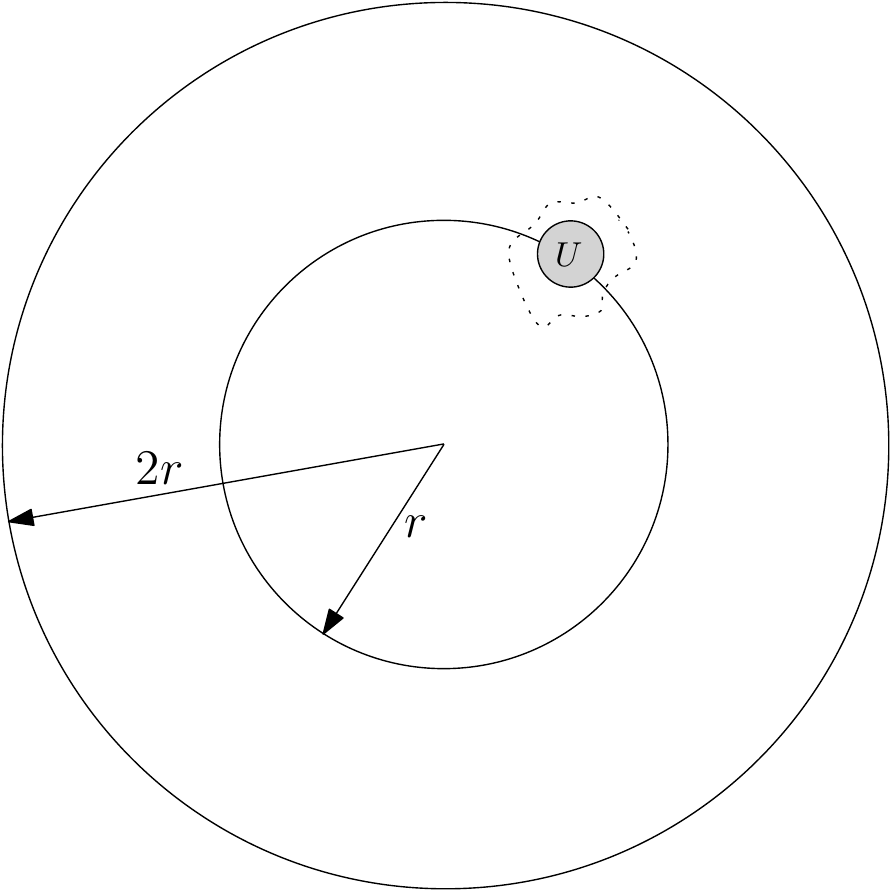}
    \caption{Illustration of the proof of \cref{L:max_conductance}.  The
      grey disc $U$ may include an arbitrary number of skeleton vertices,
      but the rest of the large ball, including the cycle around $U$
      contain at most $8r^3$ skeleton vertices.  The cycle is needed if $U$
      intersects but is not surrounded by the ball of radius $r$.}
    \label{F:cpproof}
  \end{figure}
  
  Now, any path $\gamma$ in $M'_n$ which does not contain any boundary
  vertex of $M_n'$ projects via $\pi$ to a path in $M_n$.  The restriction
  of $\pi(\gamma)$ to the skeleton vertices is a path in $\Sk(M_n)$, which
  visits no more skeleton vertices than $\gamma$ (by \cref{L:topology}).
  For any $r$, for large enough $n$ with probability $1-\eps$, no boundary
  vertex of $M_n'$ is in $\bar \Gamma(n,r)$ since otherwise, the skeleton
  distance from $\rho'_n$ to a boundary vertex is at most $8r^3$
  (\cref{lem:root_inside}).  Thus for any $r$, for large enough $n$, with
  probability $1-2\eps$, $\Gamma(n,r)$ is contained in the hull of
  $B_\Sk(\rho'_n,8r^3)$.  The result now follows from \cref{lem:DmaxM}.
\end{proof}

As noted, by combining \cref{L:high_resistance,L:max_conductance} we get
the following with $\alpha=c_1/c_0$.

\begin{prop}\label{P:const_res}
  Fix an integer $k$ and $\eps>0$, and let $B_k\subset M'_n$ be the
  ball of graph distance $k$ around
  $\rho'_n$.  For some $\alpha>0$, for all $r$ large enough, we have with
  probability at least $1-\eps $ as $n\to\infty$
  \[
  \Res(B_k,\{|z|\geq r\}; w) \geq \alpha.
  \]
\end{prop}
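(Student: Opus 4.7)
The plan is to combine \cref{L:high_resistance} and \cref{L:max_conductance} via Rayleigh monotonicity, which is exactly what is announced right before the statement. The key observation is that the weighted and unweighted resistances differ by at most a factor of $w_{\max}$, so a polylogarithmic lower bound on the unweighted resistance combined with a polylogarithmic upper bound on the weights will give a constant lower bound on the weighted resistance.

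More concretely, I would first fix $\eps > 0$ and $k$. By \cref{L:max_conductance}, for $r$ large enough and $n$ large enough, with probability at least $1-\eps/2$ the maximal conductance $w_{\max}$ of any edge in $\bar\Gamma(n,r)$ is at most $c_0\log r$. On this event, for every edge $e$ of $\bar\Gamma(n,r)$ the conductance satisfies $w_e \le c_0 \log r$, equivalently $w_e^{-1} \ge (c_0 \log r)^{-1}$. By the Rayleigh monotonicity principle (increasing edge resistances can only increase effective resistance between any two vertex sets) we obtain
\[
\Res(B_k, \{|z|\ge r\}; w) \;\ge\; \frac{1}{c_0 \log r}\,\Res(B_k, \{|z|\ge r\}; 1).
\]

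Next, by \cref{L:high_resistance}, for $r$ large enough we have $\Res(B_k, \{|z|\ge r\}; 1) \ge c_1 \log r$ deterministically (once the bounded-radius-ratio statement from the Ring Lemma is applied to show $B_k$ lies in some fixed $\{|z|<r'(k)\}$, which is where the dependence on $k$ enters). Combining the two bounds on the intersection of both high-probability events, which has probability at least $1-\eps$ for $n$ and $r$ large enough, yields
\[
\Res(B_k, \{|z|\ge r\}; w) \;\ge\; \frac{c_1 \log r}{c_0 \log r} \;=\; \frac{c_1}{c_0},
\]
so the conclusion holds with $\alpha = c_1/c_0$.

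There is no real obstacle at this stage: both ingredients are already in hand, and the argument is just the Rayleigh monotonicity sandwich. The only minor point to be careful about is that the ball $B_k$ (a graph-metric ball in $M'_n$) must be placed inside a Euclidean disc of bounded radius before \cref{L:high_resistance} applies; this is automatic from the Ring Lemma since $\rho'_n$ corresponds to the unit circle and $B_k$ has graph radius $k$, so adjacent circle radii stay within a bounded multiplicative factor over at most $k$ steps.
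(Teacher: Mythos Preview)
Your proposal is correct and follows exactly the approach the paper takes: the paper's entire proof is the single line ``As noted, by combining \cref{L:high_resistance,L:max_conductance} we get the following with $\alpha=c_1/c_0$,'' and you have simply (and correctly) spelled out the Rayleigh monotonicity step that was announced in the paragraph preceding the proposition. One cosmetic remark: since \cref{L:high_resistance} is deterministic once $r$ is large, only the conductance bound is probabilistic, so you can take its probability to be $1-\eps$ rather than $1-\eps/2$; this changes nothing.
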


\begin{proof}[Proof of \cref{thm:main,thm:M_rec}]
  The argument is similar to the argument of \cite{GN12}.  We start with
  the observation that an electrical network $G$ is recurrent if and only
  if for some $\alpha>0$, for every graph distance ball $B_k=B_k(\rho)$
  there exists a finite vertex set $S$ such that
  \[
  \Res(B_k, G \setminus S; w) > \alpha.
  \] 
  Fix $k$, and $\eps>0$.  By \cref{P:const_res}, for any large enough $n$,
  with probability $1-\eps$ there is some finite $S$ such that in $M'_n$ we
  have $\Res(B_k, M'_n \setminus S; w) > \alpha$.  Moreover, with high
  probability for some $R$, the set $S$ is contained in the hull of a ball
  of radius $R$ 
  in $M'_n$.  Going to the limit, we find that for $n$ large enough, with
  probability at least $1-2\eps$ the resistance in $M'$ from $B_k$ to the
  complement of some large finite set $S$ is at least $\alpha$.  Since $\eps$ is
  arbitrary, this implies that $M'$ is $\nu$-almost surely recurrent.

  By \cref{lem:ac}, this implies that $M'$ is $\mu$-almost surely
  recurrent, which in turn also implies recurrence of $M$, and of $H$.
\end{proof}

\section{Extensions}\label{sec:extens-open-quest}

\paragraph{Resistance estimates.}
From the argument above we also get some explicit estimates on the growth
of the resistance in $M$.  In the annulus between Euclidean radii $2^n$ and
$2^{n+1}$ the maximal degree is of order $n$. Since the resistance across
the annulus without weight is at least $C$, this indicates that the
resistance to distance $2^n$ is at least $c\log n$, i.e.\ the resistance to
Euclidean distance $R$ is $\log\log R$.  This argument can be made precise,
but we do not pursue this here.  It would be interesting to get better
bounds on the growth of the resistance (it is believed to grow like
$\log$).

\paragraph{Other classes of maps.}
One natural generalization is to consider uniform infinite
domain Markov half plane triangulations with self-loops. Such
triangulations can be obtained by taking a HUIPT and decomposing every edge into i.i.d.\
Geometric number of edges and attaching self-loops on one of the
vertices in the $2$-gons thus formed by tossing a fair coin  (see
\cite{AR13} for detailed discussion on this.) Note that a self-loops with
any finite triangulation inside it do not effect recurrence
or transience so we can delete them. We can now form an equivalent
network by collapsing the geometric number of multiple edges into a
single edge and giving this edge a conductance which is equal to the
number of edges combined to form it.  Thus the equivalent network is
HUIPT but with i.i.d.\ geometric conductances on each edge.  It can be
checked by the diligent reader that our analysis of the HUIPT goes
through in this case also, implying recurrence of this case as well.

A more difficult problem is proving recurrence of more general half planar
maps.  It is easy to see that a layer decomposition is still possible for
various other classes of half plane uniform infinite maps. For
quadrangulations, a similar layer decomposition was introduced by Krikun in
\cite{krikun2005local}.  The main estimate needed is that the maximal
degree in the skeleton balls grow logarithmically in the radius (an
analogue of \cref{lem:DmaxM}.) For maps with even larger faces, a layer
decomposition is still possible but it becomes more complicated.

\paragraph{Hyperbolic maps.}
A one parameter family of hyperbolic versions of the half plane
UIPT were constructed in \cite{AR13}. A full plane hyperbolic version
was constructed in \cite{PSHIT} and it was shown in \cite{ANR14} that
the half plane versions can be be realized as a sub-map of the full plane
ones. One can carry out the layer decomposition and a full plane
extension of the half plane maps in
exactly the same way as done in this paper. Call such a full plane map
$M^{\text{hyp}}$. The volume of the triangulation inside the holes in this
situation will have exponential tail. It is not too difficult to
see that the lower half of the triangulation in  $M^{\text{hyp}} $ is
recurrent.  In this situation, if we look at the sequence of
hulls of radius $r$ and uniformly pick a vertex, the map converges
locally to some rerooted version of the lower half, and so the maps are a
local limit of finite planar graphs with exponential degree distribution.
Exploring the connection between $M^{\text{hyp}}$ and the full plane map
defined in \cite{PSHIT} is also of interest.

\paragraph{Stationarity.}
It is easy to see that if we put appropriate conductances on the edges of
$\Sk(M)$ and bias by the degree (in $ M$) of the root vertex, we obtain a
stationary reversible graph. A similar construction can be carried out for
the hyperbolic versions to obtain $\Sk(M^{\text{hyp}})$. For a simple
random walk $Y_0,Y_1,\ldots$ in $\Sk(M^{\text{hyp}})$ or $\Sk(M)$, if we
let $\ell(Y_i)$ denote the index of the layer below $Y_i$ an application of
ergodic theorem lets us conclude
\begin{equation}
  \label{eq:2}
  \frac{\ell(Y_i)}{i} \to s
\end{equation}
almost surely for some constant $s$. It follows from the results in
\cite{ANR14} that $s>0$ almost surely in $\Sk(M^{\text{hyp}})$ and the
recurrence result in this paper shows $s=0$ almost surely for
$\Sk(M)$. Notice that simple random walk in $M^{\text{hyp}}$ spends a
positive fraction of its time in the skeleton vertices (this is easy
to see again via stationarity and exponential tail of the volume of
the holes). From all this we can deduce the existence of the speed of
simple random walk away from the boundary in $H$. This answers a
question in \cite{ANR14} where only positive liminf speed from the
boundary was established.

\subsection*{Acknowledgments}

This work begun while GR was at UBC, and completed during a visit of OA to
the Isaac Newton Institute.  OA was partly supported by NSERC, the Isaac
Newton Institute and the Simons Foundation. GR was supported by the Engineering and Physical Sciences Research Council under grant EP/103372X/1.


\bibliographystyle{abbrv}
\bibliography{HUIPTrec_arxiv}

\end{document}